\newtheorem{hyp}{Hypothesis}
\newtheorem{Theorem}{Theorem}[part]
\newtheorem{Lemma}{Lemma}[part]
\newtheorem{Rem}{Remark}
\newtheorem{Def}{Definition}
\newtheorem{Prop}{Proposition}[part]
\title{Fokker-Planck equations of jumping particles and mean field games of impulse control}
\author{Charles Bertucci}
\address{Université Paris-Dauphine, PSL Research University,UMR 7534, CEREMADE, 75016 Paris, France}
\date{} 
\begin{document}
\begin{abstract}
This paper is interested in the description of the density of particles evolving according to some optimal policy of an impulse control problem. We first fix sets on which the particles jump and explain how we can characterize such a density. We then investigate the coupled case in which the underlying impulse control problem depends on the density we are looking for : the mean field game of impulse control. In both cases, we give a variational characterization of the densities of jumping particles.
\end{abstract}
\maketitle
\tableofcontents
\section{Introduction}
\subsection{General introduction}
This paper is the second of a series devoted to the systematic study of mean field games (MFG for short) with optimal stopping or impulse control. In \citep{bertucci2017optimal} we developed an obstacle problem approach to solve a forward-backward system which models MFG with optimal stopping (without common noise). We here develop the same point of view for MFG with impulse control. As the definition of a Fokker-Planck equation associated with a density of players playing an impulse control problem is a difficult question in itself, it is the subject of the first part of this article. This part is independent from the MFG theory. The case of the master equation (i.e. when there is a common noise) will be treated in a subsequent work.

MFG model situations in which a continuum of indistinguishable players are playing a differential game. The evolution of the density of players is induced by the optimal choices the players make. In many situations, the costs involved in the game depend only on the density of players. Denoting by $u$ the value function of a generic player and by $m$ the density of players, a classical forward-backward MFG system during the time interval $(0,T)$ is

\[
\begin{cases}
-\partial_t u - \nu \Delta u + H(x,\nabla u) = f(m) ;\\
\partial_t m -  \nu \Delta m - div(D_p H(x, \nabla u) m) = 0 ;\\
m(0) = m_0 ; u(T) = g(m(T)).
\end{cases}
\]
where $H(x,p)$ is the hamiltonian of a continuous optimal control problem, $m_0$ is the initial condition for the density of players, $g$ and $f$ are respectively the terminal and running costs and $\nu > 0$ characterizes the intensity of the individual noises. A solution $(u,m)$ of this system corresponds to a Nash equilibrium for the game with an infinite number of players. This system, as well as MFG, have been introduced in \citep{lasry2006jeux, lasry2006jeux2,lasry2007mean} by J.-M. Lasry and P.-L. Lions. In these papers they proved general conditions under which the existence and uniqueness hold for this problem. Such games have been introduced independently in \citep{huang2006large}. We also refer to \citep{lions2007cours, cardaliaguet2010notes} for more results on this system. MFG models have a wide range of applications, see \citep{achdou2016long, gueant2009reference, gueant2011mean} for examples. Many interesting questions have also been raised around this system, we can cite for example long time average \citep{cardaliaguet2017long}, learning \citep{cardaliaguet2017learning}, the difficult problem of the convergence of the system of $N$ players as $N$ goes to infinity and the presence of a common noise \citep{cardaliaguet2015master}. Numerical methods are also being developed, let us cite \citep{achdou2010mean, bricen?o2018proximal} for instance. Let us also mention that a powerful probabilistic point of view on MFG has been developed, we refer to \citep{carmona2017probabilistic, carmona2013probabilistic, lacker2015mean} for more details on this point of view. In this paper, we generalize the results of the existence and uniqueness of the previous system to the case in which the players face an impulse control problem. Concerning closely related works, several optimal control problems, related to the impulse control problem, have been studied in a MFG setting. Optimal stopping or obstacle problems have been studied in \citep{bertucci2017optimal, gomes2014obstacle, nutz2016mean, carmona2017mean}, singular controls in \citep{fu2017mean, guo2017mean} and optimal switching in \citep{gomes2016weakly}.

\subsection{Impulse control problems}
Impulse control problems have been studied since the 70s. We refer to the work of A. Bensoussan and J.-L. Lions (see \citep{bensoussan1984impulse}) for a complete presentation of the problem. The terminology impulse control refers to an optimization problem in which the state is driven by a stochastic ordinary differential equation with jumps :

\begin{equation}\label{jumpprocess}
\begin{cases}
\forall t \in (\tau_i, \tau_{i+1}), dX_t = \sqrt{2 \nu}dW_t; \\
X_{\tau_i^+} = X_{\tau_i^-} + \xi_i ;
\end{cases}
\end{equation}
where $(W_t)_{t \geq 0}$ is a brownian motion under a standard probability space $(\Omega, \mathcal{A}, \mathbb{P})$. The jump $\xi_i$ occurs at time $\tau_i$ and is controlled by the player. The jumps are characterized by the random sequence of stopping times $(\tau_i)_{i \geq 0}$ and the random sequence of jumps $(\xi_i)_{i \geq 0}$. Those two sequences are chosen the controls and are adapted to the brownian motion $(W_t)_{t \geq 0}$ in the sense that the sequence $(\tau_i)_{i \geq 0}$ is indeed a sequence of stopping times for this brownian motion and that $(\xi_i, \tau_i)$ is measurable with respect to the $\sigma$-algebra generated by $(W_t)_{0 \leq t \leq \tau_i}$. We assume that $(\xi_i)_{i \geq 0}$ is valued in a set $K \subset \mathbb{T}^d$. Denoting by $f$ the running cost and by $k(x, \xi)$ the cost paid to use the jump $\xi$ while on the position $x$, we define the value function $u$ by
\[
u(t,x) := \inf_{(\tau_i)_i, (\xi_i)_i} \mathbb{E}[ \int_t^T f(s,X_s) ds + \sum_{ i =1 }^{\#(\tau_j)_j} k(X_{\tau_i^-}, \xi_i)];
\]
where the infimum is taken over all sequences $((\tau_i)_i , (\xi_i)_i)$ which are adapted to the brownian motion in the sense prescribed above and which satisfy the fact that $(\tau_i)_i$ is an increasing sequence and that $(\xi_i)_i$ is valued in $K$. The trajectory $(X_s)_{s\geq t}$ is given by (\ref{jumpprocess}) with the initial condition $X_t = x$. Under some assumptions on the costs( which are detailed in the appendix), the value function $u$ satisfies, in the sense of quasi variational inequality (QVI) (which is also detailed in the appendix), the Hamilton-Jacobi-Bellman equation :
\[
\begin{cases}
\max(-\partial_t u - \nu \Delta u -f, u(t,x) - \inf_{\xi \in K} \{k(x,\xi) + u(t,x + \xi)\}) = 0.; \\
u(T) = 0.
\end{cases}
\]
\subsection{The density of players} In a MFG context, the main question we are addressing is how will evolve an initial density of players, if those players are facing the same impulse control problem. Intuitively, the density of players $m$ has to satisfy (formally) at least some requirements :
\begin{itemize}
\item $\partial_t m -  \nu \Delta m = 0$ where it is optimal for the players to wait and not to jump and where no player is arriving.
\item $m = 0$ where it is strictly suboptimal not to jump.
\item The flux of arriving players at $x$ is equal to the sum over $\xi$ of the flows of players which choose to use the jump $\xi$ at $x-\xi$.
\end{itemize}
Let us note that, at least formally, we talk about parts of the space on which it is optimal to jump (i.e. to use a control to make the process $(X_t)_t$ jumps) because all the players being indistinguishable, if it is optimal for one player to jump, then it is optimal for all the players to jump.\\
\\
The problem of finding a density $m$ which satisfies the above requirements is not classical, mostly because there is no particular assumption on how the players use their controls.  We focus on the problem of modeling the evolution of a density of jumping players in the first part of this article. We build a dual characterization of the solution of the Fokker-Planck "equation". We fix a function $V(t,x,\xi)$ which describes wether or not the players use the jump $\xi$ at the position $x$ and time $t$. Then we construct a density of players $m$ which satisfies the required properties and thus solves a Fokker-Planck equation of jumping particles. The characterization of this density relies on the fact that we can interpret such Fokker-Planck equations as dual equations of QVI. The construction of such a solution uses a penalized version of the problem in which we can write properly the PDE satisfied by the density $m$. We then find a priori estimates which allow us to pass to the limit in this penalized version of the problem, while obtaining a characterization of the limit density. We also give results concerning the stationary case.\\
\\
In the second part of this article, we present results on the uniqueness and existence for the impulse control problem in MFG. We also recall that in view of the results of \citep{bertucci2017optimal} we expect the solutions of the MFG system to be mixed solutions, meaning that optimal strategies are random in general as the Nash equilibria of the MFG can be mixed equilibria. We end this second part by giving results on the stationary case and on the optimal control interpretation of such MFG.\\
\\
We present in two appendixes new results concerning QVI and the heat equation in a time dependent domain, which we need in this article and which are somehow independent of it.

\part{Fokker-Planck equation of jumping particles}
In this part we present the variational formulation of the Fokker-Planck equation satisfied by the density of jumping particles. We work in the $d$ dimensional torus (denoted by $\mathbb{T}^d$) in a time dependent setting, except for the last section which is concerned with the stationary case. The positive real number $T$ is the final time and $m_0 \in L^2(\mathbb{T^d})$ is the initial density at time $t = 0$. The aim of this part is to construct a suitable notion to characterize densities of jumping particles. By opposition to Fokker-Planck equations of jumping processes, we do not want to model populations of particles which are driven by Poisson processes or other jump processes of the sort. Namely if a unique jump $\xi$ is possible, we are interested in building solutions for $\epsilon > 0$ of
\begin{equation}\label{onejump}
\begin{cases}
\partial_t m_{\epsilon} -  \nu\Delta m_{\epsilon} + \frac{1}{\epsilon} \mathbb{1}_{A}m_{\epsilon} - \frac{1}{\epsilon} (\mathbb{1}_{A}m_{\epsilon})(t,x- \xi) = 0; \\
m(0) = m_0;
\end{cases}
\end{equation}
and passing to the limit $\epsilon \to 0$. The interpretation of (\ref{onejump}) is that the particles jump $\xi$ further if they are in the set $A$ with a probability given by an exponential law of parameter $\epsilon^{-1}$. The interpretation of the limit $m$ of solutions of (\ref{onejump}) is that it describes particles evolving only along brownian trajectories in $A^c$ and which jump $\xi$ further once they reach $A$. If $A$ is a smooth closed set such that the reaching time of $\partial A$ is well defined, then the trajectory $(X_s)_{s \geq 0}$ of a generic particle is defined by
\[
\begin{cases}
dX_s = \sqrt{2 \nu} dW_s,\forall i,  \forall s \in (t_i,t_{i+1}) ;\\
X_{t_i^+} = X_{t_i^-} + n(X_{t_i^-}) \xi ;
\end{cases}
\]
where $n(x)$ is the smallest integer $p$ such that $x + p \xi \notin A$ and where $t_{i+1}$ is the stopping time defined by the reaching time of $A$ by the process $(X_s)_{s\geq t_i}$. We recall that this interpretation is given in the case in which a unique jump is possible, a similar interpretation also exists in the case of a finite number of possible jumps.\\
\\
Finding solutions of the penalized equation (\ref{onejump}) does not require new techniques and is not a difficult question in itself. The majority of this part is concerned with building, uniform in $\epsilon$, a priori estimates (lemma \ref{estimate1}) to pass to the limit $ \epsilon \to 0$. Even though we use these uniform a priori estimates to prove proposition \ref{maxonejump} and theorem \ref{penalizedthm}, this estimate is crucial only to prove the existence of a limit as $\epsilon$ goes to $0$ (theorem \ref{limitsinglejump}).\\
\\
As explained in the introduction, we shall characterize in this part the solution of a Fokker-Planck equation (the limit density) of jumping particles with dual properties and not with a PDE. The main duality idea of this part is that a Fokker-Planck equation of jumping particle is somehow the dual or adjoint equation of a QVI, which by the way describes how it is optimal to jump and thus how dynamics of jumping particles evolve. Thus QVI are crucial for the study of the density of jumping particles. For the sake of clarity, the results needed on QVI are given in the appendix. We define here the notion of a smooth cost of jumps $k$. A function $k$ is said to be a smooth cost of jumps if it satisfies :
\begin{equation}\label{hypk}
\begin{cases}
\forall \xi \in K, k( \cdot, \xi) \in H^2(\mathbb{T}^d); \\
k^* :x \to \inf_{\xi \in K} k(x, \xi) \in W^{2, \infty}(\mathbb{T}^d);\\
\exists k_0 > 0 \text{ such that } k \geq k_0;
\end{cases}
\end{equation}
where $K$ is a finite subset of $\mathbb{T}^d$. The interpretation of $k(x,\xi)$ is that it is the cost paid by the player (or the energy used by a particle) to instantaneously go from $x$ to $x + \xi$. We also define the operator $M$ which plays an important role in the study of $QVI$ by :
\begin{equation}\label{defM}
M(k,u)(t,x) = \inf_{\xi \in K} \{k(x, \xi) + u(t, x + \xi)\}.
\end{equation}
When there is no ambiguity on $k$, we shall write only $Mu = M(k,u)$.
\\
\\
We begin the study of Fokker-Planck equations of jumping particles with the simpler case of a unique possible jump before addressing the case of a finite number of possible jumps. In each of these situations, we begin by constructing a penalized version of the problem and we then pass to the limit in the resulting penalized equation.

\section{The case of a unique possible jump}
We work here in the case in which a single jump $\xi$ is possible. We also assume that there is a measurable set $A$ on which the particles jump. The first part of this section is devoted to the study of a penalized equation. We then prove the existence and uniqueness of the limit density under a certain assumption on the set $A$. The study of the penalized equation is quite simple however we warn the reader that the estimate of lemma \ref{estimate1} that we use at a penalized level is crucial to study the limit case. An important feature of our model is that some assumption has to be made on the set on which the particles jump. This assumption shows in some sense the limit of this model. It can be formulated in the following way.

\begin{hyp}\label{hyp1}
The set $A$ is such that there exists $k \in L^{\infty}$ satisfying $(\ref{hypk})$, and $u \in L^2((0,T), H^2(\mathbb{T}^d))\cap H^1((0,T), L^2(\mathbb{T}^d)) \cap L^{\infty}((0,T) \times \mathbb{T}^d)$ such that 
\[
\begin{cases}
u(t,x) = k(x, \xi) + u(t, x + \xi) \text{ on } A;\\
u(T) = 0.
\end{cases}
\]
\end{hyp}
Formally, this assumption restricts the situations which we are able to model to a case in which the particles do not jump an infinite number of time in a finite time interval. Let us also note that if we interpret $u$ as the value function of some impulse control problem, then we are assuming that it is optimal to use its control on the set $A$. We advice the interested reader to look at the case $A = \mathbb{T}^d$ everywhere to convince himself/herself that the model we present is indeed not applicable to all measurable sets.

\subsection{ A penalized version of the problem}
In order to understand how the density of jumping particles behave we first introduce a smoother version of the problem. We here assume that the particles do not simply jump when they are in $A$ but that they have a given uniform probability of jumping in this set. This method allows us to work with a PDE. We naturally work with the equation :
\begin{equation*}{(2)}
\begin{cases}
\partial_t m_{\epsilon}(t,x) -  \nu\Delta m_{\epsilon}(t,x) + \frac{1}{\epsilon} \mathbb{1}_{A}m_{\epsilon}(t,x) - \frac{1}{\epsilon} (\mathbb{1}_{A}m_{\epsilon})(t,x- \xi) = 0 \text{ in } (0,T)\times \mathbb{T}^d; \\
m(0,x) = m_0(x) \text{ in } \mathbb{T}^d;
\end{cases}
\end{equation*}
where $\epsilon > 0$ is a real number which describes the probability of jumping in $A$. The term $ \frac{1}{\epsilon} \mathbb{1}_{A} m(t,x)$ stands for the leaving rate of particles which jump in $A$. The term $- \frac{1}{\epsilon}( \mathbb{1}_{A} m)(t,x- \xi)$ stands for the arriving rate of particles which jump at $(t,x - \xi)$ and thus which arrive at $(t,x)$. As $\epsilon$ goes to $0$, the probability of jumping becomes more and more important. Thus finding the limit as $\epsilon$ goes to $0$ gives the desired density of particles.\\
\\
For the rest of this paper we may not always write the "$(t,x)$" in order to lighten the notations. So by default, if the variable considered is not written, it is $(t,x)$. 
\\
\\
We begin by showing how we can find estimates on such a penalized equation. Let us introduce the set $H$ defined by
\begin{equation}\label{defH}
H := \{ v \in L^2((0,T), H^1(\mathbb{T}^d)), \partial_t v \in L^2((0,T), H^{-1}(\mathbb{T}^d))\}.
\end{equation}
The following lemma will be useful to establish a priori estimates on $m$.

\begin{Lemma}\label{comp1}
Let $m \in L^2((0,T), H^2(\mathbb{T}^d))$ be a solution of $(\ref{onejump})$ and $k$ satisfying $(\ref{hypk})$. If $m\geq 0$, $u \in H$ and $u\leq Mu$ almost everywhere on $A$, then 
\[
\int_0^T \int_{\mathbb{T}^d} (\partial_t m -  \nu\Delta m)u + \frac{1}{\epsilon} \int_{A}km \geq 0.
\]
\end{Lemma}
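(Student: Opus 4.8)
The plan is to exploit the penalized equation $(\ref{onejump})$ directly rather than integrating by parts. Since $m$ solves $(\ref{onejump})$, one has in $L^2((0,T)\times\mathbb{T}^d)$ the pointwise identity $\partial_t m-\nu\Delta m=\frac{1}{\epsilon}\big[(\mathbb{1}_{A}m)(t,x-\xi)-\mathbb{1}_{A}m\big]$, and I would substitute this expression into the first integral, turning it into a purely algebraic expression in $m$ and the translated values of $u$. Every pairing appearing is a genuine $L^1$ integral: $m\in L^2((0,T),H^2(\mathbb{T}^d))$ and $u\in H\subset L^2((0,T),H^1(\mathbb{T}^d))$ give $mu\in L^1$, while $k\in L^\infty$ (from $(\ref{hypk})$) gives $km\in L^1$.

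The key manipulation is a change of variables in the arrival term. Because we work on the torus, translation by $\xi$ is a measure-preserving bijection, so
\[
\frac{1}{\epsilon}\int_0^T\!\!\int_{\mathbb{T}^d}(\mathbb{1}_{A}m)(t,x-\xi)\,u(t,x)\,dx\,dt=\frac{1}{\epsilon}\int_0^T\!\!\int_{A}m(t,x)\,u(t,x+\xi)\,dx\,dt,
\]
whereas the leaving term contributes simply $-\frac{1}{\epsilon}\int_0^T\!\int_{A}mu$. Combining the two leaves
\[
\int_0^T\!\!\int_{\mathbb{T}^d}(\partial_t m-\nu\Delta m)\,u=\frac{1}{\epsilon}\int_0^T\!\!\int_{A}m(t,x)\big[u(t,x+\xi)-u(t,x)\big]\,dx\,dt.
\]

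It then remains to add the penalization cost. In the single-jump case $K=\{\xi\}$ one has $Mu(t,x)=k(x,\xi)+u(t,x+\xi)$, so adding $\frac{1}{\epsilon}\int_{A}km$ produces the integrand $m\,[Mu-u]$ on $A$. Since $u\leq Mu$ almost everywhere on $A$ by hypothesis and $m\geq0$ by assumption, this integrand is nonnegative, which yields the claimed inequality.

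I expect no serious obstacle: the statement is essentially a bookkeeping identity combined with the sign condition coming from the quasi variational inequality. The only points requiring any care are the justification of the translation change of variables (immediate on $\mathbb{T}^d$) and the verification that each pairing is a well-defined $L^1$ integral, both of which follow from the stated regularity of $m$ and $u$ and the boundedness of $k$. The real content, conceptually, is the recognition that $(\ref{onejump})$ is, up to the cost term $\frac1\epsilon\int_A km$, the adjoint of the obstacle-type operator $u\mapsto u-Mu$ restricted to $A$; this is precisely the duality around which the whole section is organized.
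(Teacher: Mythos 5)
Your proof is correct and coincides with the paper's own argument: both multiply the penalized equation by $u$ (equivalently, substitute the equation into the pairing), perform the translation change of variables on the torus to rewrite the arrival term as $\frac{1}{\epsilon}\int_A m(t,x)\,u(t,x+\xi)$, and then conclude from $u \leq Mu$ on $A$ together with $m \geq 0$. The only difference is that you spell out the integrability of each pairing, which the paper leaves implicit.
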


\begin{proof}
We multiply $(\ref{onejump})$ by $u$ and we integrate, we then obtain after a change of variable :
\[
\int_0^T \int_{\mathbb{T}^d} (\partial_t m -  \nu\Delta m)u + \frac{1}{\epsilon} \int_{A} m u = \frac{1}{\epsilon} \int_{A}m(t,x) u(t,x + \xi)dt dx.
\]
Using the fact that $u \leq Mu$ on $A$ we deduce the desired result.
\end{proof}

The previous result suggests to work with the set $\mathcal{H}(k)$ for some $k$ where $\mathcal{H}(k)$ is defined by :
\[
\mathcal{H}(k) := \{ m \in L^2((0,T), H^1(\mathbb{T}^d)), D(k,m) > - \infty\};
\]
where $D(k,m)$ is defined by
\[
D(k,m) := \inf \{ \int_0^T (-\partial_t u -  \nu\Delta u, m)_{H^{-1} \times H^1}  - \int_{\mathbb{T}^d}u(0) m_0 | u \in H, u \leq M(k,u) \text{ on } A, u(T) = 0 \}.
\]
We recall that $M$ is defined in (\ref{defM}). When no confusion is possible for $k$, as we do for $M$, we shall only write $D(m)$ and $\mathcal{H}$.\\
\\
The set $\mathcal{H}$ has to be interpreted as the set of admissible solutions of the Fokker-Planck equation. Indeed if $m$ is a density of jumping particles, then when particles leave (or jump) we should have exactly the same arriving particles $\xi$ further. We recall that $\partial_t m - \nu\Delta m \leq 0$ is interpreted as particles leaving and $\partial_t m - \nu\Delta m \geq 0$ as particles arriving. Thus it is natural to measure the variation of $\partial_t m -\nu \Delta m$ with functions $u$ such that $u(x) - u(x + \xi) \leq k$. This "test" quantifies the fact that some negativity for $\partial_t m - \nu\Delta m$ has to be compensate by some positivity of this quantity $\xi$ further. Moreover, as jumps can only occur on $A$ we restrict ourselves to the case in which those conditions are satisfied only on $A$. We now prove the following lemma, which states that in some sense, the quantity $D(m)$ is of interest to bound $m$ in some functional space (this lemma is crucial to study the limit $\epsilon \to 0$):

\begin{Lemma}\label{estimate1}
Let $k$ be such that it satisfies $(\ref{hypk})$ and hypothesis $1$ with a given $u \in H$. For any $ m \in H \cap \mathcal{H}(k)$, $m \geq 0$, there exists $C(k) >0$ depending only on $k$ and $||u||_{L^{\infty}}$ such that
\[
||m||^2_{L^2((0,T), H^1)} \leq -D(k,m) + C(k)(1 + ||m||_{L^2((0,T), H^1)} )||m_0||_{L^2}.
\]
\end{Lemma}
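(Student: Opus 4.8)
The plan is to bound $D(k,m)$ from above by exhibiting a single admissible competitor $u$ in its defining infimum, since $D(k,m)\le J(u)$ for every admissible $u$, where I abbreviate
\[
J(u):=\int_0^T(-\partial_t u-\nu\Delta u,m)_{H^{-1}\times H^1}\,dt-\int_{\mathbb T^d}u(0)m_0.
\]
First I would rewrite $J(u)$ in a form that exposes the coercive quantities. Using that $u,m\in H$, the Lions--Magenes integration-by-parts formula in time together with $u(T)=0$ and $m(0)=m_0$ gives
\[
J(u)=\int_0^T\langle\partial_t m,u\rangle\,dt+\nu\int_0^T(\nabla m,\nabla u)_{L^2}\,dt,
\]
so that, heuristically, $J(u)=\int_0^T\langle\partial_t m-\nu\Delta m,u\rangle\,dt$. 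Thus $-D(k,m)=\sup_u(-J(u))$, and everything reduces to producing one admissible $u$ for which $-J(u)$ is at least $\|m\|_{L^2H^1}^2$ minus the stated error.

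The natural guess $u=-m$ gives $-J(-m)=\nu\|\nabla m\|_{L^2L^2}^2+\tfrac12\|m(T)\|^2-\tfrac12\|m_0\|^2$, producing the gradient term with the correct sign, but it is inadmissible on two counts: it violates the terminal condition, and on $A$ it violates $u\le M(k,u)$ wherever $m$ increases across the jump. I would repair the terminal condition by inserting a time weight $\psi(t)\ge0$ with $\psi(T)=0$ (e.g.\ $\psi(t)=T-t$) and working with $-\psi m$; the computation
\[
-J(-\psi m)=\nu\int_0^T\psi\|\nabla m\|^2\,dt-\tfrac12\int_0^T\psi'\|m\|^2\,dt-\tfrac12\psi(0)\|m_0\|^2
\]
shows that a strictly decreasing $\psi$ additionally generates the missing $L^2(L^2)$ coercivity through $-\tfrac12\int_0^T\psi'\|m\|^2\,dt$. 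A delicate point here is that $\psi(T)=0$ weakens the gradient control near $t=T$, so recovering the \emph{uniform} $H^1$ bound will require some care in the choice of weight (or patching with the unweighted choice on a subinterval).

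The main obstacle is the jump constraint on $A$: the defect of $-\psi m$ equals $\psi\big(m(\cdot+\xi)-m(\cdot)\big)-k(\cdot,\xi)$, which is positive wherever $m$ increases steeply across $\xi$, and no bounded shift repairs it. Here I would use the function (call it $\bar u$) furnished by Hypothesis \ref{hyp1}, whose defining property $\bar u(\cdot)-\bar u(\cdot+\xi)=k(\cdot,\xi)\ge k_0>0$ on $A$ provides exactly the built-in drop needed to absorb part of the defect. Concretely one projects $-\psi m$ onto the admissible convex set --- which is well behaved, since $M(k,\cdot)$ is concave so that $\{u:u\le M(k,u)\text{ on }A\}$ is convex and, as one checks directly, stable under both $\min$ and $\max$ --- and controls the projection error by comparison with $\bar u$ and a multiple of $k_0^{-1}$. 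I expect this projection/obstacle step, and the estimation of its energy, to be the crux: the resulting corrections should produce precisely the cross terms bounded by $C(k,\|\bar u\|_{L^\infty})\,(1+\|m\|_{L^2H^1})\,\|m_0\|_{L^2}$. Finally I would discard the nonnegative favorable terms, absorb the linear term $C(1+\|m\|)\|m_0\|$ by Young's inequality where convenient, and rearrange $D(k,m)\le J(u)$ into the claimed bound.
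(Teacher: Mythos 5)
Your opening reduction (rewriting $J(u)=\int_0^T\langle\partial_t m,u\rangle_{H^{-1}\times H^1}+\nu\int_0^T\int_{\mathbb{T}^d}\nabla m\cdot\nabla u$ using $m\in H$ and $u(T)=0$) coincides with the paper's first step, and your diagnosis of the two obstructions to the naive competitor $-\psi m$ is accurate. But the step you yourself call the crux --- projecting $-\psi m$ onto $\{u\le M(k,u)\text{ on }A\}$ and controlling the error ``by comparison with $\bar u$ and a multiple of $k_0^{-1}$'' --- is not a fillable gap along the lines you sketch; it is where the argument breaks. First, the constraint violation of $-\psi m$ on $A$ is $\psi\,(m(\cdot+\xi)-m(\cdot))-k(\cdot,\xi)$, and since $m$ is only in $L^2((0,T),H^1)$ there is no pointwise control on $m(\cdot+\xi)-m(\cdot)$: the violation is of size $\|m\|_{L^2}$, not small. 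Comparison with $\bar u$ cannot absorb it, because $\bar u$ saturates the constraint ($\bar u=M\bar u$ on $A$) and hence has zero slack: for the convex combination $u_\theta=(1-\theta)\bar u-\theta\psi m$ the defect on $A$ equals exactly $\theta\bigl(\psi(m(\cdot+\xi)-m(\cdot))-k\bigr)$, still positive on an uncontrolled set for every $\theta>0$. So the correction $e:=P(-\psi m)+\psi m$ is of order $\|m\|_{L^2(H^1)}$, not an error term. Second, and decisively, to finish you must evaluate $\int_0^T\langle\partial_t m-\nu\Delta m,e\rangle$; a metric projection satisfies no equation you can integrate against, so the only available bound is $\|\partial_t m-\nu\Delta m\|_{L^2(H^{-1})}\,\|e\|_{L^2(H^1)}$. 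This brings in $\|\partial_t m\|_{L^2(H^{-1})}$, a quantity the lemma's conclusion must not contain: the estimate is applied to the penalized solutions $m_\epsilon$ in Theorem \ref{limitsinglejump}, for which this norm blows up as $\epsilon\to 0$ (indeed, were it uniformly bounded, the limit density would retain the trace $m(0)=m_0$, which the paper points out is false in general). Alternatively the bound is quadratic in $\|m\|_{L^2(H^1)}$ with no small factor, so it cannot be absorbed into the left-hand side. Note also that your argument never uses $m\ge 0$, which is essential.

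The paper escapes precisely this trap by choosing a competitor that solves an equation instead of being a projection: by Proposition \ref{weakqvi} (this is where Hypothesis \ref{hyp1} enters), it solves the \emph{weak QVI with source term} $f=\Delta m\in L^2((0,T),H^{-1})$, obtaining $\tilde u$ which is admissible by construction ($\tilde u\le M(k,\tilde u)$ on $A$), carries the supersolution information $-\partial_t\tilde u-\nu\Delta\tilde u\le\Delta m$ inside its weak variational inequality, and obeys the a priori bound $\|\tilde u\|_{L^\infty(L^2)}\le C\bigl(1+\|\Delta m\|_{L^2(H^{-1})}\bigr)\le C\bigl(1+\|m\|_{L^2(H^1)}\bigr)$. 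Then $D(k,m)\le\int_0^T\langle\partial_t m-\nu\Delta m,\tilde u\rangle$, and the weak variational inequality together with $m\ge 0$ allows the operator to be passed onto $m$:
\[
\int_0^T\langle\partial_t m-\nu\Delta m,\tilde u\rangle\;\le\;\int_0^T\langle\Delta m,m\rangle+\|m_0\|_{L^2}\|\tilde u\|_{L^\infty(L^2)},
\]
whence $-D(k,m)\ge\|\nabla m\|^2_{L^2}-C\bigl(1+\|m\|_{L^2(H^1)}\bigr)\|m_0\|_{L^2}$. No norm of $\partial_t m$ ever appears, because the dangerous pairing is evaluated through the inequality satisfied by $\tilde u$ rather than by Cauchy--Schwarz. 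If you want to salvage your plan, this is the missing idea: replace the projection of $-\psi m$ by the solution of the adjoint weak QVI with source $\Delta m$.
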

\begin{Rem}
The assumption $m \in H$ is crucial as it allows us to deal with the problem of the time regularity. Moreover, the constant $C(k)$ does not depend on $A$.
\end{Rem}
\begin{proof}
Because hypothesis $1$ is satisfied, we are able to apply proposition \ref{weakqvi} (appendix) and we deduce that there exists $\tilde{u} \in L^2((0,T), H^1(\mathbb{T}^d))$ such that :
\[
\begin{cases}
\tilde{u} \leq M(k,\tilde{u}) \text{ on } A;\\
\forall v \in H, v \leq M(k,\tilde{u}) \text{ on A};\\
-\int_0^T \int_{\mathbb{T}^d} \partial_t v (v -\tilde{u}) + \nu \int_0^T \int_{\mathbb{T}^d} \nabla \tilde{u} \cdot \nabla(v-\tilde{u}) + \frac{1}{2}\int_{\mathbb{T}^d}|v(T)|^2 \geq \int_0^T \int_{\mathbb{T}^d} (\Delta m) (v-\tilde{u}).
\end{cases}
\]
Because we made the assumption $m \in H$, we can remark that 
\[
D(k,m) =  \inf \{ \int_0^T (\partial_t m -  \nu\Delta m, v)_{H^{-1} \times H^1}  | v \in L^2((0,T), H^1(\mathbb{T}^d)), v \leq M(k,v) \text{ on } A\}.
\]
Thus we deduce
\begin{equation}\label{astep}
D(k,m) \leq \int_0^T (\partial_t m -  \nu\Delta m, \tilde{u})_{H^{-1} \times H^1}.
\end{equation}
One would like to write :
\[
\begin{aligned}
\int_0^T (\partial_t m -  \nu\Delta m, \tilde{u})_{H^{-1} \times H^1} &= \int_0^T (-\partial_t \tilde{u} - \nu \Delta \tilde{u}, m)_{H^{-1} \times H^1} - \int_{\mathbb{T}^d} \tilde{u}(0) m_0;\\
& \leq \int_0^T ( \Delta m, m)_{H^{-1} \times H^1} - \int_{\mathbb{T}^d}\tilde{u}(0) m_0;
\end{aligned}
\]
but since $\tilde{u} \notin H$, this does not make sense. However, because of the weak variational inequality satisfied by $\tilde{u}$, we can deduce that 
\[
\int_0^T (\partial_t m -  \nu\Delta m, \tilde{u})_{H^{-1} \times H^1} \leq \int_0^T (\Delta m, m)_{H^{-1} \times H^1} + ||m_0||_{L^2}||\tilde{u}||_{L^{\infty}(L^2)}.
\]
Hence we obtain that
\[
D(k,m) \leq - \int_0^T \int_{\mathbb{T}^d}|\nabla m|^2 + ||m_0||_{L^2}||\tilde{u}||_{L^{\infty}(L^2)}.
\]
Recalling the estimate of proposition \ref{weakqvi} (appendix), the result is proved.
\end{proof}

This lemma suggests to find a priori estimates for solutions of $(\ref{onejump})$ by looking at the quantity $D(m)$. However, this estimate requires the positivity of $m$. In order to use this estimate to exhibit solutions of $(\ref{onejump})$, we prove a maximum principle for this equation. This proof is very general and can be applied to more general equations. See \citep{lions2007cours} for an example of the use of this proof for systems of conservation laws for instance.

\begin{Prop}\label{maxonejump}
Let $\lambda \in L^{\infty}((0,T) \times \mathbb{T}^d)$, $\lambda \geq 0$, $m_0 \in L^2(\mathbb{T}^d)$, $m_0 \geq 0$, and $m \in L^2((0,T)\times \mathbb{T}^d)$ be a solution of
\begin{equation}\label{eqlambda}
\begin{cases}
\partial_t m(t,x) -  \nu\Delta m(t,x) + \lambda(t,x) m(t,x) - \lambda(t,x - \xi) m(t,x- \xi) = 0 \text{ in } (0,T) \times \mathbb{T}^d ;\\
m(0) = m_0 \text{ in } \mathbb{T}^d.
\end{cases}
\end{equation}
Then, $m \geq 0$.
\end{Prop}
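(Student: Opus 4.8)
The plan is to run a standard energy-type maximum principle by testing the equation against the negative part of $m$, and then to close the estimate with Gr\"onwall's lemma. Write $m^- := \max(-m,0) \geq 0$ for the negative part, so that $m = m^+ - m^-$; since $m_0 \geq 0$ we have $m^-(0) = 0$, and the whole point is to show that $m^-$ stays zero. The key algebraic observation is that the two ``diagonal'' terms $\lambda m$ and $-\nu\Delta m$ produce favourable signs when tested against $-m^-$, while the only dangerous contribution is the nonlocal gain term $-\lambda(t,\cdot-\xi)m(t,\cdot-\xi)$, which I will control using the translation invariance of the Lebesgue integral on $\mathbb{T}^d$.

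Concretely, I would multiply (\ref{eqlambda}) by $-m^-$ and integrate over $\mathbb{T}^d$. Using $\nabla m^- = -\nabla m\,\mathbb{1}_{\{m<0\}}$ and $(-m^-)\,m = (m^-)^2$ on $\{m<0\}$, the first three terms produce
\[
\frac{1}{2}\frac{d}{dt}\int_{\mathbb{T}^d}(m^-)^2 + \nu\int_{\mathbb{T}^d}|\nabla m^-|^2 + \int_{\mathbb{T}^d}\lambda\,(m^-)^2 ,
\]
all three of which are nonnegative (here $\nu>0$ and $\lambda\geq 0$ are used). For the nonlocal term I split $m(t,\cdot-\xi) = m^+(t,\cdot-\xi) - m^-(t,\cdot-\xi)$: the piece carrying $m^+(t,\cdot-\xi)$ pairs with $\lambda(t,\cdot-\xi)\geq 0$ and $m^-\geq 0$ to give a further nonnegative contribution, which I simply discard, while the remaining piece is bounded in absolute value by $\|\lambda\|_{L^\infty}\int_{\mathbb{T}^d} m^-(t,\cdot-\xi)\,m^-(t,\cdot)$. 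By Young's inequality together with the fact that $x\mapsto x-\xi$ preserves the integral over the torus, this is at most $\|\lambda\|_{L^\infty}\int_{\mathbb{T}^d}(m^-)^2$.

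Putting these together and setting $E(t) := \int_{\mathbb{T}^d}(m^-(t,x))^2\,dx$, I obtain the differential inequality $E'(t) \leq 2\|\lambda\|_{L^\infty}E(t)$ with $E(0)=0$, so Gr\"onwall's lemma forces $E\equiv 0$, i.e. $m^-\equiv 0$ and hence $m\geq 0$.

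The step I expect to be the genuine obstacle is not the algebra above but its justification at the stated regularity: $m$ is only assumed to be an $L^2((0,T)\times\mathbb{T}^d)$ solution, so $m^-$ is not immediately an admissible test function and the identity $\int \partial_t m\,(-m^-) = \tfrac12\frac{d}{dt}\int(m^-)^2$ requires care. I would handle this either by first upgrading the regularity (the penalized equation (\ref{onejump}) yields solutions in $H$, for which the duality pairing $\langle\partial_t m, m^-\rangle$ and the corresponding integration-by-parts formula are available), or by replacing $s\mapsto \tfrac12(s^-)^2$ with a smooth convex approximation $\beta_\delta$, running the computation with $\beta_\delta'(m)$ in place of $-m^-$ and passing to the limit $\delta\to 0$. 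The generality the statement alludes to (cf. \citep{lions2007cours}) comes precisely from this renormalization argument, which uses only $\lambda\geq 0$, $\nu>0$ and the $L^2$-isometry property of the shift $x\mapsto x-\xi$ on $\mathbb{T}^d$.
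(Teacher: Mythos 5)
Your proof is correct, but it follows a genuinely different route from the paper's. The paper argues pointwise: it first takes $\lambda$ and $m_0$ smooth (so that $m$ is classical by parabolic regularity), assumes $m(t_0,x_0)<0$ for contradiction, and considers the perturbation $m(t,x)+\delta t$; at the first point $(t_1,x_1)$ where this perturbation vanishes one has $\partial_t(m+\delta t)(t_1,x_1)\le 0$ and $\Delta (m+\delta t)(t_1,x_1)\ge 0$, and plugging in the equation yields $\partial_t(m+\delta t)(t_1,x_1)>0$, a contradiction; rough $\lambda, m_0$ are then recovered by approximation, invoking the uniqueness statement proved independently in the subsequent theorem. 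Your energy argument --- testing with $-m^-$, discarding the three favourable terms, bounding the nonlocal gain by $\|\lambda\|_{L^\infty}\int_{\mathbb{T}^d}(m^-)^2$ via Young/Cauchy--Schwarz plus shift invariance of the torus, and closing with Gr\"onwall --- is in fact the same machinery the paper deploys for \emph{uniqueness} at the end of its existence theorem (testing the difference equation by $\delta m$), so it merges positivity and uniqueness into a single $L^2$ computation and avoids both the smoothing step and the appeal to a separate uniqueness result. The regularity caveat you flag is the right one and is easily discharged along the first line you suggest: an $L^2$ distributional solution has right-hand side $-\lambda m+(\lambda m)(\cdot-\xi)\in L^2((0,T)\times\mathbb{T}^d)$, hence coincides with the unique $H$-solution of the heat equation with that source and datum $m_0$, and for $m\in H$ the identity $\langle\partial_t m,-m^-\rangle=\tfrac12\tfrac{d}{dt}\|m^-\|_{L^2}^2$ and the trace $m^-(0)=m_0^-=0$ are standard. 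As for what each approach buys: the paper's pointwise proof is advertised as portable to equations without an obvious energy structure (it cites \citep{lions2007cours} for systems of conservation laws), whereas yours leans on the $L^2$-isometry of the shift; on the other hand, your treatment of the nonlocal term is airtight as written, while in the paper's contact-point computation the term $(\lambda m)(t_1,x_1-\xi)$ is only bounded below by $-\|\lambda\|_{L^\infty}\delta t_1$ rather than by $0$, so the claimed contradiction really requires replacing $\delta t$ by something like $\delta e^{Ct}$ with $C>\|\lambda\|_{L^\infty}$ --- a repair your method never needs.
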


\begin{proof}
We assume in a first time that $\lambda$ and $m_0$ are smooth functions and that $m_0 > 0$. Then by classical parabolic estimates, $m$ is also smooth ($C^1$ in time and $C^2$ in space). If there exists $(t_0, x_0) \in (0,T) \times \mathbb{T}^d$ such that $m(t_0, x_0) < 0$, then there exists $\delta > 0$ such that $m(t_0, x_0) + \delta t_0 < 0$. We define $µ$ by :
\[
µ(t,x) = m(t,x) + \delta t, \forall (t,x) \in (0,T) \times \mathbb{T}^d.
\]
For any $x\in \mathbb{T}^d, µ(0,x) > 0$ and $µ(t_0, x_0) < 0$, thus there exists $(t_1,x_1) \in (0,T) \times \mathbb{T}^d$ such that
\[
\begin{cases}
µ(t) \geq 0, \forall t \leq t_1; \\
µ(t_1,x_1) = 0 ;\\
\partial_t µ(t_1,x_1) \leq 0 ;\\
\Delta µ (t_1,x_1) \geq 0.
\end{cases}
\]
Let us remark that 
\[
\begin{aligned}
\partial_t µ &= \partial_t m + \delta ;\\
& = \delta +  \nu\Delta m - \lambda m + (\lambda m)(\cdot - \xi).
\end{aligned}
\]
Evaluating this last expression at $(t_1, x_1)$ we obtain that $\partial_t µ(t_1,x_1) > 0$ which is impossible. So we have proven that if $\lambda$ and $m_0$ are smooth, then $m \geq 0$. Because of the uniqueness of solutions of (\ref{eqlambda}) (which will be independently proved in the theorem $1.1$) this result extends to non smooth $\lambda$ and $m_0$ with only $m_0 \geq 0$.
\end{proof}

Next, we establish the main result of this section : the existence and uniqueness of a solution of $(\ref{onejump})$.

\begin{Theorem}\label{penalizedthm}
For any $m_0 \in L^2(\mathbb{T}^d)$, $m_0 \geq 0$, there exists a unique $m \in H$ such that 
\begin{equation*}{(2)}
\begin{cases}
\partial_t m -  \nu \Delta m + \frac{1}{\epsilon}\mathbb{1}_A(t,x) m(t,x) - \frac{1}{\epsilon}\mathbb{1}_A(t,x - \xi) m(t,x- \xi) = 0 \text{ in } (0,T) \times \mathbb{T}^d ;\\
m(0) = m_0 \text{ in } \mathbb{T}^d.
\end{cases}
\end{equation*}
where the first line has to be taken in the sense of distributions. Moreover, $m\geq 0$.
\end{Theorem}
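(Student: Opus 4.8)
The plan is to treat equation~$(2)$ as a linear parabolic problem in the Gelfand triple $H^1(\mathbb{T}^d) \hookrightarrow L^2(\mathbb{T}^d) \hookrightarrow H^{-1}(\mathbb{T}^d)$ and to apply the standard variational (Lions) existence and uniqueness theorem, the positivity being deferred entirely to Proposition~\ref{maxonejump}. First I would recast the equation in weak form: I seek $m \in H$ such that $m(0) = m_0$ and, for almost every $t$ and every $v \in H^1(\mathbb{T}^d)$,
\[
(\partial_t m, v)_{H^{-1} \times H^1} + a(m,v) = 0, \qquad a(m,v) := \nu \int_{\mathbb{T}^d} \nabla m \cdot \nabla v + \frac{1}{\epsilon} \int_{\mathbb{T}^d} \mathbb{1}_{A} m\, v - \frac{1}{\epsilon} \int_{\mathbb{T}^d} (\mathbb{1}_{A} m)(\cdot - \xi)\, v.
\]
This weak formulation is equivalent to the distributional form in the statement, since $m \in H$ has $\partial_t m \in L^2((0,T),H^{-1})$.

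The two properties to check are continuity and a G\aa rding inequality. Continuity, $|a(m,v)| \leq C\|m\|_{H^1}\|v\|_{H^1}$, is immediate: the nonlocal term is controlled by the fact that multiplication by $\mathbb{1}_A$ and translation by $\xi$ are both contractions on $L^2(\mathbb{T}^d)$. For the G\aa rding inequality I would test with $v = m$; the Dirichlet term gives $\nu\|\nabla m\|_{L^2}^2$, the local zeroth-order term $\frac{1}{\epsilon}\int_A m^2 \geq 0$ is discarded, and the nonlocal term is bounded by Cauchy--Schwarz via
\[
\Big| \frac{1}{\epsilon} \int_{\mathbb{T}^d} (\mathbb{1}_{A} m)(\cdot - \xi)\, m \Big| \leq \frac{1}{\epsilon} \|m\|_{L^2}^2,
\]
so that $a(m,m) + (\nu + \epsilon^{-1})\|m\|_{L^2}^2 \geq \nu\|m\|_{H^1}^2$. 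The crucial observation making this routine is that the whole zeroth-order operator is a bounded perturbation on $L^2$ and therefore never interferes with the coercivity supplied by the Laplacian.

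With continuity and coercivity in hand, the Lions parabolic theorem produces a unique $m \in L^2((0,T),H^1(\mathbb{T}^d))$ with $\partial_t m \in L^2((0,T),H^{-1}(\mathbb{T}^d))$, i.e. $m \in H$, solving the weak formulation and attaining the initial datum in the trace sense (meaningful since $H \hookrightarrow C([0,T],L^2)$). An equally valid route is to write the equation as $\partial_t m = \nu\Delta m + Bm$, where $B \colon m \mapsto -\epsilon^{-1}\mathbb{1}_A m + \epsilon^{-1}(\mathbb{1}_A m)(\cdot - \xi)$ is bounded on $L^2(\mathbb{T}^d)$, and to invoke the bounded-perturbation theorem for the heat semigroup. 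It then remains only to establish $m \geq 0$, which I would read off from Proposition~\ref{maxonejump} applied with $\lambda = \epsilon^{-1}\mathbb{1}_A \in L^\infty$, $\lambda \geq 0$, $m_0 \geq 0$.

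The main obstacle here is logical rather than analytical. The single nonstandard feature is the nonlocal translation term, and once it is recognized as a bounded $L^2$-perturbation the classical machinery applies verbatim; the real point of care is the order of the argument. Existence and uniqueness must be obtained first and independently of any sign information, and positivity is deduced only afterwards from the maximum principle. This is precisely what makes the construction consistent: the general-data statement of Proposition~\ref{maxonejump} was explicitly left to rely on the uniqueness proved in this theorem, so establishing existence and uniqueness before invoking positivity avoids any circularity.
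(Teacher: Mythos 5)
Your proof is correct, but it takes a genuinely different route from the paper. The paper does not treat equation $(2)$ by direct linear theory: it freezes the zeroth-order nonlocal terms, defines a solution map $\mathcal{F}$ via the heat equation, and runs Schaefer's fixed point theorem; the required a priori bound on fixed points of $\sigma\mathcal{F}$ is obtained by first invoking the maximum principle (Proposition \ref{maxonejump}) to get positivity and then the duality estimate of Lemmas \ref{comp1} and \ref{estimate1}; uniqueness is proved separately by testing the equation for the difference against itself and applying Gronwall's lemma. Your argument replaces all of this with the observation that the nonlocal term is a bounded bilinear perturbation on $L^2(\mathbb{T}^d)$ (translation being an $L^2$-isometry), so that the form $a(t;m,v)$ is continuous and satisfies a G{\aa}rding inequality, and Lions' theorem in the Gelfand triple $H^1 \hookrightarrow L^2 \hookrightarrow H^{-1}$ yields existence and uniqueness in $H$ at once, with no sign information needed; your semigroup-perturbation variant is equally valid. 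Your route is more elementary and, as you point out, logically cleaner: in the paper the existence half of the theorem uses Proposition \ref{maxonejump} for non-smooth $\lambda$, which in turn rests on the uniqueness half of the same theorem, so the reader must untangle the ordering (uniqueness first, then the maximum principle, then existence), whereas your scheme makes positivity a genuine corollary. What the paper's heavier route buys is expository rather than logical: it exercises, already at the penalized level, the positivity-plus-duality-estimate machinery (Lemmas \ref{comp1} and \ref{estimate1}) that is indispensable for the limit $\epsilon \to 0$ in Theorem \ref{limitsinglejump} and whose fixed-point structure prefigures the arguments for the coupled MFG system, where no linear theory is available. Note also that your G{\aa}rding-based uniqueness and the paper's Gronwall argument are in substance the same estimate, so the two proofs differ only in how existence is produced.
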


\begin{proof}
We define $\lambda \in L^{\infty}$ by
\[
\lambda = \frac{1}{\epsilon} \mathbb{1}_A
\]
We then define the application $\mathcal{F}$ from $L^2((0,T)\times \mathbb{T}^d)$ to itself by : $\mathcal{F}(m)$ is the only solution in $H$ of
\[
\begin{cases}
\partial_t \mathcal{F}(m) -   \nu \Delta \mathcal{F}(m) + \lambda(t,x) m(t,x) - \lambda(t,x - \xi) m(t,x- \xi) = 0 \text{ in } (0,T) \times \mathbb{T}^d ;\\
m(0) = m_0 \text{ in } \mathbb{T}^d.
\end{cases}
\]
By standard parabolic estimates, $\mathcal{F}$ is continuous and compact. Let us take $µ \in [0,1]$ and $m \in L^2((0,T)\times \mathbb{T}^d)$ such that $m = µ \mathcal{F}(m)$, $m$ satisfies
\[
\begin{cases}
\partial_t m -  \nu \Delta m + µ\lambda(t,x) m(t,x) - µ\lambda(t,x - \xi) m(t,x- \xi) = 0 \text{ in } (0,T) \times \mathbb{T}^d ; \\
m(0) = m_0 \text{ in } \mathbb{T}^d.
\end{cases}
\]
By proposition $\ref{maxonejump}$, $m \geq 0$.  For any $v \in H$ such that $v \leq M v$ on $A$, we obtain using lemma $\ref{comp1}$:
\[
\int_0^T \int_{\mathbb{T}^d} (\partial_t m - \nu \Delta m)v \geq - µ \int_0^T \int_{\mathbb{T}^d} \lambda m.
\]
Hence by the lemma $\ref{estimate1}$ we deduce that there exists $C > 0$ independent of $µ$ such that :
\[
\begin{aligned}
||m||_{L^2((0,T), H^1(\mathbb{T}^d))}^2 & \leq C ||m||_{L^2((0,T),H^1(\mathbb{T}^d))} ||m_0||_{L^2} + µ \int_0^T \int_{\mathbb{T}^d} \lambda m; \\
& \leq C ||m||_{L^2((0,T),H^1(\mathbb{T}^d))} ||m_0||_{L^2} + µ ||\lambda||_{L^2} ||m||_{L^2}.
\end{aligned}
\]
From which we deduce that the set 
\[
\{ m \in L^2, \exists µ \in [0,1], m = µ \mathcal{F}(m)\}
\]
is bounded in $L^2$. Applying Schaefer's fixed point theorem, we obtain that there exists a solution of $(\ref{onejump})$.\\
\\
For any $m_1$ and $m_2$ solutions of $(\ref{onejump})$, we denote by $\delta m = m_1 - m_2$ the difference of these solutions. The function $\delta m$ satisfies
\[
\begin{cases}
\partial_t \delta m(t,x) -  \nu\Delta \delta m(t,x) + \lambda(t,x) \delta m(t,x) - \lambda(t,x - \xi) \delta m(t,x- \xi) = 0 \text{ in } (0,T) \times \mathbb{T}^d ; \\
\delta m(0) = 0 \text{ in } \mathbb{T}^d.
\end{cases}
\]
Multiplying this equation by $\delta m$ and integrating in space, we obtain
\[
\frac{1}{2} \frac{d}{dt} \int_{\mathbb{T}^d}( \delta m)^2 + \nu \int_{\mathbb{T}^d} |\nabla \delta m|^2 + \int_{\mathbb{T}^d} \lambda (\delta m)^2 = \int_{\mathbb{T}^d}(\lambda \delta m)(t,x) (\delta m)(t, x+ \xi) dx;
\]
\[
\frac{1}{2} \frac{d}{dt} \int_{\mathbb{T}^d} (\delta m)^2 \leq ||\lambda||_{L^{\infty}} || \delta m(t)||_{L^2(\mathbb{T}^d)}^2.
\]
Finally, we deduce, using Gronwall's lemma, that $\delta m = 0$ and thus that there exists a unique solution of $(\ref{onejump})$.

\end{proof}

\subsection{Existence of a limit density}
We show here how we can pass to the limit in the equation $(\ref{onejump})$ and hence obtain a characterization of the density of jumping particles.
For the rest of this section, we fix $k$ given in hypothesis $1$. We now describe the behavior of the solutions of $(\ref{onejump})$ as $\epsilon$ goes to $0$.

\begin{Theorem}\label{limitsinglejump}
Assume hypothesis $1$ holds, then there exists $m \in L^2((0,T), H^1(\mathbb{T}^d))$ such that, extracting a subsequence if necessary, $(m_{\epsilon})_{\epsilon}$ converges weakly in $L^2((0,T), H^1(\mathbb{T}^d))$ toward $m$ which satisfies
\[
\begin{cases}
D(m) > - \infty ;\\
m = 0 \text{ a.e. in } A.
\end{cases}
\]
Moreover, for all $v \in H$ such that $v \leq Mv$ on $A$ and $v(T) = 0$, 
\[
\begin{aligned}
&\int_0^T (-\partial_t v - \nu \Delta v, m)_{H^{-1} \times H^1} - \int_{\mathbb{T}^d}v(0)m_0 \\
&\geq \int_0^T (-\partial_t u -  \nu\Delta u, m)_{H^{-1} \times H^1} - \int_{\mathbb{T}^d}u(0)m_0= D(m);
\end{aligned}
\]
for any $u \in H$ which satisfies $u = Mu$ on $A$, $u(T) = 0$.
\end{Theorem}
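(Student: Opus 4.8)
The strategy is to combine the uniform estimate of Lemma \ref{estimate1} with the maximum principle of Proposition \ref{maxonejump} to extract a weak limit, and then to identify the limit through the dual quantity $D(m)$. First I would collect the facts already available for the penalized solutions $m_\epsilon$: by Theorem \ref{penalizedthm} each $m_\epsilon \in H$ exists, is unique, and satisfies $m_\epsilon \geq 0$. The first task is to obtain a bound on $m_\epsilon$ uniform in $\epsilon$. Testing $(\ref{onejump})$ against the fixed $u \in H$ from Hypothesis \ref{hyp1} (which satisfies $u = Mu$ on $A$, hence $u \leq Mu$ on $A$) and applying Lemma \ref{comp1} gives a lower bound on $D(k,m_\epsilon)$ of the form $\frac{1}{\epsilon}\int_A k\,m_\epsilon \geq -\int_0^T(\partial_t m_\epsilon - \nu\Delta m_\epsilon)u$; since $k \geq k_0 > 0$ and $m_\epsilon \geq 0$, this controls $\frac{1}{\epsilon}\int_A m_\epsilon$. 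Feeding this into Lemma \ref{estimate1} yields
\[
\|m_\epsilon\|_{L^2((0,T),H^1)}^2 \leq -D(k,m_\epsilon) + C(k)(1 + \|m_\epsilon\|_{L^2((0,T),H^1)})\|m_0\|_{L^2},
\]
and combining with the control on $D(k,m_\epsilon)$ closes a bound uniform in $\epsilon$ (absorbing the linear term by Young's inequality).

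From this uniform bound I would extract, up to a subsequence, a weak limit $m$ in $L^2((0,T),H^1(\mathbb{T}^d))$, with $m \geq 0$ by weak closedness of the positive cone. The key qualitative property $m = 0$ a.e. in $A$ comes from the leaving-rate term: the bound $\frac{1}{\epsilon}\int_A m_\epsilon \leq C$ uniform in $\epsilon$ forces $\int_A m_\epsilon \to 0$, so the nonnegative limit must vanish a.e. on $A$. Next I would establish $D(m) > -\infty$ and the variational inequality. For any admissible test function $v \in H$ with $v \leq Mv$ on $A$ and $v(T)=0$, I test $(\ref{onejump})$ against $v$, integrate by parts in time (legitimate since $m_\epsilon, v \in H$), and use Lemma \ref{comp1} to discard the penalization contribution with the correct sign; passing to the weak limit in the linear-in-$m$ pairing $\int_0^T(-\partial_t v - \nu\Delta v, m)_{H^{-1}\times H^1} - \int v(0)m_0$ gives the inequality $\geq$ some $\epsilon$-independent lower bound, which shows $D(m) > -\infty$ and that $v$ realizes values above $D(m)$.

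The main obstacle, and the step requiring the most care, is the final identification that the special $u$ with $u = Mu$ on $A$ actually attains the infimum $D(m)$, i.e. that the inequality is an equality for this $u$. The difficulty mirrors the one already met inside Lemma \ref{estimate1}: the natural formal integration by parts $\int_0^T(-\partial_t v - \nu\Delta v, m) - \int v(0)m_0 = \int_0^T(\partial_t m - \nu\Delta m, v) $ is only available after establishing enough regularity, and the candidate minimizer $u$ from Hypothesis \ref{hyp1} lies in $H$ so the pairing is meaningful, but one must verify that the complementarity $u = Mu$ on $A$ together with $m = 0$ a.e. on $A$ makes the penalization residual vanish exactly in the limit rather than merely having a sign. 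Concretely, on $A$ the limit $m$ vanishes, so the term measuring the defect $u - Mu$ against $m$ drops out, and the weak variational inequality for $\tilde u$ from Proposition \ref{weakqvi} pins down the value; I would argue that for $u$ achieving equality on $A$ the two bounding inequalities coincide, forcing the pairing to equal $D(m)$. I expect the delicate point to be justifying the time boundary terms and the limit of $\frac{1}{\epsilon}\int_A(u - Mu)m_\epsilon$ rigorously, since this is exactly where the distinction between $\leq Mu$ and $= Mu$ on $A$ becomes the difference between an inequality and the attained infimum.
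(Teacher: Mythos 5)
Your plan follows the same route as the paper (penalized solutions, the a priori bound of Lemma \ref{estimate1}, weak compactness, vanishing on $A$ from boundedness of the penalty term, passage to the limit in a penalized comparison), but there is a genuine logical gap at the crux: the direction of the inequality you extract from Lemma \ref{comp1}. That lemma, applied with the Hypothesis \ref{hyp1} function $u$, gives
\[
\int_0^T \int_{\mathbb{T}^d} (\partial_t m_\epsilon -  \nu\Delta m_\epsilon)u + \frac{1}{\epsilon} \int_{A}k\,m_\epsilon \geq 0,
\]
i.e.\ a \emph{lower} bound on $\frac{1}{\epsilon}\int_A k\,m_\epsilon$; such a bound cannot ``control'' $\frac{1}{\epsilon}\int_A m_\epsilon$ from above, which is exactly what you need both for the uniform $L^2((0,T),H^1)$ bound and for $m=0$ on $A$. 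What the argument requires is the \emph{identity} obtained by exploiting the equality $u = Mu$ on $A$: multiplying (\ref{onejump}) by $u$ and changing variables, $u(t,x)-u(t,x+\xi)=k(x,\xi)$ on $A$ yields
\[
\int_0^T (-\partial_t u -  \nu\Delta u, m_\epsilon)_{H^{-1}\times H^1} - \int_{\mathbb{T}^d} u(0)\,m_0 \;=\; -\frac{1}{\epsilon}\int_A k\,m_\epsilon ,
\]
so that $D(m_\epsilon)$ coincides with a functional \emph{linear} in $m_\epsilon$ (this is the paper's opening observation). Only this gives $-D(m_\epsilon) = \frac{1}{\epsilon}\int_A k\,m_\epsilon \leq C(1+\|m_\epsilon\|_{L^2(H^1)})$, which lets Lemma \ref{estimate1} close the bound by absorption, shows $(D(m_\epsilon))_\epsilon$ is bounded, and hence forces $\int_A m_\epsilon \leq C\epsilon/k_0 \to 0$. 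As literally written, your step 2 does not close; the fix is available to you since you note $u=Mu$ on $A$, but you then deliberately weaken it to $u\leq Mu$ in order to quote Lemma \ref{comp1}, and the weakened statement is insufficient.

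The final identification is also much simpler than what you sketch, and your proposed ingredients there are off-track. The term $\frac{1}{\epsilon}\int_A (u-Mu)\,m_\epsilon$ whose limit worries you is identically zero, because $u-Mu\equiv 0$ on $A$ and the penalization is supported on $A$; and Proposition \ref{weakqvi} plays no role at this stage (it is used inside the proof of Lemma \ref{estimate1}, not in the limit passage). The paper's mechanism is: at each fixed $\epsilon$, the identity above for $u$ together with Lemma \ref{comp1} applied to an admissible $v$ give
\[
\int_0^T (-\partial_t u -  \nu\Delta u, m_\epsilon)_{H^{-1}\times H^1} - \int_{\mathbb{T}^d} u(0)m_0 \;\leq\; \int_0^T (-\partial_t v -  \nu\Delta v, m_\epsilon)_{H^{-1}\times H^1} - \int_{\mathbb{T}^d} v(0)m_0 ,
\]
both sides being continuous linear functionals of $m_\epsilon$ on $L^2((0,T),H^1)$, so the inequality survives weak convergence; taking the infimum over admissible $v$ then gives $D(m)\geq$ the $u$-pairing, while admissibility of $u$ itself ($u=Mu\Rightarrow u\leq Mu$) gives the reverse inequality, whence the claimed equality $D(m)=\int_0^T(-\partial_t u -\nu\Delta u,m)-\int u(0)m_0$. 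No trace condition, no regularity beyond $m_\epsilon, u, v \in H$, and no complementarity-in-the-limit argument is needed.
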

\begin{Rem}
Let us note that the trace condition $m(0) = m_0$ is not satisfied, we send the reader to the paragraph following the proof for the interpretation of this fact.
\end{Rem}

\begin{proof}
We define $m_{\epsilon}$ for all $\epsilon > 0$ as the unique solution of
\[
\begin{cases}
\partial_t m_{\epsilon} -  \nu \Delta m_{\epsilon} + \frac{1}{\epsilon}\mathbb{1}_A(t,x) m_{\epsilon}(t,x) - \frac{1}{\epsilon}\mathbb{1}_A(t,x - \xi) m_{\epsilon}(t,x- \xi) = 0 \text{ in } (0,T) \times \mathbb{T}^d ;\\
m_{\epsilon}(0) = m_0 \text{ in } \mathbb{T}^d.
\end{cases}
\]
For all $\epsilon > 0$, in view of lemma $1.1$, we can observe that 
\begin{equation*}
-\infty < D(m_{\epsilon}) = \int_0^T (-\partial_tu -  \nu\Delta u, m_{\epsilon})_{H^{-1} \times H^1} - \int_{\mathbb{T}^d}u(0)m_0;
\end{equation*}
where $u \in L^{\infty} \cap L^2((0,T), H^1(\mathbb{T}^d))$ is given by hypothesis $1$. Hence we deduce from Lemma $1.2$ that $(m_{\epsilon})_{\epsilon > 0}$ is a bounded sequence of $L^2((0,T), H^1(\mathbb{T}^d))$ and as a consequence, that $(D(m_{\epsilon}))_{\epsilon > 0}$ is also a bounded sequence. Thus, extracting a subsequence if necessary, $(m_{\epsilon})_{\epsilon > 0}$ converges weakly toward a limit $m \in L^2((0,T), H^1(\mathbb{T}^d))$. Now take any $v \in H$ such that $v \leq Mv$ on $A$ and any $u\in H$ such that $u = Mu$ on $A$ and $u(T) = v(T) = 0$. For all $\epsilon > 0$ :
\[
\int_0^T (\partial_t m_{\epsilon},u)_{H^{-1} \times H^1} +  \nu \int_0^T \int_{\mathbb{T}^d} \nabla m_{\epsilon} \nabla u \leq \int_0^T (\partial_t m_{\epsilon},v)_{H^{-1} \times H^1} + \nu \int_0^T \int_{\mathbb{T}^d} \nabla m_{\epsilon} \nabla v.
\]
Thus we deduce that
\[
\begin{aligned}
\int_0^T (-\partial_t u - \nu \Delta u, m_{\epsilon})_{H^{-1} \times H^1}  - \int_{\mathbb{T}^d} u(0) m_0\\
\leq \int_0^T (-\partial_t v -  \nu\Delta v, m_{\epsilon})_{H^{-1} \times H^1} - \int_{\mathbb{T}^d}v(0) m_0.
\end{aligned}
\]
Passing to the limit $\epsilon$ goes to $0$ we deduce :
\[
\begin{aligned}
&\int_0^T (-\partial_t v -  \nu \Delta v, m)_{H^{-1} \times H^1} - \int_{\mathbb{T}^d}v(0)m_0 \\
&\geq \int_0^T (-\partial_t u -   \nu \Delta u, m)_{H^{-1} \times H^1} - \int_{\mathbb{T}^d}u(0)m_0.
\end{aligned}
\]
In particular,
\[
D(m) > -\infty.
\]
Let us note that for all $\epsilon > 0$
\[
D(m_{\epsilon}) = - \frac{1}{\epsilon} \int_A k m_{\epsilon} \leq - \frac{1}{\epsilon} \int_A k_0 m_{\epsilon}.
\]
Thus, $m = 0$ almost everywhere on $A$ because $(D(m_{\epsilon}))_{\epsilon}$ is bounded.
\end{proof}

\subsection{Interpretation of the limit density}
From a variational point of view,  the properties of the limit density $m$ given in this theorem are characterizing what we expect for such a density. Indeed, as we mentioned earlier, $D(m) > -\infty$ stands for the fact that $m$ is an admissible density for describing jumping particles. The condition $m = 0$ on $A$ stands for the fact that $m$ is a density of particles which are actually jumping on $A$ because otherwise there will be particles on $A$. The condition :
\begin{equation}\label{propD}
\begin{cases}
\forall u \in H, u = Mu \text{ on } A , u(T) = 0:\\
D(m) = \int_0^T (-\partial_t u -  \nu\Delta u,m)_{H^{-1} \times H^1} - \int_{\mathbb{T}^d} u(0) m_0;
\end{cases}
\end{equation}
stands fromally for the fact those particles are not jumping elsewhere than on $A$. Indeed at a penalized level we know that
\[
D(m_{\epsilon}) = -\frac{1}{\epsilon} \int_{A} km_{\epsilon}.
\]
Thus the quantity $D(m)$ is closely related to the set $A$ on which the particles actually jump. It appears that the quantity $D(m)$ measures the total aggregate costs "paid" by all the particles which jump. Because $m= 0$ on $A$, we know that the particles jump at least on $A$. Formally $(\ref{propD})$ states that the particles do not jump elsewhere than on $A$ ; because the total "cost" is minimum for particles which jump on at least $A$. To understand why we state that (\ref{propD}) stands for the fact that the particles do not jump elsewhere than on $A$, let us look at an example : we take $B$ a measurable subset of $\mathbb{T}^d$ which satisfies hypothesis $1$ and such that $A \subset B$. We define $µ_{\epsilon}$ by :
\[
\begin{cases}
\partial_t µ_{\epsilon} -  \nu\Delta µ_{\epsilon} + \frac{1}{\epsilon} \mathbb{1}_{B}µ_{\epsilon} - \frac{1}{\epsilon} (\mathbb{1}_{B}µ_{\epsilon})(t,x- \xi) = 0; \\
µ(0) = m_0.
\end{cases}
\]
Letting $\epsilon$ go to $0$, recalling the previous theorem, $µ$ satisfies $D(µ) > -\infty$ and $µ = 0$ on $A$ because $µ = 0$ on $B$. What is differentiating $µ$ from $m$ is that, for $v \in H, v \leq Mv, v = Mv$  on  $A$, we do not necessary have an equality in :
\[
D(µ) \leq \int_0^T (\partial_t µ,v)_{H^{-1} \times H^1} + \nu \int_0^T \int_{\mathbb{T}^d} \nabla µ \nabla v;
\]
if $v \ne Mv$ on $B$.\\
\\
Finally, let us note that the initial condition $m(0) = m_0$ may not be satisfied. This is a consequence of the fact that if $A$ is not negligible near $\{t = 0\}$, then particles are jumping instantaneously. We cannot expect in such a case for the initial condition to be satisfied. However the variational relation satisfied by $m$ is sufficient to "remember" that the density starts from $m_0$. In general, there is no time regularity for the solution $m$. This discussion leads to the following definition :

\begin{Def}\label{def1}
For any positive $m_0 \in L^2(\mathbb{T}^d)$, $A$ measurable subset of $\mathbb{T}^d$, $m \in L^2((0,T), H^1(\mathbb{T}^d))$ is called a solution of the Fokker-Planck equation of particles jumping on $A$ if 
\begin{itemize}
\item $m = 0$ on $A$;
\item \[
\begin{aligned}
\forall v, u\in H, v \leq Mv \text{ on } A, u = Mu \text{ on } A, v(T) = 0, u(T) = 0 :\\
 \int_0^T (-\partial_t (u-v) -  \nu\Delta (u-v),m)_{H^{-1} \times H^1} - \int_{\mathbb{T}^d} (u-v)(0) m_0 \leq 0.
\end{aligned}
\]
\end{itemize}
\end{Def}
Even though we just explain in which extent this definition is legitimate, the following section on the uniqueness, despite being a bit more restrictive on the set $A$, justifies this choice of definition.

\subsection{Uniqueness of the limit density}
We now discuss the uniqueness of such solutions. We state that the uniqueness holds under certain assumptions on the set $A$. Let us note that it is classical to have some assumptions on the domain in the study of parabolic PDE in time dependent domain, see \citep{gianazza1996abstract, calvo2017parabolic}. We make here the following assumption :
\begin{hyp}\label{hypunicite}
The set $A$ is either a closed set with Lipschitz boundary such that $\{T\} \times ( \mathbb{T}^d \cap A)$ is an open set with Lipschitz boundary, or it is non-decreasing in time (for the inclusion).
\end{hyp}

Our main argument is that the uniqueness of solutions of the Fokker-Planck equation can be deduced from an existence result for an "adjoint" equation. Under hypothesis \ref{hypunicite}, we have the following result :

\begin{Lemma}\label{lemmeunicite1}
Assume hypothesis \ref{hyp1} and \ref{hypunicite} hold and take $(k,u)$ given by hypothesis \ref{hyp1}. There exists $\epsilon > 0$ such that for any $f \in L^{\infty}((0,T) \times \mathbb{T}^d)$ $||f||_{L^{\infty}} \leq \epsilon$, there exists $v \in H$ such that, for all $µ \in L^2((0,T), H^1(\mathbb{T}^d))$, $µ = 0$ on $A$ : 
\[
\begin{cases}
\int_0^T(-\partial_t v -  \nu\Delta v,µ)_{H^{-1} \times H^1} = \int_0^T \int_{\mathbb{T}^d} (-\partial_t u -  \nu\Delta u + f,µ)_{H^{-1} \times H^1}  ;\\
v = M(k,v) \text{ on } A ;\\
v(T) = u(T).
\end{cases}
\]
\end{Lemma}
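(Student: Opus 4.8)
The plan is to linearise the problem around $u$ by setting $v = u + w$ and to construct $w$ as the solution of an adjoint backward parabolic problem posed on the (time-dependent) complement $A^c$. Since $u$ is given by hypothesis \ref{hyp1} and satisfies $u(t,x) = k(x,\xi) + u(t,x+\xi)$ on $A$ together with $u(T)=0$, the three requirements on $v$ translate into requirements on $w$. The terminal condition $v(T) = u(T)$ becomes $w(T) = 0$. Because the first identity only tests against $\mu$ vanishing on $A$, and such $\mu \in L^2((0,T),H^1)$ also have $\nabla \mu = 0$ a.e. on $A$, subtracting the equation for $u$ shows it is equivalent to
\[
\int_0^T \left( -\partial_t w - \nu \Delta w, \mu \right)_{H^{-1}\times H^1} = \int_0^T \int_{\mathbb{T}^d} f\, \mu, \qquad \forall\, \mu = 0 \text{ on } A,
\]
i.e. $-\partial_t w - \nu\Delta w = f$ holds weakly on $A^c$. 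Finally, using the relation satisfied by $u$ on $A$, the obstacle saturation $v = M(k,v)$ on $A$ reduces, for the relevant jump $\xi$, to the \emph{linear} matching condition $w(t,x) = w(t,x+\xi)$ on $A$.

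I would then construct $w$ by solving the backward heat equation $-\partial_t w - \nu\Delta w = f$ on the time-dependent domain $A^c$, with terminal datum $w(T) = 0$ and with the coupling condition $w(t,\cdot) = w(t,\cdot + \xi)$ on $A$ (equivalently $w(t,x) = w(t, x + n(x)\xi)$, the value at the exit point of the $\xi$-chain) playing the role of the lateral condition on $\partial A$. This is exactly the type of parabolic problem on a time-dependent domain analysed in the appendix, and it is precisely here that hypothesis \ref{hypunicite} enters: the Lipschitz regularity of $\partial A$ (with the nice terminal slice) in the first alternative, or the monotonicity in time in the second, are what guarantee that $A^c$ is regular enough for this backward problem to admit a solution $w \in H$. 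With $f \in L^\infty$ and $w(T)=0$, standard parabolic energy estimates then give $w \in H = L^2((0,T),H^1) \cap H^1((0,T),H^{-1})$, and hence $v = u + w \in H$.

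\textbf{From the single jump to $M$.} It remains to upgrade the matching relation into the full obstacle equality $v = M(k,v) = \inf_{\xi' \in K}\{ k(\cdot,\xi') + v(\cdot+\xi')\}$ on $A$, and this is where the smallness $\|f\|_{L^\infty}\le \epsilon$ is used. By construction $v$ satisfies $v(t,x) = k(x,\xi) + v(t,x+\xi)$ on $A$, so $v \geq M(k,v)$ there, with equality as soon as the jump $\xi$ realises the infimum for $v$, which it does for $u$. A parabolic estimate bounds $\|w\|_{L^\infty}$ (hence $\|v - u\|_{L^\infty}$) linearly in $\|f\|_{L^\infty}$, so choosing $\epsilon$ small enough keeps $\xi$ the minimiser and yields $v = M(k,v)$ on $A$.

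\textbf{Main obstacle.} The delicate point is the solvability of the backward problem for $w$ on the time-dependent domain $A^c$ with the nonlocal condition on $\partial A$: one must make sense of, and solve, a parabolic equation whose lateral condition couples the trace of $w$ on $\partial A$ to its interior values through the jump map $x \mapsto x + n(x)\xi$, and one must secure enough regularity of $A^c$ for the trace and energy estimates to close. Hypothesis \ref{hypunicite} is tailored to exactly this difficulty, and I expect the bulk of the work (via the appendix on the heat equation in a time-dependent domain, alongside the QVI existence of proposition \ref{weakqvi}) to be concentrated there; the reduction $v = u+w$ and the smallness argument are comparatively routine.
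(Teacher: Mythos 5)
Your reduction $v = u + w$ is sound as far as it goes: hypothesis \ref{hyp1} does place $u$ in $H$, so subtracting its equation from the weak formulation is legitimate, and the three requirements on $v$ do become $w(T)=0$, the backward heat equation for $w$ tested against $\mu$ vanishing on $A$, and the matching condition $w(t,x)=w(t,x+\xi)$ on $A$. The genuine gap is exactly at what you yourself call the ``main obstacle'': you never construct $w$, and the results you point to cannot do it for you. The paper's entire proof of this lemma is a citation to theorem \ref{existenceapp} of appendix B, which (after time reversal) is precisely the missing statement: existence of a heat-equation solution on the time-dependent domain $B=A^c$ whose lateral values on $A$ are coupled to its \emph{own} interior values through the jump map. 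Proposition \ref{pexistenceapp} and theorem \ref{uniqueapp} treat only prescribed Dirichlet data $\phi$, and proposition \ref{weakqvi} produces weak QVI solutions with no control of the equation inside $B$; none of them handles a boundary condition that feeds the solution back to itself. Linearizing does not dissolve this nonlocality: to solve your problem for $w$ one still needs a fixed-point/iteration argument --- e.g. the map $\tilde w \mapsto$ (solution in $B$ with data $\tilde w(\cdot,\cdot+\xi)$ on $A$ and terminal datum $0$), which is monotone by the maximum principle and admits the explicit barriers $\pm\|f\|_{L^\infty}(T-t)$ --- and that is exactly the mechanism of the paper's proof of theorem \ref{existenceapp}, where $\mathcal{T}(v)$ solves the heat equation in $B$ with data $Mv$ on $A$ and the invariant order interval comes from two auxiliary QVI solutions $v_1\leq v_2$. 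So your plan is salvageable (and in the single-jump case your barriers would be simpler than the paper's QVI sub/supersolutions), but as written the decisive step is named rather than proved.

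There is a second, smaller gap in your smallness argument. The claim that $\xi$ ``realises the infimum for $v$, which it does for $u$,'' and that $L^\infty$-smallness of $w$ ``keeps $\xi$ the minimiser,'' uses two facts that hypothesis \ref{hyp1} does not provide: that $u\leq M(k,u)$ at all (hypothesis \ref{hyp1} only gives the equality $u(t,x)=k(x,\xi)+u(t,x+\xi)$ on $A$), and a \emph{uniform} strict gap $k(x,\xi)+u(t,x+\xi)\leq k(x,\xi')+u(t,x+\xi')-\delta$ for $\xi'\neq\xi$ on $A$; without such a gap, no $L^\infty$-small perturbation preserves the minimiser. The paper imports exactly this through the strict-separation lines of (\ref{hypapp}) in theorem \ref{existenceapp}, and its subsequent remark observes that when $K=\{\xi\}$ is a singleton the condition $v=M(k,v)$ is linear and no smallness is needed. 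So either $K=\{\xi\}$ and your last step is vacuous (with $\epsilon$ playing no role), or $K$ has several elements and your step fails unless you add the separation hypothesis explicitly.
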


\begin{proof}
See appendix B, theorem \ref{existenceapp}.
\end{proof}
We are now able to prove the following result :
\begin{Theorem}\label{thmunicite1}
Assume hypothesis \ref{hyp1} and \ref{hypunicite} hold, for any non negative $m_0 \in L^2(\mathbb{T}^d)$ there exists at most one $m \in L^2((0,T), H^1(\mathbb{T}^d))$ solution of the problem in the sense of definition \ref{def1}.
\end{Theorem}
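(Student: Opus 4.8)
The plan is to argue by duality, using the adjoint solution produced by Lemma \ref{lemmeunicite1}. Suppose $m_1,m_2 \in L^2((0,T),H^1(\mathbb{T}^d))$ are two solutions in the sense of Definition \ref{def1} and set $\delta m = m_1 - m_2$. Since both $m_1$ and $m_2$ vanish on $A$, so does $\delta m$; in particular $\delta m$ is an admissible test function $\rho$ in Lemma \ref{lemmeunicite1}. Fix the pair $(k,u)$ given by hypothesis \ref{hyp1}, so that $u \in H$ satisfies $u = Mu$ on $A$ and $u(T)=0$, and let $\epsilon>0$ be as in Lemma \ref{lemmeunicite1}.

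The first step is a symmetrization of the defining inequality. For $f \in L^{\infty}$ with $||f||_{L^{\infty}} \leq \epsilon$, let $v = v_f \in H$ be the adjoint function provided by Lemma \ref{lemmeunicite1}; it satisfies $v = M(k,v)$ on $A$ and $v(T) = u(T) = 0$. Because $v$ realizes the \emph{equality} $v = Mv$ on $A$ (hence also $v \leq Mv$), the pair $(u,v)$ is admissible in Definition \ref{def1}; but for the same reason $u \leq Mu$, so the pair $(v,u)$, with the roles of the equality- and inequality-constrained functions exchanged, is admissible as well. Writing $Q(m) := \int_0^T (-\partial_t(u-v) - \nu\Delta(u-v), m)_{H^{-1}\times H^1} - \int_{\mathbb{T}^d}(u-v)(0)m_0$, the ordering $(u,v)$ gives $Q(m)\leq 0$ while the ordering $(v,u)$ gives $-Q(m)\leq 0$; hence $Q(m) = 0$ for every solution $m$.

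The second step combines this identity with the adjoint relation. Applying $Q(m)=0$ to $m_1$ and to $m_2$ and subtracting, the $m_0$-terms cancel and leave
\[
\int_0^T (-\partial_t(u-v) - \nu\Delta(u-v), \delta m)_{H^{-1}\times H^1} = 0.
\]
On the other hand, testing the relation of Lemma \ref{lemmeunicite1} against $\rho = \delta m$ (legitimate since $\delta m = 0$ on $A$) and rearranging gives
\[
\int_0^T (-\partial_t(u-v) - \nu\Delta(u-v), \delta m)_{H^{-1}\times H^1} = -\int_0^T \int_{\mathbb{T}^d} f\,\delta m.
\]
Comparing the two displays yields $\int_0^T \int_{\mathbb{T}^d} f\,\delta m = 0$ for every $f$ with $||f||_{L^{\infty}} \leq \epsilon$. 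Choosing $f = \epsilon\,\operatorname{sign}(\delta m)$ forces $\int_0^T\int_{\mathbb{T}^d}|\delta m| = 0$, hence $\delta m = 0$ and $m_1 = m_2$.

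The genuine difficulty is not in this wrap-up but is hidden in Lemma \ref{lemmeunicite1}: producing, for every small forcing $f$, an $H$-regular solution $v$ of the adjoint quasi variational inequality with the prescribed equality $v = Mv$ on $A$ and matching terminal data, which is exactly where the geometric hypothesis \ref{hypunicite} on $A$ is needed and which is deferred to Appendix B. Granting that lemma, the only delicate point in the argument above is the symmetrization step, which crucially exploits that the adjoint $v$ satisfies the \emph{equality} $v = Mv$ on $A$ rather than merely the inequality, so that it may legitimately serve as the equality-constrained test function in Definition \ref{def1} and thereby upgrade the one-sided inequality into the identity $Q(m)=0$.
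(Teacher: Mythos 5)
Your proof is correct and follows essentially the same duality argument as the paper: both exploit Definition \ref{def1} with the roles of the two equality-constrained test functions swapped to obtain the identity $Q(m)=0$, and then test the adjoint relation of Lemma \ref{lemmeunicite1} against $\delta m$ (legitimate since $\delta m=0$ on $A$) to conclude that $\delta m$ is orthogonal to every small $f\in L^{\infty}$, hence zero. The only cosmetic difference is that the paper uses two adjoint solutions $v_1,v_2$ with forcings $f_1,f_2$ (yielding $\int_0^T\int_{\mathbb{T}^d}(f_1-f_2)\,\delta m=0$), whereas you pair a single adjoint $v_f$ with the hypothesis-\ref{hyp1} function $u$ itself, which amounts to the choice $f_2=0$, $v_2=u$ in the paper's argument.
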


\begin{proof}
We denote by $m_1$ and $m_2$ two solutions. The idea of the proof consists in constructing for $i= 1,2$, $v_i \in H$ as in lemma \ref{lemmeunicite1} with respective second term $f = f_1$ and $f= f_2$, where $||f_1||_{L^{\infty}},||f_2||_{L^{\infty}} \leq \epsilon$. We then evaluate
\[
\int_0^T (-\partial_t (v_1 - v_2) - \nu \Delta (v_1 - v_2),m_1 - m_2)_{H^{-1}\times H^1}.
\]
This proof is the adaptation of the uniqueness proof we give for the MFG problem at the end of this paper, which is itself an adaptation of the proof of uniqueness of J.-M. Lasry and P.-L. Lions \citep{lasry2007mean}. Because $m_1$ and $m_2$ are solutions, the following holds:
\[
D(m_j) = \int_0^T (-\partial_t (v_i) - \nu \Delta (v_i),m_j)_{H^{-1}\times H^1} - \int_{\mathbb{T}^d}v_i(0)m_0.
\]
Hence,
\begin{equation}\label{intermediate}
\int_0^T (-\partial_t (v_1 - v_2) - \nu \Delta (v_1 - v_2),m_1 - m_2)_{H^{-1}\times H^1} = 0.
\end{equation}
On the other hand, using lemma \ref{lemmeunicite1}, we derive that
\[
\int_0^T (-\partial_t (v_1 - v_2) -  \nu\Delta (v_1 - v_2),m_1 - m_2)_{H^{-1}\times H^1} = \int_0^T\int_{\mathbb{T}^d}(m_1 - m_2) (f_1 - f_2).
\]
Recalling (\ref{intermediate}), we obtain that 
$$\int_0^T\int_{\mathbb{T}^d}(m_1 - m_2) (f_1 - f_2) = 0.$$
Thus we deduce that $m_1 = m_2$ because $m_1 - m_2$ is orthogonal to the ball of center $0$ and radius $2\epsilon$ of $L^{\infty}((0,T) \times \mathbb{T}^d)$.
\end{proof}

\subsection{A remark on the hypothesis for uniqueness}
Even though we are only able to prove the uniqueness of solutions of the Fokker-Planck equation under hypothesis \ref{hypunicite}, we conjecture that uniqueness is a more general property for this equation. Indeed we believe that an approach similar to the one we present in the second part of Appendix B (theorem \ref{uniqueapp}) can be adapted to this situation. The technical difficulty which we have not been able to overcome for the moment is to prove some time regularity for $m$ when the set $A$ is decreasing.\\
\\
However, we hope that the range of applications of hypothesis \ref{hypunicite} is large enough to convince the reader that the notion of solution of the Fokker-Planck equation we present is the correct one.

\section{ The case of a finite number of possible jumps}
We now address a more general model as we look at situations in which different jumps can occur. As we shall see, all the results of the case of a single jump are adaptable to the case of a finite number of jumps. However there are in this section more notations and we advise not to read this section before the previous one. We denote by $K \subset \mathbb{T}^d$ the finite set of possible jumps. In this setting a single set $A$ is no longer sufficient to describe all the jumps. We introduce $V$ which describes the jumps by : $ V(\xi, t,x)$ is the proportion of particles which use the jump $\xi$ at $(t,x)$. We assume the following :
\begin{equation}\label{hypV}
\begin{cases}
V \in L^{\infty}(K,(0,T),\mathbb{T}^d) ;\\
V \geq 0 ;\\
\sum_{\xi \in K} V(\xi,t,x) \leq 1.
\end{cases}
\end{equation}
We also define the following sets :
\[
\forall \xi \in K, A_{\xi} := \{ V(\xi, \cdot , \cdot ) > 0 \};
\]
\[
A = \cup_{\xi \in K} A_{\xi}.
\]
As in the case of a single jump, an assumption on the sets on which the particles jump is still needed. We make here the following assumption :
\begin{hyp}\label{hypmultiple}
There exists $k$ satisfying $(\ref{hypk})$ and $u \in H$ such that
\begin{equation}\label{hypu}
\begin{cases}
u \leq M(k,u) \text{ in } (0,T) \times \mathbb{T}^d;\\
u(T) = 0; \\
\forall \xi \in K, V(\xi,t,x) (k(x,\xi) + u(t, x+ \xi) - u(t,x)) = 0 \text{ a.e. in } (0,T) \times \mathbb{T}^d .
\end{cases}
\end{equation}
\end{hyp}
We recall that the set $H$ is defined by $$H := \{ v \in L^2((0,T), H^1(\mathbb{T}^d)), \partial_t v \in L^2((0,T), H^{-1}(\mathbb{T}^d))\}.$$
This hypothesis is slightly more sophisticated than hypothesis $1$. This is due to the fact that multiple jumps being possible, we have to be more precise. We still assume that the sets on which the particles are jumping are given as a result of an impulse control problem but we precise for which $\xi$ the minimum is reached.

\subsection{The penalized equation}
We introduce first a penalized version of the problem. We recall that this penalization models situations in which the particles have a certain probability to jump in the prescribed sets, and that the limit as $\epsilon$ goes to $0$ corresponds to the probability of jumping going to $1$. At this penalized level, we expect the density of particles $m$ to satisfy :

\begin{equation}\label{penalmultiple}
\begin{cases}
\partial_t m_{\epsilon} -  \nu\Delta m_{\epsilon} + \frac{1}{\epsilon} m_{\epsilon}(t,x)\big{(}\sum_{\xi \in K}V(\xi,t,x) \big{)}- \frac{1}{\epsilon} \sum_{\xi \in K} m_{\epsilon}(t,x- \xi) V(\xi,t, x - \xi) = 0\\
\text{ in } (0,T) \times \mathbb{T}^d ;\\
m(0) = m_0 \text{ in } \mathbb{T}^d.
\end{cases}
\end{equation}
We define the set $\mathcal{AD}(k)$ by :
\[
\mathcal{AD}(k) := \{ u \in H, u(T) = 0, \forall \xi \in K, V(t,x,\xi)(u(t,x) - k(x,\xi) - u(t,x + \xi)) \leq 0 \text{ on } A\}.
\]
This set represents the set of admissible solutions of an impulse control problem in which one can only use the jump $\xi \in K$ at $(t,x)$ if $V(t,x,\xi) > 0$. Let us assume that $m$ is a smooth solution of the previous PDE. Then, as in the case of a single jump, for any $u \in \mathcal{AD}(k)$, after a simple change of variable we obtain that :
\[
\begin{aligned}
\int_0^T (\partial_t m -  \nu\Delta m, u)_{H^{-1} \times H^1} &= - \frac{1}{\epsilon} \int_0^T \int_{\mathbb{T}^d}m(t,x) (\sum_{\xi \in K} V(\xi, t,x) (u(t,x) - u(t,x + \xi))) dtdx; \\
& \geq - \frac{1}{\epsilon} \int_0^T \int_{\mathbb{T}^d}m(t,x) (\sum_{\xi \in K} V(\xi, t,x)k(t,x,\xi)) dt dx;
\end{aligned}
\]
This leads us to define the meaningful quantity : 
\begin{equation}\label{D(m)}
D(k,m) := \inf \{ \int_0^T (-\partial_t u - \nu \Delta u, m)_{H^{-1} \times H^1} - \int_{\mathbb{T}^d} m_0 u(0) | u \in \mathcal{AD}(k)  \};
\end{equation}
We introduce the set $\mathcal{H}(k) := \{ m \in L^2((0,T), H^1(\mathbb{T}^d)) , D(k,m) > - \infty\}$. Let us remark that for any $u$ satisfying (\ref{hypu}) :
\[
D(k,m_{\epsilon}) = \int_0^T (\partial_t m_{\epsilon} - \nu \Delta m_{\epsilon}, u)_{H^{-1} \times H^1}.
\]
The proofs of the following lemmata are the exact analogous of the proof we did earlier in the case of a single jump so we do not present them here.

\begin{Lemma}\label{estimate2}
Assume hypothesis \ref{hypmultiple} holds for $(k,u)$. Then there exists $C > 0$ (independent on $m$ and depending on $k$ and $||u||_{L^{\infty}}$) such that for any $ m \in \mathcal{H}(k) \cap H$, with $m \geq 0$ :
\[
||m||^2_{L^2((0,T), H^1)} \leq - D(k,m) + C ||m||_{L^2((0,T), H^1)} ||m_0||_{L^2}.
\]
\end{Lemma}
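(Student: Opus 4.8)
The plan is to follow the proof of Lemma \ref{estimate1} line by line, the only genuinely new ingredient being the multi-jump version of the weak quasi-variational inequality of the appendix. First I would invoke the analog of Proposition \ref{weakqvi} adapted to hypothesis \ref{hypmultiple}: under that hypothesis there exists $\tilde u \in L^2((0,T),H^1(\mathbb{T}^d))$ which is a weak subsolution of the obstacle problem with source $\Delta m$, satisfying for every $\xi \in K$ the constraint $\tilde u(t,x) \le k(x,\xi) + \tilde u(t,x+\xi)$ on $A_\xi$ (so that $\tilde u$ is admissible for the family of obstacles encoded by $V$), together with the weak variational inequality
\[
-\int_0^T\!\!\int_{\mathbb{T}^d} \partial_t v\,(v-\tilde u) + \nu \int_0^T\!\!\int_{\mathbb{T}^d}\nabla \tilde u\cdot\nabla(v-\tilde u) + \tfrac12\int_{\mathbb{T}^d}|v(T)|^2 \ge \int_0^T\!\!\int_{\mathbb{T}^d}(\Delta m)\,(v-\tilde u),
\]
valid for all admissible $v$, and the accompanying a priori estimate on $\|\tilde u\|_{L^\infty((0,T),L^2)}$. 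The key point, exactly as in the single-jump case, is that $\tilde u$ need not belong to $H$, so it cannot be used naively as a test function.

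Next, using $m \in H$ I would integrate by parts in the definition (\ref{D(m)}) of $D(k,m)$ to rewrite it as the infimum of $\int_0^T (\partial_t m - \nu\Delta m, v)_{H^{-1}\times H^1}$ over all $v \in L^2((0,T),H^1(\mathbb{T}^d))$ satisfying the admissibility constraints on the sets $A_\xi$ but without the time-regularity requirement $v \in H$, the boundary term cancelling $\int m_0 u(0)$ just as before. Since $\tilde u$ is such a competitor, this yields $D(k,m) \le \int_0^T(\partial_t m - \nu\Delta m,\tilde u)_{H^{-1}\times H^1}$. Because $\tilde u \notin H$, the naive duality identity $\int_0^T(\partial_t m - \nu\Delta m,\tilde u) = \int_0^T(-\partial_t\tilde u - \nu\Delta\tilde u,m) - \int_{\mathbb{T}^d} \tilde u(0)m_0$ is meaningless; I would instead feed the weak variational inequality above against $m$ to obtain $D(k,m) \le -\nu\int_0^T\!\int_{\mathbb{T}^d}|\nabla m|^2 + \|m_0\|_{L^2}\|\tilde u\|_{L^\infty((0,T),L^2)}$.

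Finally I would rearrange this into $\nu\|\nabla m\|^2_{L^2(L^2)} \le -D(k,m) + \|m_0\|_{L^2}\|\tilde u\|_{L^\infty(L^2)}$ and substitute the a priori bound on $\|\tilde u\|_{L^\infty(L^2)}$ provided by the appendix (linear in $\|m\|_{L^2((0,T),H^1)}$ with a constant depending only on $k$ and $\|u\|_{L^\infty}$), absorbing the $L^2$ part of the $H^1$ norm exactly as in Lemma \ref{estimate1} to reach the stated inequality.

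The main obstacle is not the bookkeeping, which is identical to the single-jump case, but establishing the multi-jump weak quasi-variational inequality together with its estimate: one must check that the appendix construction of $\tilde u$ and its bound survive the replacement of the single obstacle $u \le M(k,u)$ on $A$ by the finite family of constraints on the sets $A_\xi$ dictated by $V$. Given that hypothesis \ref{hypmultiple} supplies a global subsolution $u$ achieving equality precisely on each $A_\xi$, this adaptation should be routine, and the remainder of the argument transfers without change.
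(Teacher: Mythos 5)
Your proposal is correct and follows exactly the paper's intended argument: the paper proves this lemma only by reference, stating that it is the exact analogue of Lemma \ref{estimate1}, and your write-up is precisely that adaptation (rewrite $D(k,m)$ using $m \in H$, insert the weak QVI solution $\tilde u$ with source $\Delta m$ as a competitor, use the weak variational inequality in place of the illegitimate integration by parts, then conclude with the appendix estimate on $\|\tilde u\|_{L^{\infty}(L^2)}$). One remark: the step you flag as the main obstacle requires no work at all, since Proposition \ref{weakqvi} in the appendix is already stated and proved under hypothesis \ref{hypmultiple}, with the constraint set $\mathcal{K}(k,u)$ encoding exactly the family of constraints $\mathbb{1}_{\{V(\xi)>0\}}\bigl(v(t,x)-k(x,\xi)-v(t,x+\xi)\bigr)\leq 0$ for each $\xi \in K$; it is the single-jump Lemma \ref{estimate1} that is obtained by specialization, not the other way around.
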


\begin{Lemma}\label{max2}
Let $m_0 \in L^2(\mathbb{T}^d)$, $m_0 \geq 0$, and $m \in L^2((0,T), H^1(\mathbb{T}^d))$ be a solution of (\ref{penalmultiple}).
Then, $m \geq 0$.
\end{Lemma}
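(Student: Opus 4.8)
The plan is to adapt the proof of Proposition \ref{maxonejump} to the multi-jump setting, the only genuinely new feature being that several shifted reaction terms must be controlled simultaneously. Writing $\lambda_\xi := \frac{1}{\epsilon} V(\xi,\cdot,\cdot) \in L^\infty$, which is nonnegative and, since $K$ is finite and $V \in L^\infty$, satisfies $\Lambda := \big\| \sum_{\xi \in K} \lambda_\xi \big\|_{L^\infty} < \infty$, equation (\ref{penalmultiple}) reads
\[
\partial_t m - \nu \Delta m + m(t,x) \sum_{\xi \in K} \lambda_\xi(t,x) - \sum_{\xi \in K} \lambda_\xi(t,x-\xi)\, m(t,x-\xi) = 0 .
\]
As in Proposition \ref{maxonejump}, I would first establish positivity when $V$ and $m_0$ are smooth with $m_0 > 0$, so that parabolic regularity makes $m$ of class $C^1$ in time and $C^2$ in space, and then remove the smoothness by approximation, invoking the uniqueness of solutions of (\ref{penalmultiple}) --- obtained exactly as in Theorem \ref{penalizedthm} by a Gronwall argument on $\|\delta m(t)\|_{L^2}^2$ --- to pass to the limit and conclude with only $m_0 \geq 0$.

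In the smooth case I would argue by contradiction. Assuming $m$ takes a negative value, set $\tilde m(t,x) := m(t,x) + \delta e^{\beta t}$ with $\delta > 0$ small and $\beta > \Lambda$ to be fixed, so that $\tilde m > 0$ near $t=0$ while $\tilde m$ becomes negative somewhere. Let $(t_1,x_1)$ be a first touching point: $\tilde m \geq 0$ on $[0,t_1] \times \mathbb{T}^d$ and $\tilde m(t_1,x_1) = 0$. This forces $\partial_t \tilde m(t_1,x_1) \leq 0$ and $\Delta \tilde m(t_1,x_1) \geq 0$, and --- crucially --- that $x_1$ is a spatial minimizer of $m(t_1,\cdot)$, so that $m(t_1, x_1 - \xi) \geq m(t_1,x_1) = -\delta e^{\beta t_1}$ for every $\xi \in K$.

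Evaluating $\partial_t \tilde m = \delta \beta e^{\beta t} + \nu \Delta m - m \sum_\xi \lambda_\xi + \sum_\xi \lambda_\xi(\cdot-\xi)\, m(\cdot-\xi)$ at $(t_1,x_1)$, the diffusive term is $\geq 0$, the outgoing term equals $\delta e^{\beta t_1} \sum_\xi \lambda_\xi(t_1,x_1) \geq 0$, and the incoming term is bounded below, using $\lambda_\xi \geq 0$ together with the minimality of $x_1$, by $\sum_\xi \lambda_\xi(t_1,x_1-\xi)\, m(t_1,x_1-\xi) \geq -\delta e^{\beta t_1} \sum_\xi \lambda_\xi(t_1,x_1-\xi) \geq -\delta e^{\beta t_1} \Lambda$. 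Hence $\partial_t \tilde m(t_1,x_1) \geq \delta e^{\beta t_1}(\beta - \Lambda) > 0$, contradicting $\partial_t \tilde m(t_1,x_1) \leq 0$. This yields $m \geq 0$ in the smooth case, after which the approximation argument finishes the proof.

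I expect the main obstacle to be precisely the incoming term $\sum_\xi \lambda_\xi(t_1,x_1-\xi)\, m(t_1,x_1-\xi)$: since its coefficients are evaluated at the shifted points $x_1-\xi$ rather than at $x_1$, it cannot be absorbed into the outgoing term, and must instead be controlled through the nonnegativity of the $\lambda_\xi$ and the spatial minimality of $x_1$. This is exactly what dictates using an exponential weight and the choice $\beta > \Lambda$, which cleanly dominates the total jump rate; a purely linear perturbation $\delta t$ only gives $\partial_t \tilde m(t_1,x_1) \geq \delta(1 - t_1 \Lambda)$, which is no longer conclusive once $t_1 \Lambda \geq 1$.
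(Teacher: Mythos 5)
Your proposal follows the same route as the paper: the paper's proof of this lemma is just a pointer to the single-jump Proposition \ref{maxonejump} (first-touching-point argument for smooth data, then removal of smoothness via uniqueness and approximation), and that is exactly what you adapt, the sum over $\xi\in K$ causing no difficulty since every incoming term is handled by the sign of $\lambda_\xi$ and the spatial minimality of $x_1$. The one genuine difference is your barrier: you use $\delta e^{\beta t}$ where the paper uses $\delta t$, and this is not cosmetic but a real repair. With the linear barrier, the incoming term at the touching point is only bounded below by $-\delta t_1 \sum_\xi \lambda_\xi(t_1,x_1-\xi)$, so the paper's asserted conclusion $\partial_t \tilde m(t_1,x_1)>0$ actually follows only when $t_1$ times the jump rate is less than $1$, i.e. roughly $t_1<\epsilon$ here; for later touching times the paper's computation as written is inconclusive (one would have to iterate over short time intervals). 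Your exponential weight with $\beta$ dominating the jump rate closes this in one stroke, exactly as you point out in your final paragraph.

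One slip to correct: your constant $\Lambda=\bigl\|\sum_{\xi\in K}\lambda_\xi\bigr\|_{L^\infty}$ does not bound $\sum_{\xi\in K}\lambda_\xi(t_1,x_1-\xi)$, because each coefficient is evaluated at a different shifted point $x_1-\xi$. For instance, two jumps with disjointly supported $V(\xi_1,\cdot),V(\xi_2,\cdot)$ give $\Lambda=1/\epsilon$ while $\lambda_{\xi_1}(t_1,x_1-\xi_1)+\lambda_{\xi_2}(t_1,x_1-\xi_2)$ can equal $2/\epsilon$. Replace $\Lambda$ by $\sum_{\xi\in K}\|\lambda_\xi\|_{L^\infty}\leq \#(K)/\epsilon$, which is finite since $K$ is finite and $0\leq V\leq 1$, and require $\beta$ strictly larger than this quantity; every other step of your argument then goes through unchanged.
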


Furthermore, as in the case of a single jump, we can prove the following result.
\begin{Theorem}
Assume hypothesis \ref{hypmultiple} holds, then for all $m_0 \in L^2(\mathbb{T}^d)$ there exists a unique solution $m \in H$ of the penalized equation (\ref{penalmultiple}).
\end{Theorem}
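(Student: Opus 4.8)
The plan is to follow verbatim the architecture of the proof of Theorem \ref{penalizedthm}: existence will be obtained through Schaefer's fixed point theorem fed by the a priori estimate of Lemma \ref{estimate2}, while uniqueness will follow from a Gronwall argument on the difference of two solutions. First I would freeze the coupling and define the map $\mathcal{F}:L^2((0,T)\times\mathbb{T}^d)\to L^2((0,T)\times\mathbb{T}^d)$ sending $m$ to the unique solution $\mathcal{F}(m)\in H$ of the linear heat equation $\partial_t\mathcal{F}(m)-\nu\Delta\mathcal{F}(m)=g_m$ with initial datum $m_0$, where $g_m(t,x)=\frac{1}{\epsilon}\sum_{\xi\in K}\big(m(t,x-\xi)V(\xi,t,x-\xi)-m(t,x)V(\xi,t,x)\big)$ is the zeroth-order source evaluated at the frozen argument. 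Since $g_m\in L^2$ depends continuously and linearly on $m$ and the heat solution operator gains regularity, $\mathcal{F}$ is continuous and, by the compact embedding of the parabolic energy space into $L^2$, compact.

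To apply Schaefer's theorem I must bound, uniformly in $\theta\in[0,1]$, the set of $m$ satisfying $m=\theta\mathcal{F}(m)$. Such an $m$ solves (\ref{penalmultiple}) with the coupling terms scaled by $\theta$, hence by Lemma \ref{max2} (which is insensitive to this scaling) it is nonnegative. Testing the equation against the function $u$ furnished by hypothesis \ref{hypmultiple}, a change of variables in the jump terms together with the admissibility inequalities defining $\mathcal{AD}(k)$, namely $V(\xi,\cdot,x)(u(x)-u(x+\xi))\leq V(\xi,\cdot,x)k(x,\xi)$, and the positivity of $m$ yield, exactly as in the single-jump case, the lower bound $D(k,m)\geq -\frac{\theta}{\epsilon}\int_0^T\int_{\mathbb{T}^d}m\sum_{\xi\in K}V(\xi,\cdot,\cdot)k(\cdot,\xi)$, and hence $-D(k,m)\leq \frac{C}{\epsilon}\|m\|_{L^2}$ by Cauchy--Schwarz together with (\ref{hypV}) and (\ref{hypk}). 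Inserting this into Lemma \ref{estimate2} gives $\|m\|_{L^2((0,T),H^1)}^2\leq \frac{C}{\epsilon}\|m\|_{L^2((0,T),H^1)}+C\|m\|_{L^2((0,T),H^1)}\|m_0\|_{L^2}$, whose right-hand side is linear in the norm appearing on the left; the standard comparison between the quadratic and linear sides then produces a bound independent of $\theta$. Schaefer's theorem delivers a fixed point of $\mathcal{F}$, i.e. a solution of (\ref{penalmultiple}) in $H$.

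For uniqueness I would set $\delta m=m_1-m_2$, which solves (\ref{penalmultiple}) with zero initial datum, multiply by $\delta m$ and integrate in space. The outgoing term $\frac{1}{\epsilon}\int(\sum_\xi V(\xi))(\delta m)^2$ is nonnegative and may be discarded, while the incoming term, after the change of variables $x\mapsto x+\xi$, becomes $\frac{1}{\epsilon}\int\sum_\xi V(\xi,\cdot,x)\,\delta m(x)\,\delta m(x+\xi)$; applying Young's inequality and using $\sum_{\xi\in K}V(\xi,\cdot,\cdot)\leq 1$ and $V\geq 0$ from (\ref{hypV}) bounds this by $\frac{C}{\epsilon}\|\delta m(t)\|_{L^2(\mathbb{T}^d)}^2$. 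One thus obtains $\frac{d}{dt}\|\delta m(t)\|_{L^2}^2\leq \frac{C}{\epsilon}\|\delta m(t)\|_{L^2}^2$, and Gronwall's lemma forces $\delta m\equiv 0$. The only genuinely delicate point, as in the single-jump argument, is securing the $\theta$-uniform bound in the Schaefer step: everything hinges on the fact that $-D(k,m)$ is controlled linearly in $\|m\|$, which is precisely the content of Lemma \ref{estimate2} combined with the admissible test function of hypothesis \ref{hypmultiple}. The sum over the finite set $K$ introduces only bookkeeping, not new difficulties, thanks to the normalization $\sum_{\xi\in K}V(\xi,\cdot,\cdot)\leq 1$.
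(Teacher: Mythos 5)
Your proof is correct and follows exactly the route the paper intends: the paper omits the proof of this theorem, stating only that it is proved ``as in the case of a single jump,'' and your argument is precisely the faithful adaptation of the proof of Theorem \ref{penalizedthm} (Schaefer's fixed point theorem with the frozen-coupling map $\mathcal{F}$, nonnegativity from the maximum principle, the $\theta$-uniform bound via Lemma \ref{estimate2} and the test function of hypothesis \ref{hypmultiple}, and Gronwall for uniqueness) to the finite-jump setting.
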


\subsection{The limit density}
We now present how we can pass to the limit in (\ref{penalmultiple}) using the previous result. As in the case of a single jump, the existence follows from lemma \ref{estimate2} and we prove uniqueness under a more restrictive assumption to avoid technical difficulties.

\begin{Theorem}
If there exists $k$ satisfying (\ref{hypk}) such that hypothesis \ref{hypmultiple} holds, then there exists $m \in L^2((0,T), H^1(\mathbb{T}^d))$ such that 

\[
\begin{cases}
D(k,m) > -\infty ;\\
m = 0 \text{ in } A;
\end{cases}
\]
and, for all $v \in \mathcal{AD}(k) $ :
\[
\int_0^T (-\partial_t v -  \nu\Delta v, m )_{H^{-1} \times H^1} - \int_{\mathbb{T}^d} v(0)m_0 \geq \int_0^T (-\partial_t u - \nu \Delta u, m)_{H^{-1} \times H^1} - \int_{\mathbb{T}^d}u(0)m_0 = D(k,m);
\]
where $u\in H$ satisfies $(\ref{hypu})$.
\end{Theorem}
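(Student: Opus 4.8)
The plan is to reproduce, in the multi-jump setting, the argument used for Theorem~\ref{limitsinglejump}, the only new ingredient being the bookkeeping forced by the family $(A_\xi)_{\xi\in K}$ and the function $V$. First I would invoke the existence theorem for the penalized equation~(\ref{penalmultiple}) to define, for each $\epsilon>0$, the unique solution $m_\epsilon\in H$ with $m_\epsilon(0)=m_0$, and record that $m_\epsilon\geq 0$ by Lemma~\ref{max2}. Everything then hinges on a bound on $(m_\epsilon)_\epsilon$ that is uniform in $\epsilon$.

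The main step, and the one I expect to be the real obstacle, is establishing this uniform bound. Let $(k,u)$ be given by hypothesis~\ref{hypmultiple}. The complementarity condition in~(\ref{hypu}), namely $V(\xi,\cdot,\cdot)(k(\cdot,\xi)+u(\cdot,\cdot+\xi)-u)=0$, turns the inequality used to define $D(k,\cdot)$ into an equality, so that $u$ realizes the infimum and
\[
D(k,m_\epsilon)=\int_0^T(\partial_t m_\epsilon-\nu\Delta m_\epsilon,u)_{H^{-1}\times H^1}=\int_0^T(-\partial_t u-\nu\Delta u,m_\epsilon)_{H^{-1}\times H^1}-\int_{\mathbb{T}^d}u(0)m_0,
\]
the last equality being integration by parts using $u(T)=0$ and $m_\epsilon(0)=m_0$. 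Crucially $u$ does not depend on $\epsilon$, so $-\partial_t u-\nu\Delta u$ is a fixed element of $L^2((0,T),H^{-1})$ and $|D(k,m_\epsilon)|\leq C(1+\|m_\epsilon\|_{L^2((0,T),H^1)})$ with $C$ independent of $\epsilon$. Inserting this into Lemma~\ref{estimate2} gives $\|m_\epsilon\|_{L^2((0,T),H^1)}^2\leq a\|m_\epsilon\|_{L^2((0,T),H^1)}+b$ with $a,b$ independent of $\epsilon$, whence $(m_\epsilon)_\epsilon$ is bounded in $L^2((0,T),H^1(\mathbb{T}^d))$; up to a subsequence it converges weakly to some $m$, and $m\geq 0$ as a weak limit of nonnegative functions.

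Next I would pass to the limit in the variational inequality. For any $v\in\mathcal{AD}(k)$, the admissibility constraint $V(\xi)(v-k-v(\cdot+\xi))\leq 0$ and a change of variables give $\int_0^T(\partial_t m_\epsilon-\nu\Delta m_\epsilon,v)_{H^{-1}\times H^1}\geq D(k,m_\epsilon)=\int_0^T(\partial_t m_\epsilon-\nu\Delta m_\epsilon,u)_{H^{-1}\times H^1}$. Integrating by parts (both $u$ and $v$ vanish at $T$) and using $m_\epsilon(0)=m_0$ yields
\[
\int_0^T(-\partial_t u-\nu\Delta u,m_\epsilon)_{H^{-1}\times H^1}-\int_{\mathbb{T}^d}u(0)m_0\leq\int_0^T(-\partial_t v-\nu\Delta v,m_\epsilon)_{H^{-1}\times H^1}-\int_{\mathbb{T}^d}v(0)m_0.
\]
Since $u,v$ are fixed, both sides are continuous linear functionals of $m_\epsilon$ for the weak $L^2((0,T),H^1)$ topology, so letting $\epsilon\to 0$ gives exactly the asserted inequality and, in particular, $D(k,m)>-\infty$.

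Finally, for $m=0$ a.e.\ on $A$ I would use that, by the preceding bound, $D(k,m_\epsilon)=-\tfrac1\epsilon\int_0^T\int_{\mathbb{T}^d}m_\epsilon\sum_{\xi\in K}V(\xi)k(\cdot,\xi)$ stays bounded, forcing $\int_0^T\int_{\mathbb{T}^d}m_\epsilon\sum_{\xi\in K}V(\xi)k(\cdot,\xi)\to 0$. As $\sum_{\xi\in K} V(\xi)k(\cdot,\xi)$ is a fixed bounded nonnegative function, weak convergence gives $\int_0^T\int_{\mathbb{T}^d}m\sum_{\xi\in K}V(\xi)k(\cdot,\xi)=0$; since $m\geq 0$, $k\geq k_0>0$ and $\sum_{\xi\in K}V(\xi)>0$ precisely on $A=\cup_\xi A_\xi$, this forces $m=0$ a.e.\ on $A$. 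The only genuinely delicate point is the circular-looking estimate of the second paragraph, where the $\epsilon$-independence of $u$ is what breaks the apparent circularity between $D(k,m_\epsilon)$ and $\|m_\epsilon\|_{L^2((0,T),H^1)}$.
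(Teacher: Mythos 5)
Your proof is correct and takes essentially the same approach as the paper: the paper's own proof of this theorem is literally the remark that it repeats the single-jump argument (Theorem \ref{limitsinglejump}), and your proposal is exactly that argument carried out with the multi-jump bookkeeping — the complementarity identity $D(k,m_\epsilon)=\int_0^T(\partial_t m_\epsilon-\nu\Delta m_\epsilon,u)_{H^{-1}\times H^1}$ coming from (\ref{hypu}), the uniform bound via Lemma \ref{estimate2}, weak compactness, passage to the limit in the variational inequality, and the $O(\epsilon)$ bound on $\int_0^T\int_{\mathbb{T}^d} m_\epsilon\sum_{\xi\in K}V(\xi)k(\cdot,\xi)$ forcing $m=0$ on $A$. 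The lone cosmetic slip — calling $\sum_{\xi\in K}V(\xi)k(\cdot,\xi)$ bounded, when (\ref{hypk}) only places it in $L^2$ — is harmless, since the weak-$L^2$ pairing is all your argument actually uses.
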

We recall that $A$ is defined by 
\[
A = \cup_{\xi \in K} \{V(\xi, \cdot, \cdot) > 0\}.
\]
We do not present the proof of this result as it is the same as the one we presented in the case of a unique jump. Like we did in the case of a single jump, we give the following definition :

\begin{Def}
For any $m_0 \in L^2(\mathbb{T}^d)$, $V$ measurable function satisfying $(\ref{hypV})$, $m \in L^2((0,T), H^1(\mathbb{T}^d))$ is called a solution of the Fokker-Planck equation of particles jumping with jumps described by $V$ if 
\begin{itemize}
\item $m = 0$ on $A$;
\item \[
\begin{cases}
\forall v \in H, v \leq Mv, \forall u \in H \text{ satisfying (\ref{hypu})} :\\
\int_0^T (-\partial_t v - \nu \Delta v, m )_{H^{-1} \times H^1} - \int_{\mathbb{T}^d} v(0)m_0 \geq \int_0^T (-\partial_t u - \nu \Delta u, m)_{H^{-1} \times H^1} - \int_{\mathbb{T}^d}v(0)m_0 = D(m).
\end{cases}
\]
\end{itemize}
\end{Def}

We now turn to the question of the uniqueness of such solutions. As in the case of a single jump, uniqueness is a consequence of an existence result for an adjoint equation. As the manner in which we proceed in the particular case of a single jump, we are going to make some assumption on $A$. Moreover here we add an hypothesis on the function $u$ given by hypothesis \ref{hypmultiple}. We state the following hypothesis :
\begin{hyp}\label{hypuniquemultiple}
The set $A$ is either a closed set with Lipschitz boundary such that $\{T\} \times (\mathbb{T}^d \cap A)$ is an open set with Lipschitz boundary, or it is non-decreasing in time (for the inclusion). Moreover, there exists $(k,u)$ satisfying hypothesis \ref{hypmultiple} such that :
\begin{equation}\label{separatexi}
\begin{cases}
\forall \xi \in K, x \in A_{\xi} \Rightarrow (k(x,\xi) + u(t,x+ \xi) - u(t,x)) = 0 ;\\
\forall \xi \in K, x \in A_{\xi} \Rightarrow \forall \xi' \ne \xi, u(t,x) < k(x,\xi') + u(t,x + \xi').
\end{cases}
\end{equation}
\end{hyp}
We are now able to state the uniqueness of solutions of Fokker-Planck equation of jumping particles in the case of a finite number of possible jumps for a set $A$ verifying hypothesis \ref{hypuniquemultiple}.

\begin{Theorem}
Under hypothesis \ref{hypuniquemultiple}, for any positive $m_0 \in L^2(\mathbb{T}^d)$, there exists at most one $m \in L^2((0,T), H^1(\mathbb{T}^d))$ solution of the Fokker-Planck equation of particles jumping with jumps described by $V$.
\end{Theorem}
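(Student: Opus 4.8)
The plan is to mirror exactly the uniqueness proof given for the single-jump case (Theorem \ref{thmunicite1}), which itself adapts the Lasry-Lions monotonicity argument. Suppose $m_1$ and $m_2$ are two solutions of the Fokker-Planck equation in the sense of the preceding definition, both issued from the same $m_0$. The key structural fact I would exploit is that being a solution forces, for any admissible test pair, the value $D(k,m_j)$ to be attained at the distinguished function $u$ coming from hypothesis \ref{hypmultiple}. So the first step is to invoke a multiple-jump analogue of Lemma \ref{lemmeunicite1}: under hypothesis \ref{hypuniquemultiple}, for each $f_i \in L^\infty$ with $\|f_i\|_{L^\infty}\le\epsilon$ there exists $v_i \in H$ solving the adjoint problem
\[
\begin{cases}
\int_0^T(-\partial_t v_i - \nu\Delta v_i,\varphi)_{H^{-1}\times H^1} = \int_0^T(-\partial_t u - \nu\Delta u + f_i,\varphi)_{H^{-1}\times H^1} & \forall \varphi, \ \varphi = 0 \text{ on } A;\\
v_i = M(k,v_i) \text{ on } A;\\
v_i(T) = u(T).
\end{cases}
\]
This existence statement is the multiple-jump counterpart of what Lemma \ref{lemmeunicite1} provides by appeal to Appendix B; I would state it as a lemma and reduce it to the same appendix result.

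\emph{The crucial new ingredient} is condition (\ref{separatexi}) in hypothesis \ref{hypuniquemultiple}. In the single-jump case there is only one constraint $v\le Mv$ on $A$, so the adjoint solution $v_i$ automatically satisfies the saturation $v_i = M(k,v_i)$ there. With several jumps the constraint $v\le M(k,v)$ on $A$ is an infimum over $\xi\in K$, and one must know which jump saturates it at each point. The second line of (\ref{separatexi}) guarantees that on each $A_\xi$ the infimum defining $M$ is attained at the single index $\xi$, strictly separated from all competitors $\xi'\ne\xi$. This separation is what lets the adjoint equation be posed with the equality $v_i = M(k,v_i)$ meaning a genuine single-jump-type relation locally on each $A_\xi$, so that the appendix construction applies component by component. \textbf{I expect this to be the main obstacle}: verifying that the strict inequalities in (\ref{separatexi}) are stable enough (under the $L^\infty$-small perturbation $f_i$ and the resulting perturbation of $v_i$ away from $u$) that the saturating index does not switch, so the adjoint problem remains well-posed and the testing identity below is legitimate.

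With the two adjoint solutions $v_1, v_2$ in hand, I would run the monotonicity computation verbatim from Theorem \ref{thmunicite1}. Because each $v_i$ lies in $\mathcal{AD}(k)$ and $m_1,m_2$ are solutions, the defining variational equality gives, for $i,j\in\{1,2\}$,
\[
D(k,m_j) = \int_0^T(-\partial_t v_i - \nu\Delta v_i, m_j)_{H^{-1}\times H^1} - \int_{\mathbb{T}^d} v_i(0) m_0,
\]
and subtracting the appropriate combinations yields
\[
\int_0^T(-\partial_t(v_1 - v_2) - \nu\Delta(v_1 - v_2), m_1 - m_2)_{H^{-1}\times H^1} = 0.
\]
On the other hand, feeding $\varphi = m_1 - m_2$ (which vanishes on $A$ since both solutions do) into the adjoint equations for $v_1$ and $v_2$ and subtracting gives
\[
\int_0^T(-\partial_t(v_1 - v_2) - \nu\Delta(v_1 - v_2), m_1 - m_2)_{H^{-1}\times H^1} = \int_0^T\int_{\mathbb{T}^d}(m_1 - m_2)(f_1 - f_2).
\]
Combining the two displays forces $\int_0^T\int_{\mathbb{T}^d}(m_1-m_2)(f_1-f_2) = 0$ for every admissible pair $f_1,f_2$ in the ball of radius $\epsilon$ in $L^\infty$. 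Hence $m_1 - m_2$ is orthogonal to the ball of radius $2\epsilon$ in $L^\infty((0,T)\times\mathbb{T}^d)$, which forces $m_1 = m_2$ and completes the proof.
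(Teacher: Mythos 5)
Your proposal is correct and follows essentially the same route as the paper: the paper's own proof of this theorem is literally a one-line reference to the single-jump argument (Theorem \ref{thmunicite1}), and your elaboration — an adjoint-existence lemma reduced to Theorem \ref{existenceapp} of Appendix B, with the strict separation condition (\ref{separatexi}) guaranteeing that the adjoint solutions $v_i$ saturate the correct jump on each $A_{\xi}$ and hence are admissible for the equality defining $D(k,m_j)$, followed by the Lasry--Lions-type cancellation and the orthogonality argument against the $L^{\infty}$ ball — is exactly the argument the paper intends. You also correctly identify (\ref{separatexi}) and the stability of the saturating index under the small perturbation $f_i$ as the one genuinely new ingredient relative to the single-jump case, which is precisely what the appendix construction (via the bounds $v_1 \leq u \leq v_2$ converging to $u^*$ as $\|f\|_{L^{\infty}} \to 0$) is designed to handle.
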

This result is proved following the same argument as the one for a unique possible jump.

\section{The stationary case}
We now turn to the question of the stationary setting. Most of the arguments of the proofs of the results below follow the ones from the time dependent setting. Thus, we only details the arguments which differ form the one in the time dependent case.\\
\\
We assume that there exists $V$ such that :
\begin{equation*}
\begin{cases}
V \in L^{\infty}(K,\mathbb{T}^d) ;\\
V \geq 0 ;\\
\sum_{\xi \in K} V(\xi,x) \leq 1.
\end{cases}
\end{equation*} 
We denote by $K \subset \mathbb{T}^d$ the finite set of possible jumps. We assume that the following assumption is satisfied :
\begin{hyp}\label{hypmultiple2}
There exists $u \in H^2(\mathbb{T}^d) \cap L^{\infty}$ sand $k \in L^{\infty}(K \times \mathbb{T}^d)$ such that
\begin{equation*}
\begin{cases}
k \geq k_0 > 0;\\
\forall \xi \in K, k(\xi, \cdot) \in H^2(\mathbb{T}^d);\\
\forall \xi \in K, V(\xi,x) \left( k(x,\xi) + u( x+ \xi) - u(x) \right) = 0 \text{ a.e. in } \mathbb{T}^d ;
\end{cases}
\end{equation*}
where $M(k,u)$ is defined by :
\[
M(k,u)(x) = \inf_{\xi \in K} k(\xi, x) + u(x + \xi).
\]
\end{hyp}
We study here a stationary Fokker-Planck equation in which there is a fixed leaving rate of players $\delta > 0$ and a constant entry of players $\rho \in L^{\infty}(\mathbb{T}^d)$, $\rho \geq 0$. Namely, at a penalized level, we are interested in :
\begin{equation}\label{penalstat}
-\nu \Delta m_{\epsilon} + \delta m_{\epsilon} + \frac{1}{\epsilon} m_{\epsilon}(x)\big{(}\sum_{\xi \in K}V(\xi,x) \big{)}- \frac{1}{\epsilon} \sum_{\xi \in K} m_{\epsilon}(x- \xi) V(\xi, x - \xi) = \rho \text{ in } \mathbb{T}^d.
\end{equation}
This section is organized as follow : we first show the existence and uniqueness of solutions of the penalized equation (\ref{penalstat}). We then show the existence of a limit as $\epsilon$ goes to $0$ which satisfies the Fokker-Planck equation in a weak sense. We then prove the uniqueness of such limits.

\subsection{The penalized equation}
We begin this section by showing a general uniqueness result for equations of the type of (\ref{penalstat}).
\begin{Prop}
Let $(\lambda_1, .. , \lambda_n) \in L^{\infty}(\mathbb{T}^d)^n$. Then, for any $\rho \in L^2(\mathbb{T}^d)$, there exists at most one solution $m \in H^2(\mathbb{T}^d)$ of :
\begin{equation}
-\nu \Delta m + \delta m + m(x)\big{(}\sum_{i = 1}^n\lambda_i(x) \big{)}- \sum_{i =1}^n m(x- \xi) \lambda_i( x - \xi) = \rho \text{ in } \mathbb{T}^d.
\end{equation}
\end{Prop}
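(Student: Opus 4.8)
The plan is to prove uniqueness by a duality argument, following the principle used repeatedly in this paper that uniqueness for a Fokker--Planck equation follows from a solvability result for its adjoint. Reading the shifts in the statement as $\xi_i$ (one per $\lambda_i$, as in \eqref{penalstat}), write the left-hand side as $Am := -\nu\Delta m + \delta m + m\sum_{i=1}^n \lambda_i - \sum_{i=1}^n \lambda_i(\cdot-\xi_i)\,m(\cdot-\xi_i)$, so the equation is $Am=\rho$, and let $w = m_1 - m_2 \in H^2(\mathbb{T}^d)$ be the difference of two solutions, which solves $Aw=0$. A change of variables on the torus (translation invariance of Lebesgue measure, no boundary terms) shows that the formal adjoint of $A$ is the generator-type operator $A^*\phi := -\nu\Delta\phi + \delta\phi - \sum_{i=1}^n \lambda_i\,(\phi(\cdot+\xi_i)-\phi)$, and that $\int_{\mathbb{T}^d}(Aw)\phi = \int_{\mathbb{T}^d} w\,(A^*\phi)$ for all $w,\phi\in H^2(\mathbb{T}^d)$. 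Granting that $A^*\colon H^2(\mathbb{T}^d)\to L^2(\mathbb{T}^d)$ is onto, for arbitrary $g\in L^2(\mathbb{T}^d)$ I pick $\phi$ with $A^*\phi=g$ and obtain $\int_{\mathbb{T}^d} w\,g = \int_{\mathbb{T}^d} w\,(A^*\phi) = \int_{\mathbb{T}^d}(Aw)\phi = 0$; since $g$ is arbitrary this forces $w=0$, i.e.\ $m_1=m_2$.

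It remains to establish surjectivity of $A^*$. Since $-\nu\Delta+\delta$ is an isomorphism from $H^2(\mathbb{T}^d)$ onto $L^2(\mathbb{T}^d)$ and the zeroth-order operator $\phi\mapsto\sum_i\lambda_i(\phi(\cdot+\xi_i)-\phi)$ factors through the compact embedding $H^2(\mathbb{T}^d)\hookrightarrow L^2(\mathbb{T}^d)$ followed by bounded multiplication and translation, $A^*$ is a compact perturbation of an isomorphism, hence Fredholm of index $0$; by the Fredholm alternative it is onto if and only if it is injective. Injectivity is where the sign of the coefficients enters (in the application $\lambda_i=\epsilon^{-1}V(\xi_i,\cdot)\ge 0$, which I assume here): I would prove it through the maximum principle satisfied by $A^*$, in the spirit of Proposition \ref{maxonejump}. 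If $A^*\phi=0$ and $\phi$ attains its maximum at $x_0$, then $-\nu\Delta\phi(x_0)\ge 0$ and, because $\phi(x_0+\xi_i)\le\phi(x_0)$ with $\lambda_i\ge 0$, the jump contribution $-\sum_i\lambda_i(x_0)(\phi(x_0+\xi_i)-\phi(x_0))\ge 0$; evaluating $A^*\phi=0$ at $x_0$ then gives $\delta\phi(x_0)\le 0$, so $\max\phi\le 0$. The symmetric argument at a minimum yields $\min\phi\ge 0$, whence $\phi\equiv 0$. This injectivity of $A^*$ closes the duality argument and proves the proposition.

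The step I expect to be the main obstacle is making this last maximum-principle argument rigorous at the $H^2$ level, since in high dimension $H^2(\mathbb{T}^d)$ functions need not be continuous and the coefficients $\lambda_i$ are merely $L^\infty$. I would handle it exactly as in Proposition \ref{maxonejump}: first take smooth $\lambda_i\ge 0$, for which elliptic bootstrapping on $-\nu\Delta\phi+\delta\phi=\sum_i\lambda_i(\phi(\cdot+\xi_i)-\phi)$ makes $\phi$ smooth and the pointwise argument legitimate, giving injectivity and hence invertibility of the regularized $A^{*}$; then recover the general $L^\infty$ case by approximation $\lambda_i^\eta\to\lambda_i$, using uniform bounds and the Fredholm structure to pass to the limit. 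Apart from this regularization point the argument is routine: the integration-by-parts identity and the Fredholm setup are standard on the torus, the coercivity furnished by $\delta>0$ playing the role held by Gronwall's lemma in the time-dependent penalized case. I would also remark that the hypothesis $\lambda_i\ge 0$ is genuinely needed, as it is precisely what makes $A^*$ a generator obeying the comparison principle.
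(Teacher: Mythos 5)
Your proof is correct, but it follows a genuinely different route from the paper's. The paper argues spectrally: if the difference $w=m_1-m_2$ of two solutions is nonzero, then the solution operator $\mathcal{T}$ (mapping $m$ to the solution of $-\nu\Delta \mathcal{T}(m)+\delta \mathcal{T}(m)+\mathcal{T}(m)\sum_i\lambda_i=\sum_i\lambda_i(\cdot-\xi_i)m(\cdot-\xi_i)$) is compact on $L^1(\mathbb{T}^d)$, preserves the cone of nonnegative functions (this is where $\lambda_i\ge 0$ enters there), and has spectral radius $r\ge 1$ since $w$ is a fixed point of $\mathcal{T}$; the Krein--Rutman theorem then yields a nonnegative eigenfunction $\tilde w\ne 0$, and integrating its eigenvalue equation over the torus gives $\delta\int \tilde w+(1-\tfrac1r)\int(\sum_i\lambda_i)\tilde w=0$, forcing $\tilde w=0$, a contradiction. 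That argument needs no adjoint, no Fredholm theory and, importantly, no regularization: it handles $L^\infty$ coefficients directly. Your duality route ($A^*$ onto $\Rightarrow$ $A$ injective) is instead the stationary counterpart of the paper's own philosophy for time-dependent uniqueness (Theorem \ref{thmunicite1}), trading Krein--Rutman for the Fredholm alternative plus a classical maximum principle, and it has the merit of making explicit the sign hypothesis $\lambda_i\ge 0$, which the statement omits but which the paper's proof also uses tacitly. The one step you must tighten is the smooth-to-$L^\infty$ limit: invertibility of the regularized operators $A^*_\eta$ does not by itself pass to the limit. What does pass is the uniform resolvent bound that your own maximum-principle computation already contains: if $A^*_\eta\phi=g$ with $g\in L^\infty(\mathbb{T}^d)$, evaluating at a maximum and at a minimum of the (smooth) $\phi$ yields $\|\phi\|_{L^\infty}\le \delta^{-1}\|g\|_{L^\infty}$, uniformly in $\eta$. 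Elliptic regularity then bounds $\phi_\eta$ in $H^2(\mathbb{T}^d)$, a weak limit solves $A^*\phi=g$, so the range of $A^*$ contains the dense subspace $L^\infty(\mathbb{T}^d)$; since $A^*$ is Fredholm its range is closed, hence $A^*$ is onto and your duality argument closes. With that estimate spelled out, the proof is complete.
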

\begin{proof}
Let us assume that there exist two such solutions $m_1$ and $m_2$. Then if we note $µ= m_1 - m_2$, it satisfies :
\[
-\nu \Delta µ + \delta µ + µ(x)\big{(}\sum_{i = 1}^n\lambda_i(x) \big{)}- \sum_{i =1}^n µ(x- \xi) \lambda_i( x - \xi) = 0 \text{ in } \mathbb{T}^d.
\]
Let us assume $µ \ne 0$, then the operator $\mathcal{T}$ has a spectral radius $r \geq 1$, where $\mathcal{T}$ is defined from $L^1(\mathbb{T}^d)$ into itself as follows : for any $m \in L^1(\mathbb{T}^d)$, $\mathcal{T}(m)$ is the only solution of
\[
-\nu \Delta \mathcal{T}(m) + \delta \mathcal{T}(m) + \mathcal{T}(m)(x)\big{(}\sum_{i = 1}^n\lambda_i(x) \big{)}= \sum_{i =1}^n m(x- \xi) \lambda_i( x - \xi)  \text{ in } \mathbb{T}^d.
\]
The operator $\mathcal{T}$ is compact and $$\mathcal{T}(\{ m \in L^1(\mathbb{T}^d), m \geq 0\}) \subset \{ m \in L^1(\mathbb{T}^d), m \geq 0\}.$$ Thus by the Krein-Rutman theorem, there exists $r \geq 1$, $w \in \{ m \in L^1(\mathbb{T}^d), m \geq 0\}, w \ne 0$ such that :
\[
-\nu \Delta w + \delta w + w\big{(}\sum_{i = 1}^n\lambda_i(x) \big{)}= \frac{1}{r}\sum_{i =1}^n w(x- \xi) \lambda_i( x - \xi)  \text{ in } \mathbb{T}^d.
\]
Integrating in space, we obtain that
\[
\int_{\mathbb{T}^d}\delta w + \int_{\mathbb{T}^d}\left( 1 - \frac{1}{r}\right)(\sum_{i = 1}^n\lambda_i )w= 0.
\]
The two terms of the left hand side are positive so we deduce that $w = 0$, which is a contradiction. Thus $µ = 0$ and there exists a unique solution of this PDE.
\end{proof}
We now show that there exists a solution of (\ref{penalstat}).
\begin{Prop}
For any $\epsilon > 0$, there exists a unique solution $m \in L^1(\mathbb{T}^d)$ of (\ref{penalstat}). This solution $m$ is positive.
\end{Prop}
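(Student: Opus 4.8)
The plan is to deduce existence and positivity from the uniqueness statement of the preceding proposition, via a fixed-point reformulation together with the Krein--Rutman theorem. Writing $K = \{\xi_1, \dots, \xi_n\}$ and setting $\lambda_i := \frac{1}{\epsilon} V(\xi_i, \cdot) \in L^{\infty}(\mathbb{T}^d)$, $\lambda_i \geq 0$, equation (\ref{penalstat}) is exactly an equation of the type treated in the preceding proposition with source $\rho$; hence uniqueness is already granted, and it only remains to construct a nonnegative solution.

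To this end I would introduce the local elliptic operator $\mathcal{A}$ defined by
\[
\mathcal{A}(g) := -\nu \Delta g + \delta g + \frac{1}{\epsilon} g \sum_{\xi \in K} V(\xi, \cdot),
\]
whose zeroth-order coefficient is bounded below by $\delta > 0$. Thus $\mathcal{A}$ is invertible from $L^2(\mathbb{T}^d)$ onto $H^2(\mathbb{T}^d)$, its inverse is positivity preserving by the maximum principle (since $\delta + \frac{1}{\epsilon}\sum_{\xi} V \geq \delta > 0$), and, by the compact embedding $H^2(\mathbb{T}^d) \hookrightarrow L^2(\mathbb{T}^d)$, the map $\mathcal{A}^{-1}$ is compact on $L^2(\mathbb{T}^d)$. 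Setting $R := \mathcal{A}^{-1}\rho$ and letting $\mathcal{T}$ be the compact positive operator of the preceding proposition, namely $\mathcal{T}(m) := \mathcal{A}^{-1}\big( \frac{1}{\epsilon}\sum_{\xi \in K} m(\cdot - \xi) V(\xi, \cdot - \xi)\big)$, equation (\ref{penalstat}) becomes the fixed-point relation $(I - \mathcal{T})m = R$.

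The heart of the argument is to show that the spectral radius $r(\mathcal{T})$ is strictly less than $1$. Once this is established, since $\rho \geq 0$ gives $R \geq 0$ and $\mathcal{T}$ preserves positivity, the Neumann series $(I - \mathcal{T})^{-1} = \sum_{n \geq 0}\mathcal{T}^n$ converges in operator norm and yields at once a solution $m := \sum_{n \geq 0}\mathcal{T}^n R \in L^2(\mathbb{T}^d) \subset L^1(\mathbb{T}^d)$ that is nonnegative. To bound $r(\mathcal{T})$ I would argue as in the preceding proposition: by Krein--Rutman, $r := r(\mathcal{T})$ is an eigenvalue associated with some $w \geq 0$, $w \neq 0$, so that $\mathcal{A}(w) = \frac{1}{r}\,\frac{1}{\epsilon}\sum_{\xi} w(\cdot - \xi) V(\xi, \cdot - \xi)$. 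Integrating this identity over $\mathbb{T}^d$, using $\int_{\mathbb{T}^d}\Delta w = 0$ together with the translation invariance of the Lebesgue integral to rewrite each shifted gain term, one obtains
\[
(1 - r)\int_{\mathbb{T}^d}\Big(\frac{1}{\epsilon}\sum_{\xi \in K} V(\xi,\cdot)\Big) w = r\,\delta \int_{\mathbb{T}^d} w.
\]
Since $\delta > 0$ and $\int_{\mathbb{T}^d} w > 0$, the right-hand side is strictly positive; this forces $\int (\frac{1}{\epsilon}\sum_{\xi} V) w > 0$ and then $1 - r > 0$, i.e. $r < 1$ (the degenerate case $V \equiv 0$, where $\mathcal{T} = 0$, is even easier).

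I expect the main obstacle to be the correct handling of the functional framework and of the Krein--Rutman application, in particular verifying that $\mathcal{T}$ is a compact, strongly positive operator on $L^2(\mathbb{T}^d)$ with $r(\mathcal{T}) > 0$, so that the spectral radius is a genuine eigenvalue carrying a nonnegative eigenfunction. Once $r(\mathcal{T}) < 1$ is secured, a final elliptic bootstrap (using $\rho \in L^{\infty}(\mathbb{T}^d)$ and $m \in L^2(\mathbb{T}^d)$) upgrades $m$ to $H^2(\mathbb{T}^d)$, so that $m$ is a genuine solution, nonnegative and unique by the preceding proposition, with strict positivity following from the strong maximum principle as $\rho \geq 0$ and the gain term is nonnegative.
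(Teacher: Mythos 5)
Your proposal is correct, but your existence argument takes a genuinely different route from the paper's. The paper, like you, gets uniqueness from the preceding proposition; for existence, however, it applies Schauder's fixed point theorem to the affine map $m \mapsto \mathcal{A}^{-1}\bigl(\frac{1}{\epsilon}\sum_{\xi \in K} m(\cdot-\xi)V(\xi,\cdot-\xi) + \rho\bigr)$ on the convex set $\Omega = \{ m \in L^1(\mathbb{T}^d),\ m \geq 0,\ \int_{\mathbb{T}^d} m \sum_{\xi \in K} V(\xi,\cdot) \leq \delta^{-1}\int_{\mathbb{T}^d}\rho\}$, whose invariance follows by integrating the defining equation in space (together with the bound $\sum_{\xi}V \leq 1$); nonnegativity comes from the maximum principle since $\Omega$ consists of nonnegative functions. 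You instead exploit linearity: you prove the quantitative bound $r(\mathcal{T}) < 1$ and sum the Neumann series. The integration identity you apply to the Krein--Rutman eigenfunction is exactly the computation the paper performs in the \emph{uniqueness} proof of the preceding proposition, where it is run by contradiction to exclude $r \geq 1$; you run it in the positive direction. In effect you observe that the earlier proof already yields $r(\mathcal{T}) < 1$, and that this stronger fact delivers existence, uniqueness (invertibility of $I - \mathcal{T}$) and nonnegativity all at once, making Schauder's theorem unnecessary. What the paper's route buys is softness: no spectral discussion is needed for existence, and the compactness argument runs directly in $L^1$. What your route buys is economy and the explicit nonnegative solution formula $m = \sum_{n \geq 0}\mathcal{T}^n R$.

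Two refinements. First, the point you flag as the main obstacle --- verifying $r(\mathcal{T}) > 0$ --- is not actually needed, and the degenerate case should not be tied to $V \equiv 0$: simply split on the spectral radius itself. If $r(\mathcal{T}) = 0$ the Neumann series converges trivially; if $r(\mathcal{T}) > 0$, the weak form of Krein--Rutman for compact positive operators (no strong positivity or irreducibility required, the cone of nonnegative functions being total) supplies the eigenfunction $w \geq 0$, $w \neq 0$, and your integration gives $r < 1$. Either way $r(\mathcal{T}) < 1$. (On the torus one can in fact show $r > 0$ whenever $V \not\equiv 0$, because the Green kernel of $\mathcal{A}^{-1}$ is strictly positive, but this is superfluous.) Second, since the statement asserts uniqueness among $L^1$ solutions while the preceding proposition is stated for $H^2$ solutions, your final elliptic bootstrap should be applied to an arbitrary $L^1$ solution, not only to the constructed one; the same regularity argument works, and this is also the level of detail at which the paper itself leaves the point.
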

\begin{proof}
We define the application $\mathcal{T}$ from $L^1(\mathbb{T}^d)$ into itself by :  for any $m \in L^1(\mathbb{T}^d)$, $\mathcal{T}(m)$ is the only solution of
\[
-\nu \Delta \mathcal{T}(m) + \delta \mathcal{T}(m) + \mathcal{T}(m)(x)\big{(}\sum_{\xi \in K}V(\xi,x) \big{)}= \sum_{\xi \in K} m(x- \xi) V(\xi, x - \xi) + \rho  \text{ in } \mathbb{T}^d.
\]
Let us observe that if $m \geq 0$, then by the maximum principle $\mathcal{T}(m) \geq 0$. Moreover, if $m \in \Omega$ defined by :
\[
\Omega := \{ m \in L^1(\mathbb{T}^d), m \geq 0, \int_{\mathbb{T}^d} m(x)\left(\sum_{\xi \in K}V(\xi,x)\right)dx \leq \delta^{-1} \int_{\mathbb{T}^d} \rho\};
\]
then $\mathcal{T}(m) \in \Omega$. Indeed, integrating in space the equation which defines $\mathcal{T}(m)$, we obtain that
\[
\delta \int_{\mathbb{T}^d}\mathcal{T}(m) + \frac{1}{\epsilon} \int_{\mathbb{T}^d} \mathcal{T}(m) \left( \sum_{\xi \in K} V(\xi) \right) = \int_{\mathbb{T}^d}\rho + \frac{1}{\epsilon} \int_{\mathbb{T}^d} m\left(\sum_{\xi \in K}V(\xi)\right).
\]
Thus, $\Omega$ is stable by $\mathcal{T}$. Applying Schauder's fixed point theorem, we deduce that $\mathcal{T}$ has a fixed point. The uniqueness is given by the previous proposition.
\end{proof}
\begin{Rem}
This result does not depend on the sets on which the particles jump, i.e. it does not depend on the function $V$ except for the fact that $V \geq 0$ and $\sum_{\xi \in K} V(\xi) \leq 1$.
\end{Rem}

\subsection{Existence and uniqueness of the solution of the stationary Fokker-Planck equaiton}
We now turn to the existence of a limit density as $\epsilon$ goes to $0$. We define the following set:
\[
\mathcal{AD}(k) := \{ v \in H^1(\mathbb{T}^d), \forall \xi \in K, V(t,x,\xi)(v(x) - k(x,\xi) - v(x + \xi)) \leq 0 \text{ on } A\}.
\]
A crucial result to pass to the limit  is the following :
\begin{Lemma}\label{eststat}
Assume hypothesis \ref{hypmultiple2} holds true. Let $m\in H^1(\mathbb{T}^d)$ be such that :
\[
\begin{cases}
m \geq 0,\\
\delta \int_{\mathbb{T}^d} m = \int_{\mathbb{T}^d} \rho.
\end{cases}
\]
Then for any $k$ satisfying (\ref{hypk}) :
\[
||m||_{H^1(\mathbb{T}^d)}^2 \leq C\left(-D(k,m) + ||m||_{H^1}||\rho||_{L^2}\right);
\]
where $D(k,m)$ is defined by:
\[
D(k,m) = \{\nu \int_{\mathbb{T}^d} \nabla m \cdot \nabla v + \delta \int_{\mathbb{T}^d}vm - \int_{\mathbb{T}^d} \rho v | v \in \mathcal{AD}(k) \}.
\]
\end{Lemma}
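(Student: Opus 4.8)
The plan is to transcribe, essentially line by line, the proof of Lemma \ref{estimate1} (and of its multi-jump analogue Lemma \ref{estimate2}) to the stationary setting, the simplification being that the operator $-\nu\Delta+\delta$ is self-adjoint and coercive. Two consequences of this symmetry are worth stressing in advance: none of the parabolic boundary-in-time terms (which forced the $\|\tilde u\|_{L^\infty(L^2)}$ correction in Lemma \ref{estimate1}) will appear, and the zeroth order term $\delta m$ will directly furnish the $L^2$ part of the $H^1$ norm, so that one recovers the \emph{full} $\|m\|_{H^1}^2$ rather than only the gradient part. The whole difficulty is therefore concentrated in producing a single good competitor in the infimum defining $D(k,m)$.

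First I would construct the test function. Playing the role of proposition \ref{weakqvi}, I would invoke the stationary (elliptic) analogue of that QVI/obstacle result: under hypothesis \ref{hypmultiple2} there exists $\tilde u\in H^1(\mathbb{T}^d)\cap L^\infty(\mathbb{T}^d)$ solving the quasi-variational inequality associated with $-\nu\Delta+\delta$ and the source $\nu\Delta m-\delta m$, namely
\[
\max\!\big(-\nu\Delta \tilde u+\delta\tilde u-(\nu\Delta m-\delta m),\ \tilde u-M(k,\tilde u)\big)=0 .
\]
By the very formulation $\max(\cdot,\cdot)=0$, this $\tilde u$ satisfies simultaneously $\tilde u\le M(k,\tilde u)$ and the one-sided inequality $-\nu\Delta\tilde u+\delta\tilde u\le \nu\Delta m-\delta m$. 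Since $\tilde u\le M(k,\tilde u)$ implies $\tilde u\in\mathcal{AD}(k)$, the function $\tilde u$ is admissible in the infimum defining $D(k,m)$, whence
\[
D(k,m)\le \nu\int_{\mathbb{T}^d}\nabla m\cdot\nabla\tilde u+\delta\int_{\mathbb{T}^d} m\,\tilde u-\int_{\mathbb{T}^d}\rho\,\tilde u .
\]

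Next I would run the duality computation. By self-adjointness on the torus the first two integrals equal $\int_{\mathbb{T}^d}(-\nu\Delta\tilde u+\delta\tilde u)\,m$; using the subsolution inequality above together with $m\ge 0$ gives $\int_{\mathbb{T}^d}(-\nu\Delta\tilde u+\delta\tilde u)\,m\le\int_{\mathbb{T}^d}(\nu\Delta m-\delta m)\,m=-\big(\nu\|\nabla m\|_{L^2}^2+\delta\|m\|_{L^2}^2\big)$. As in Lemma \ref{estimate1} this pointwise pairing is not licit because $\tilde u$ has only $H^1$ regularity, so I would obtain the same estimate rigorously from the weak variational inequality satisfied by $\tilde u$; here the step is cleaner than in the parabolic case, since no boundary-in-time term is generated. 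Combining the two displays yields
\[
D(k,m)\le -\big(\nu\|\nabla m\|_{L^2}^2+\delta\|m\|_{L^2}^2\big)-\int_{\mathbb{T}^d}\rho\,\tilde u\le -\min(\nu,\delta)\,\|m\|_{H^1}^2+\|\rho\|_{L^2}\,\|\tilde u\|_{L^2}.
\]
Finally, the a priori bound coming with the QVI gives $\|\tilde u\|_{L^2}\le C\|m\|_{H^1}$ with $C=C(\nu,\delta,k,\|u\|_{L^\infty})$ (the estimate is homogeneous in $m$ because $\tilde u\equiv 0$ solves the problem when $m\equiv 0$, using $k\ge k_0>0$), the mass normalisation $\delta\int_{\mathbb{T}^d}m=\int_{\mathbb{T}^d}\rho$ being used to control the lower order contributions; rearranging and absorbing $\min(\nu,\delta)^{-1}$ into $C$ gives the claim.

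The main obstacle is the first step, i.e. the stationary transcription of proposition \ref{weakqvi}: the existence of $\tilde u\in\mathcal{AD}(k)$ enjoying the one-sided inequality together with the homogeneous $H^1\cap L^\infty$ estimate, and the justification that it may be paired against $m$ despite having only $H^1$ regularity. I expect this to be routine relative to the parabolic case precisely because $-\nu\Delta+\delta$ is symmetric and coercive (thanks to $\delta>0$), so the underlying variational inequality is posed in the energy inner product $\nu\int\nabla\cdot\nabla+\delta\int\cdot\,$, and the subsolution $u$ furnished by hypothesis \ref{hypmultiple2} guarantees both solvability and the required $L^\infty$ control. Everything else—the admissibility of $\tilde u$, the symmetry manipulation, and the absorption of the remainder—is a direct copy of the single-jump argument of Lemma \ref{estimate1}.
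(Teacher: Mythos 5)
Your overall route is exactly the one the paper intends: the paper never writes a proof of Lemma \ref{eststat}, it only declares the stationary results to be the ``exact analogous'' of the time-dependent ones with proposition \ref{weakqvi2} playing the role of proposition \ref{weakqvi}, and your duality computation, your observation that the coercive zeroth-order term now yields the full $H^1$ norm, and your absorption of $\int_{\mathbb{T}^d}\rho\,\tilde u$ by the a priori bound on $\tilde u$ are precisely the transcription of Lemma \ref{estimate1}. The difficulty is concentrated where you say it is, in the construction of the competitor; but that is also where your write-up has a real flaw.

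As stated, your first step would fail. You invoke a strong QVI, $\max\big(-\nu\Delta\tilde u+\delta\tilde u-(\nu\Delta m-\delta m),\,\tilde u-M(k,\tilde u)\big)=0$, with the constraint $\tilde u\leq M(k,\tilde u)$ imposed on all of $\mathbb{T}^d$ and with $\tilde u\in L^\infty$. For a source which, like $\nu\Delta m-\delta m$ with $m\in H^1$ only, is a genuine element of $H^{-1}(\mathbb{T}^d)$ --- neither a function nor bounded below --- none of this is available: the pointwise max is meaningless, no $L^\infty$ bound can hold, and the paper itself stresses (in the discussion following (\ref{qvifk})) that the globally constrained QVI is not solvable for data unbounded from below, because the decreasing approximation scheme then has no lower barrier. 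What proposition \ref{weakqvi2} actually provides, and what hypothesis \ref{hypmultiple2} is designed for, is weaker on exactly this point: the constraint is imposed only on the jump sets $\{V(\xi)>0\}$ (the set $\mathcal{K}(k,\tilde u)$), where the function $u$ of hypothesis \ref{hypmultiple2} together with $k\geq k_0>0$ and $u\in L^\infty$ forbids long chains of jumps and supplies the barrier. This weaker object suffices for every use you make of $\tilde u$: membership $\tilde u\in\mathcal{K}(k,\tilde u)$ is exactly $\tilde u\in\mathcal{AD}(k)$, which is all that admissibility in $D(k,m)$ requires; the one-sided inequality $\nu\int_{\mathbb{T}^d}\nabla\tilde u\cdot\nabla m+\delta\int_{\mathbb{T}^d}\tilde u\,m\leq\langle\nu\Delta m-\delta m,m\rangle_{H^{-1}\times H^1}$ follows rigorously by testing the weak variational inequality with $v=\tilde u-m$, which is admissible because $m\geq 0$ and the obstacle constraint relative to $\tilde u$ is stable under subtraction of nonnegative functions; and the bound $\|\tilde u\|_{L^2}+\|\tilde u\|_{H^1}\leq C(1+\|m\|_{H^1})$ of proposition \ref{weakqvi2} replaces your unjustified $L^\infty$ and ``homogeneous'' estimates. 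Note that with this inhomogeneous bound the leftover term $C\|\rho\|_{L^2}$ is not absorbed by homogeneity (the QVI solution map is nonlinear, so $\tilde u=0$ when $m=0$ implies nothing); it is here that the normalization $\delta\int_{\mathbb{T}^d} m=\int_{\mathbb{T}^d}\rho$ really enters, since $m\geq 0$ gives $\|m\|_{H^1}\geq\|m\|_{L^1}=\delta^{-1}\int_{\mathbb{T}^d}\rho$, hence $1\leq C(\rho)\|m\|_{H^1}$ when $\rho\not\equiv 0$, the case $\rho\equiv 0$ being trivial. (One caveat you share with the paper's own Lemma \ref{estimate1}: proposition \ref{weakqvi2} formally requires $f\in\mathcal{M}_b$, which $\nu\Delta m-\delta m$ need not be.)
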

The following result holds true :
\begin{Theorem}
Assume hypothesis \ref{hypmultiple2} holds true. Then, there exists a unique $m \in H^1(\mathbb{T}^d)$ such that :
\begin{itemize}
\item $\forall \xi \in K : V(\xi,x)m(x) = 0$ almost everywhere in $\mathbb{T}^d$.
\item For any $u \in H^1(\mathbb{T}^d)$ which satisfies
\[
\forall \xi \in K, V(\xi,x) \left( k(x,\xi) + u( x+ \xi) - u(x) \right) = 0 \text{ a.e. in } \mathbb{T}^d ;
\]
the following holds :
\[
\begin{aligned}
&\forall v \in \mathcal{AD}(k) :\\
&\nu \int_{\mathbb{T}^d}\nabla m \cdot \nabla (v-u) + \delta \int_{\mathbb{T}^d}m(v-u) \geq \int_{\mathbb{T}^d}\rho(v-u).
\end{aligned}
\]
\end{itemize}
\end{Theorem}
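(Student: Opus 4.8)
The plan is to reproduce, in the elliptic setting, the two–step scheme of the time-dependent case: first establish existence by letting $\epsilon\to0$ in the penalized equation (\ref{penalstat}), using the a priori bound of Lemma \ref{eststat} exactly as Theorem \ref{limitsinglejump} uses its estimate; then deduce uniqueness from the solvability of an adjoint problem, following the duality argument of Theorem \ref{thmunicite1}.

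For existence, let $m_\epsilon$ denote the penalized solution, which exists, is unique and nonnegative by the two propositions of the previous subsection. Integrating (\ref{penalstat}) over $\mathbb{T}^d$ and cancelling the jump terms by a change of variables gives $\delta\int_{\mathbb{T}^d}m_\epsilon=\int_{\mathbb{T}^d}\rho$, so $m_\epsilon$ is eligible for Lemma \ref{eststat}. Testing (\ref{penalstat}) against the function $u$ of hypothesis \ref{hypmultiple2}, performing the same change of variables, and using $V(\xi,\cdot)(u(\cdot)-u(\cdot+\xi))=V(\xi,\cdot)k(\cdot,\xi)$ yields
\[
\nu\int_{\mathbb{T}^d}\nabla m_\epsilon\cdot\nabla u+\delta\int_{\mathbb{T}^d}m_\epsilon u-\int_{\mathbb{T}^d}\rho u=-\frac{1}{\epsilon}\int_{\mathbb{T}^d}m_\epsilon\sum_{\xi\in K}V(\xi,\cdot)k(\cdot,\xi)\le0,
\]
whereas for an arbitrary $v\in\mathcal{AD}(k)$ the admissibility inequality $V(\xi,\cdot)(v(\cdot)-v(\cdot+\xi))\le V(\xi,\cdot)k(\cdot,\xi)$ together with $m_\epsilon\ge0$ turns the analogous identity into a lower bound by the same right-hand side. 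Comparing the two shows that $u$ realizes the infimum, so $D(k,m_\epsilon)$ equals the negative left-hand side above. Since $u\in H^2\cap L^\infty$ is fixed, this bounds $-D(k,m_\epsilon)$ by $C(\|m_\epsilon\|_{H^1}+\|\rho\|_{L^2})$, and Lemma \ref{eststat} then gives an inequality of the form $\|m_\epsilon\|_{H^1}^2\le a\|m_\epsilon\|_{H^1}+b$ with $a,b$ independent of $\epsilon$, hence a uniform $H^1$ bound. Extracting a subsequence, $m_\epsilon\rightharpoonup m$ weakly in $H^1(\mathbb{T}^d)$, with $m\ge0$, and by Rellich compactness the convergence is strong in $L^2(\mathbb{T}^d)$. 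The two limiting properties then pass to the limit as in Theorem \ref{limitsinglejump}: since $-D(k,m_\epsilon)=\frac{1}{\epsilon}\int_{\mathbb{T}^d}m_\epsilon\sum_{\xi\in K}V(\xi,\cdot)k(\cdot,\xi)$ is uniformly bounded and $k\ge k_0>0$, we get $\int_{\mathbb{T}^d}m_\epsilon\sum_{\xi\in K}V(\xi,\cdot)\to0$, and the strong $L^2$ limit with $m\ge0$ and $\sum_{\xi\in K}V\ge0$ forces $V(\xi,\cdot)m=0$ a.e. for every $\xi$; and testing (\ref{penalstat}) against $v\in\mathcal{AD}(k)$ and against a given $u$ meeting the equality constraint of the statement, subtracting, and invoking the two bounds above gives $\nu\int\nabla m_\epsilon\cdot\nabla(v-u)+\delta\int m_\epsilon(v-u)-\int\rho(v-u)\ge0$, which passes to the limit by weak $H^1$ convergence of the gradient term and strong $L^2$ convergence of the rest.

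For uniqueness I would adapt Theorem \ref{thmunicite1}. If $w_1,w_2$ both satisfy the equality constraint $V(\xi,\cdot)(k+w_i(\cdot+\xi)-w_i(\cdot))=0$, then both belong to $\mathcal{AD}(k)$, and applying the variational inequality twice with the roles of $w_1,w_2$ exchanged shows that the affine functional $w\mapsto\nu\int\nabla m\cdot\nabla w+\delta\int mw-\int\rho w$ takes one and the same value $D(k,m)$ on every such $w$. The key ingredient is then a stationary analogue of Lemma \ref{lemmeunicite1}: for $\|f\|_{L^\infty}$ small there is $v_f\in H^1(\mathbb{T}^d)$ with $v_f=M(k,v_f)$ on $A$ and $\nu\int\nabla v_f\cdot\nabla\phi+\delta\int v_f\phi=\nu\int\nabla u\cdot\nabla\phi+\delta\int u\phi+\int f\phi$ for every $\phi$ vanishing on $A$. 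Granting this, for two solutions $m_1,m_2$ and sources $f_1,f_2$ the equal-value property gives, for $j=1,2$, the identity $\nu\int\nabla m_j\cdot\nabla(v_{f_1}-v_{f_2})+\delta\int m_j(v_{f_1}-v_{f_2})=\int\rho(v_{f_1}-v_{f_2})$; subtracting cancels the common $\rho$ term, and since $m_1-m_2=0$ on $A$ it is an admissible test function $\phi$ in the adjoint equation for $v_{f_1}-v_{f_2}$, which turns the identity into $\int_{\mathbb{T}^d}(m_1-m_2)(f_1-f_2)=0$. Letting $f_1,f_2$ range over a small $L^\infty$ ball forces $m_1=m_2$.

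I expect the stationary adjoint existence to be the main obstacle. In contrast with the time-dependent case there is no time variable, so none of the time-regularity difficulties behind hypothesis \ref{hypuniquemultiple} arise, and the adjoint is a genuinely elliptic quasi-variational inequality on the fixed torus with the constraint $v=M(k,v)$ on $A$; its solvability for small $f$ should follow from the elliptic counterpart of Theorem \ref{existenceapp}. The delicate point is to guarantee that the perturbed source $-\nu\Delta u+\delta u+f$ still yields an obstacle-problem solution and that the resulting $v_f$ actually verifies the equality constraint, namely that the minimizing jump on each $A_\xi$ is preserved under the perturbation, so that the equal-value property can be applied.
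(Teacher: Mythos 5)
Your proposal takes essentially the same route as the paper: the paper's own proof is a one-line deferral stating that existence and uniqueness are ``the exact analogous'' of the time-dependent arguments (penalization plus the a priori bound of Lemma \ref{eststat} for existence, and the adjoint/duality argument of Theorem \ref{thmunicite1} for uniqueness, with Proposition \ref{weakqvi2} playing the role of Proposition \ref{weakqvi}), which is precisely the scheme you carry out. Your write-up is in fact more detailed than the paper's, and the delicate point you flag at the end --- constructing the stationary adjoint solution that still satisfies the equality constraint on each $A_\xi$ --- is exactly the step the paper itself leaves to the remark that, time regularity being no longer an issue, it ``is easily proved following the argument of the time-dependent case.''
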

\begin{proof}
The proof of both existence and uniqueness are the exact analogous of the ones in the time dependent case, see proposition \ref{weakqvi2} in the appendix for the analogous in the stationary setting of proposition \ref{weakqvi}.
\end{proof}
\begin{Rem}
Let us note that the question of the uniqueness of solutions is a lot more simpler in the stationary case. Indeed the time regularity is no longer a problem and the existence of solutions for the "adjoint" problem is then easily proved following the argument of the time-dependent case.
\end{Rem}
\section{A remark on the generality of this method}
Before using this notion of solution of Fokker-Planck equation in a MFG of impulse control, we precise some straightforward generalizations of the results of the previous part. First working on the torus $\mathbb{T}^d$ does not play any role but simplifying the notations and fixing a framework. Thus those results generalize to more complex domain and boundary conditions. Secondly, the cost of jumps $k$ can be allowed to depend on the time variable. If this dependence is smooth, this does not change our results. Also more general jumps can be model with this kind of method. For example one can think of impulse control in which any jump is possible ; or a problem of optimal stopping time type, except that instead of leaving, "stopping" the trajectories restarts it at the origin (or at any given point). In this second problem, it is not the jumps but the destination which belongs to a finite set. Both of these optimization problems have value functions which solves QVI under some assumptions. In these two examples continuity of the value function is crucial, hence appropriate assumptions have to be made on the regularity of solutions and a solution of the Fokker-Planck equation may not be more regular than a measure. The important point is that with the QVI comes a notion of "admissible" solutions for the QVI (in this article being admissible is satisfying $u \leq Mu$). With this notion comes the notion of admissible density of particles which is, in this article, the fact that $D(m) > -\infty$. Then a priori estimates on the solutions of the Fokker-Planck equation are available and we can continue developing such solutions.\\
\\
Finally, let us note that optimal switching problems can be formulated in terms of QVI, see \citep{bensoussan1984impulse}. Thus we can model a density of particles, whose trajectories are given by optimal trajectories of an optimal switching problem, using the same technique as the one we have just presented in this section.

\part{ Mean field games of impulse control through quasi-variational inequalities}

We present in this part an application of the notion of solutions of a Fokker-Planck equation of jumping particles. We study a MFG of impulse control where the density of players is naturally a solution of this kind of equation. We work here in the case of a finite number of possible jumps. We denote by $K$ the set of jumps. We denote by $k$ satisfying $(\ref{hypk})$ the cost of the different jumps depending on the position. For any $v \in L^2((0,T), H^1(\mathbb{T}^d))$, we define $Mv$ by 
\[
Mv(t,x) = \inf_{\xi \in K} \{ k(x, \xi) + v(t,x+ \xi)\}.
\]
We denote by $f$ the running cost of the problem. The function $f$ depends on space, time and on the repartition of the player (i.e. $f = f(t,x,m)$). We make the following assumptions on $f$ :
\begin{itemize}
\item $f$ is continuous from $L^2((0,T), H^1(\mathbb{T}^d))$ endowed with its weak topology to $L^2((0,T), H^{-1}( \mathbb{T}^d))$.
\item  $f$ is uniformly bounded by below by a constant $-C$ (where $C>0$) on the positive elements of $L^2((0,T), H^1(\mathbb{T}^d))$
\item  $f$ maps $L^2((0,T), H^1(\mathbb{T}^d))$ to a bounded set of $L^p((0,T) \times \mathbb{T}^d)$ with $p > d$.
\end{itemize}
\begin{Rem}
The last assumption on $f$ can be replaced by any assumption which yields a uniform bound by above for the solution of the heat equation with source $f(m)$.
\end{Rem}
As in the previous part, we work on the $d$ dimensional torus to simplify the notations but all the following results are adaptable to more complex situations. We once again use the notation
\[
H = \{ v \in L^2((0,T), H^1(\mathbb{T}^d)) , \partial_t v \in L^2((0,T), H^{-1}(\mathbb{T}^d))\}.
\]
The problem we are interested in, is finding $(u,m)$ such that :

\begin{equation}\label{MFGsystem}
\begin{cases}
\max(-\partial_t u -  \nu \Delta u - f(m), u - Mu) = 0 \text{ in } (0,T) \times \mathbb{T}^d;\\
u(T) = 0 \text{ in } \mathbb{T}^d;\\
D(m) > - \infty; \\
\forall v \in H, v \leq Mv, v(T) = 0 :\\
\int_0^T(-\partial_t (v-u) -  \nu\Delta( v-u), m)_{H^{-1} \times H^1} - \int_{\mathbb{T}^d}(v-u)(0)m_0 \geq 0;\\
\int_0^T \int_{\mathbb{T}^d} (-\partial_t u -  \nu \Delta u -f(m))m = 0;
\end{cases}
\end{equation}
where $D(m)$ is defined by 
\begin{equation}\label{defDMFG}
D(m) := \inf_{v \in H_{ad}} \int_0^T (-\partial_t v -  \nu\Delta v, m)_{H^{-1} \times H^1} - \int_{\mathbb{T}^d}m_0 v(0);
\end{equation}
where $H_{ad} := \{ v \in H, v \leq Mv, v(T) = 0 \}$. The function $u$ denotes the value function of the impulse control problem for a generic player of the MFG and $m$ is the density of players.The first two lines have to be taken in the sense that $u$ is the solution of the associated QVI. Thus that it formally solves the impulse control problem for the generic player in which the running cost is $f(m)$ and $k$ is the cost for the jumps. We refer to the appendix for some details on $QVI$ and to \citep{bensoussan1984impulse} for a complete study of the problem. In view of the previous part, $m$ is a solution of a Fokker -Planck equation which models the density of (jumping) players of the game. Let us note that because there is no constraint such that $m = 0$ on $\{ u = Mu\}$, $m$ is not necessary the solution of a limit problem of the previous part with some $V$ (describing the jumps) well chosen depending on $u$. Indeed, in view of \citep{bertucci2017optimal}, we do not expect the existence of solutions if we impose such strong conditions which are assimilated with  Nash equilibria in pure strategies for the MFG. That is why we do not impose the condition $ m = 0 $ on $\{ u = Mu\}$ but the integral relation of the last line. It is the formulation of the fact that $(u,m)$ is a mixed solution of the MFG, i.e. that this system characterizes Nash equilibria in mixed strategies. We recall the interpretation of such a relation.\\
\\
Formally, a natural requirement for the solution of such a MFG shall be to impose that $m = 0 $ on $\{u = Mu\}$ which is the set where it is optimal to use an impulse control. The integral formulation in this system requires that $m =0$ on $\{ -\partial_t u - \Delta u < f(m)\}$ which is the set where it is strictly optimal to use an impulse control. The difference here is that on $\{ u = Mu\}$ one can still have $-\partial_t u - \Delta u = f(m)$ and thus that it is both optimal to stay and to use a control. We do not impose that $m$ vanishes in such a situation. Such a relaxation makes the problem more convex and allows us to prove an existence result while still conserving a uniqueness property.\\
\\
The methodology to work on $(\ref{MFGsystem})$ is the following : we first introduce a penalized version of this problem and then we show how we can pass to the limit to obtain the existence of solutions of $(\ref{MFGsystem})$. Later on we prove a result of uniqueness for such solutions.

\section{The penalized problem}
We introduce here the penalized problem :

\begin{equation}\label{penalizedMFG}
\begin{cases}
\max(-\partial_t u -  \nu\Delta u - f(m), u - Mu) = 0 \text{ in } (0,T) \times \mathbb{T}^d; \\
u(T) = 0 \text{ in } \mathbb{T}^d;\\
\partial_t m - \nu \Delta m + \frac{\alpha}{\epsilon} \mathbb{1}_{\{u = Mu\}} m - \sum_{\xi \in K} V(t,x- \xi,\xi) \frac{\alpha(t, x - \xi)}{\epsilon} \mathbb{1}_{\{u = Mu\}}(t, x - \xi) m(t, x- \xi)= 0 \\
\text{ in } (0,T)\times \mathbb{T}^d ;\\
m(0) = m_0 \text{ in } \mathbb{T}^d;\\
(u -Mu) + (-\partial_t u - \nu \Delta u - f(m)) \ne 0 \Rightarrow \alpha = 1 ;\\
\forall \xi \in K, u(t,x) \ne k(t,x,\xi) + u(t,x + \xi) \Rightarrow V(t,x,\xi) = 0 ; \\
\forall \xi \in K, \forall (t,x) \in (0,T) \times \mathbb{T}^d, 0 \leq V(t,x,\xi) \leq 1;\\
\forall (t,x) \in (0,T) \times \mathbb{T}^d, 0 \leq \alpha (t,x) \leq 1.
\end{cases}
\end{equation}
Recalling the previous part, it is natural to introduce first such a penalized system, and then pass to the limit $\epsilon$ goes to $0$. Indeed the equation satisfied by $m$ cannot be easily written in terms of a partial differential equation whereas it can at a penalized level. The potential $V$ gives at each point $(t,x)$ the jump used by the players at this point. If $V(t,x,\xi) \ne 0$ then some players use the jump $\xi$ at $(t,x)$. The term $\alpha$ adds convexity to the problem and make possible the existence of a solution ; it is the translation at a penalized level that we are looking for Nash equilibria of the MFG in mixed strategies. The technique here is inspired from \citep{bertucci2017optimal} where it is shown that such a system, for variational inequalities instead of quasi variational inequalities, leads to a solution of the MFG system. 

\begin{Theorem}
There exists a solution $(u,m, \alpha, V) \in (L^2((0,T), H^2(\mathbb{T}^d)) \cap H^1((0,T), L^2(\mathbb{T}^d))) \times L^2((0,T), H^1(\mathbb{T}^d)) \times L^{\infty}((0,T) \times \mathbb{T}^d) \times L^{\infty}(K \times (0,T) \times \mathbb{T}^d) $ of $(\ref{penalizedMFG})$ such that $u$ solves the associated QVI and the equation in $m$ is satisfied in a weak sense.
\end{Theorem}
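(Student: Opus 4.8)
The plan is to realize $(u,m,\alpha,V)$ as a fixed point of a set-valued map built on the density, coupling the QVI solver for $u$ with the linear Fokker--Planck solver of the previous part. Fix $\epsilon>0$ once and for all and work on the convex compact set
\[
\mathcal{C}=\{m\in L^2((0,T)\times\mathbb{T}^d):\ m\ge0,\ \|m\|_{L^2((0,T),H^1)}\le R\},
\]
the radius $R$ being fixed by the a priori estimate below; compactness of $\mathcal{C}$ for the strong $L^2((0,T)\times\mathbb{T}^d)$ topology follows from the bound in $H$ via Aubin--Lions. Given $m\in\mathcal{C}$, the source $f(m)$ is bounded in $L^p$, $p>d$, and bounded below, so the QVI $\max(-\partial_t u-\nu\Delta u-f(m),u-Mu)=0$, $u(T)=0$, admits a unique solution $u=u(m)\in L^2((0,T),H^2)\cap H^1((0,T),L^2)\cap L^\infty$ by the QVI theory recalled in the appendix (see Proposition~\ref{weakqvi} and \citep{bensoussan1984impulse}); the $L^p$ control of $f(m)$ yields, through the remark following the hypotheses on $f$, uniform $L^\infty$ bounds on $u$ and on $-\partial_t u-\nu\Delta u$.

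The decisive device for convexity is to reformulate the dynamics in terms of \emph{fluxes} rather than of $(\alpha,V)$ directly. For the fixed $u=u(m)$ I introduce unknowns $\beta\ge0$ (departure density) and $(\mu_\xi)_{\xi\in K}\ge0$ (arrival densities) constrained by: $\mathrm{supp}\,\beta\subset\{u=Mu\}$ with $\beta=\tfrac1\epsilon m$ on $\{u=Mu\}\cap\{-\partial_t u-\nu\Delta u<f(m)\}$ and $0\le\beta\le\tfrac1\epsilon m$ on the double-contact set $\{u=Mu\}\cap\{-\partial_t u-\nu\Delta u=f(m)\}$; $\mathrm{supp}\,\mu_\xi\subset\{u=k(\cdot,\xi)+u(\cdot+\xi)\}$; and $\sum_{\xi\in K}\mu_\xi\le\beta$, $0\le\mu_\xi\le\tfrac1\epsilon m$. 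For fixed $u$ these are affine constraints in $(m,\beta,\mu)$, so the admissible set is convex. Each admissible pair produces, via the linear equation
\[
\partial_t m'-\nu\Delta m'+\tfrac1\epsilon\beta-\tfrac1\epsilon\sum_{\xi\in K}\mu_\xi(\cdot-\xi)=0,\qquad m'(0)=m_0,
\]
a unique $m'\in H$, nonnegative by the maximum-principle argument of Proposition~\ref{maxonejump} and Lemma~\ref{max2}. Since $u(m)$ satisfies $u\le Mu$ and, on the supports of the $\mu_\xi$, the relation $V(\xi)(k(\cdot,\xi)+u(\cdot+\xi)-u)=0$ of hypothesis~\ref{hypmultiple}, Lemma~\ref{estimate2} supplies the uniform $L^2((0,T),H^1)$ bound that fixes $R$. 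Define $\Phi(m)$ to be the set of all such $m'$. As $(\beta,\mu)\mapsto m'$ is affine and the admissible set convex, $\Phi(m)$ is convex, and it is $L^2$-compact by the parabolic estimates; it is nonempty (take $\beta=\tfrac1\epsilon\mathbb{1}_{\{u=Mu\}}m$ with any measurable selection of $\mu$ distributing $\beta$ over the active jumps).

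At a fixed point $m\in\Phi(m)$ one recovers $\alpha:=\epsilon\beta/m\in[0,1]$ and $V_\xi:=\mu_\xi/\beta\in[0,1]$ (on $\{m>0\}$ and $\{\beta>0\}$, completed arbitrarily within $[0,1]$ elsewhere, up to the usual null-set adjustments); the support and saturation constraints translate precisely into the compatibility lines of $(\ref{penalizedMFG})$, and the flux equation becomes the Fokker--Planck line of the system, solved in the weak sense. Existence of a fixed point then follows from the Kakutani--Fan--Glicksberg theorem, provided $\Phi$ is upper semicontinuous, i.e.\ has closed graph: given $m_n\to m$ in $L^2$ and $m_n'\in\Phi(m_n)$ with $m_n'\to m'$, one must exhibit admissible fluxes for $u(m)$ whose solution is $m'$.

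The \emph{main obstacle} is exactly this closed-graph property, because the constraints are supported on the contact and continuation regions determined by $u(m)$, and these can vary discontinuously with $m$. Weak-$*$ compactness of $(\beta_n,\mu_n)$ in $[0,\tfrac1\epsilon m]$ furnishes candidate limits, and linearity of the flux equation lets the weak limits of the sources pass to $m'$; the delicate part is to verify that the limiting fluxes remain admissible for $u(m)$. The support condition $\mathrm{supp}\,\mu_\xi\subset\{u=k(\cdot,\xi)+u(\cdot+\xi)\}$ and, above all, the strict-contact condition forcing $\beta=\tfrac1\epsilon m$ off the double-contact set are closed conditions that are \emph{not} stable under mere weak convergence of the source. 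To close the argument one must upgrade the convergence of $u(m_n)$: the $f$-continuity (weak $L^2((0,T),H^1)$ to $L^2((0,T),H^{-1})$) together with the $L^2((0,T),H^2)\cap H^1((0,T),L^2)$ regularity and uniqueness for the QVI force strong convergence of $u(m_n)$ and of $-\partial_t u(m_n)-\nu\Delta u(m_n)$, hence a.e.\ convergence, which yields the semicontinuity of the contact and continuation regions needed to pass to the limit in all the constraints. This semicontinuity step, resting entirely on the QVI regularity theory of the appendix, is where I expect the real work to lie.
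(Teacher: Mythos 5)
Your overall strategy is the same as the paper's: construct the solution as a Kakutani fixed point of a set-valued map obtained by composing the QVI solution map $m \mapsto u(m)$ with the (set-valued) solution map of the penalized Fokker--Planck equation, and reduce everything to an upper semicontinuity property. Your flux reformulation $(\beta,\mu_\xi)$ is a clean way of making the convexity of the values explicit (the paper simply declares that point trivial). But the decisive step --- upper semicontinuity, equivalently the closed-graph property of $\Phi$ --- is precisely what you do not prove: you defer it to ``where the real work lies,'' and the mechanism you sketch to close it does not hold. You claim that QVI regularity and uniqueness ``force strong convergence of $u(m_n)$ \emph{and} of $-\partial_t u(m_n)-\nu\Delta u(m_n)$, hence a.e.\ convergence.'' The stability result actually available (Proposition \ref{continuityqvi}) gives convergence of $u(m_n)$ to $u(m)$ in $L^2((0,T),H^1(\mathbb{T}^d))$ only; the functions $-\partial_t u(m_n)-\nu\Delta u(m_n)$ are merely \emph{bounded} in $L^2$ by (\ref{reg2qvi}), and boundedness plus uniqueness of the limit yields weak convergence, never strong or a.e.\ convergence. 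Likewise $f(m_n)\to f(m)$ holds only in $L^2((0,T),H^{-1}(\mathbb{T}^d))$, which carries no pointwise information. Weak convergence is genuinely insufficient for your purpose: the saturation constraint $\beta=\tfrac1\epsilon m$ on $\{u=Mu\}\cap\{-\partial_t u-\nu\Delta u<f(m)\}$ requires you to exclude that $f(m_n)+\partial_t u(m_n)+\nu\Delta u(m_n)$ oscillates between $0$ and strictly positive values on a non-negligible portion of that set, and oscillating sequences of exactly this kind are compatible with weak convergence. (A smaller, fixable slip: compactness of your set $\mathcal{C}$ in strong $L^2$ via Aubin--Lions needs a time-derivative bound, which elements of $\mathcal{C}$ as you define it do not have; one must work with the image $\Phi(\mathcal{C})$ instead.)

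For comparison, the paper does not pass to the limit in the constraints along a sequence at all. Given $m'$ close to $m$ and an element $\tilde m_2\in\mathcal{F}(m')$ with coefficients $(\alpha_2,V_2)$, it \emph{transplants} these coefficients onto the contact sets of $u_1=\mathcal{F}_1(m)$ (keeping them equal on the coincidence sets and prescribing admissible values elsewhere), solves the resulting linear equation to produce $\tilde m_1\in\mathcal{F}(m)$, and shows by a Gronwall estimate that $\tilde m_1-\tilde m_2$ is small because the mismatch terms on the right-hand side are supported on sets of small measure. For the sets of the type $\{u_1<Mu_1\}\cap\{u_2=Mu_2\}$ this smallness does follow from the $L^2(H^1)$ convergence alone, by a Chebyshev argument, because the strict inequality is carried by the \emph{fixed} function $u_1$. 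The remaining set, $\{-\partial_t u_1-\nu\Delta u_1<f(m)\}\cap\{-\partial_t u_2-\nu\Delta u_2=f(m')\}$, is of exactly the delicate nature you identified, so you have put your finger on the crux of the proof; but locating the difficulty is not resolving it, and the specific resolution you propose --- strong $L^2$ convergence of the parabolic operator along QVI solutions --- is not provided by, and does not follow from, the theory you invoke.
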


\begin{proof}
We define an application $\mathcal{F}_1$ from $L^2((0,T),H^1(\mathbb{T}^d))$ (endowed with its weak topology) into itself (endowed with its strong topology) by : for any $m \in L^2((0,T), H^1(\mathbb{T}^d))$, $\mathcal{F}_1(m)$ is the only solution of the QVI with costs $f(m)$ and $k$. $\mathcal{F}_1$ is well defined and continuous recalling results on QVI (see appendix). Then, we define the set-valued function $\mathcal{F}_2$ from $L^2((0,T) \times \mathbb{T}^d)$ to $L^2((0,T), H^1(\mathbb{T}^d))$ (endowed with its weak topology) by : for any $u \in L^2$, 

\[
\mathcal{F}_2(u) := \{ m \in L^2((0,T),H^1( \mathbb{T}^d)), \exists \alpha, V \in L^{\infty}, (m,\alpha,V) \text{ solves } (\ref{penalizedMFG})(u)\};
\]
where $(\ref{penalizedMFG})(u)$ is
\begin{equation*}{(\ref{penalizedMFG})(u)}
\begin{cases}
\begin{aligned}
\partial_t m &-  \nu\Delta m  - \sum_{\xi \in K} V(t,x- \xi,\xi) \frac{\alpha(t, x - \xi)}{\epsilon} \mathbb{1}_{\{u = Mu\}}(t, x - \xi) m(t, x- \xi) + \\
& + \frac{\alpha}{\epsilon} \mathbb{1}_{\{u = Mu\}} m= 0 \text{ in } (0,T) \times \mathbb{T}^d; \end{aligned}\\
m(0) = m_0 \text{ in } \mathbb{T}^d;\\
(u -Mu) + (-\partial_t u -  \nu\Delta u - f(m)) \ne 0 \Rightarrow \alpha = 1; \\
\forall \xi \in K, u(t,x) \ne k(t,x,\xi) + u(t,x + \xi) \Rightarrow V(t,x,\xi) = 0 ;\\
0 \leq V(t,x,\xi) \leq 1.
\end{cases}
\end{equation*}
As we want to apply Kakutani's fixed point theorem on $\mathcal{F} := \mathcal{F}_2 \circ \mathcal{F}_1$, we need to prove that $\mathcal{F}$ is upper semicontinuous and that it is valued in the set of convex and closed subsets of $L^2((0,T), H^1(\mathbb{T}^d))$. As the last point is trivial, we focus in this proof on the upper semicontinuity. We recall that a set valued function $F$ from $A$ to $B$ is upper semicontinuous if for any open set $\mathcal{O} \subset B$, $\{x \in A, F(x) \subset \mathcal{O}\}$ is open in $A$. Let us take an open set $\mathcal{O} \subset L^2$ and $m \in L^2((0,T), H^1(\mathbb{T}^d))$ such that $\mathcal{F}(m) \subset \mathcal{O}$. In view of the previous part (namely lemma \ref{estimate2}), we can affirm that $\mathcal{F}(m)$ is a compact subset of $L^2((0,T),H^1( \mathbb{T}^d))$ endowed with its weak topology, hence 
\[
d := dist(\mathcal{F}(m), \mathcal{O}^c) > 0.
\]
Let us take $\delta > 0$ and $m' \in L^2((0,T), H^1(\mathbb{T}^d))$ such that $||m - m'||_{L^2(H^1)} \leq \delta$. We now prove that if $\delta$ is small enough, then $\mathcal{F}(m') \subset \mathcal{O}$, and thus that $\mathcal{F}$ is upper semi continuous. First, we define $u_1$ and $u_2$ by
\[
u_1 = \mathcal{F}_1(m) ; 
u_2 = \mathcal{F}_1(m').
\]
For any $µ_2 \in \mathcal{F}(m')$, there exists $(\alpha_2, V_2)$ such that $(µ_2, \alpha_2, V_2)$ solves
\[
\begin{cases}
\begin{aligned}
\partial_t µ_2 &-  \nu\Delta µ_2  - \sum_{\xi \in K} V_2(t,x- \xi,\xi) \frac{\alpha_2(t, x - \xi)}{\epsilon} \mathbb{1}_{\{u_2 = Mu_2\}}(t, x - \xi) µ_2(t, x- \xi) + \\
&+ \frac{\alpha_2}{\epsilon} \mathbb{1}_{\{u_2 = Mu_2\}} µ_2= 0 \text{ in } (0,T) \times \mathbb{T}^d; \end{aligned}\\
µ_2(0) = m_0 \text{ in } \mathbb{T}^d.\\
\end{cases}
\]
Now we define $\alpha_1$ on $\{u_1 = Mu_1\} \cap \{ - \partial_t u_1 -  \nu \Delta u_1 = f(m)\}$ by

\begin{itemize}
\item $\alpha_1 = \alpha_2 $ on $ \{u_2 = Mu_2 \} \cap \{ - \partial_t u_2 -   \nu \Delta u_2 = f(m')\} $;
\item $\alpha _1 = 1$ on $\{-\partial_t u_2 -  \nu \Delta u_2 < f(m')\}$;
\item $\alpha_1 = 0 $ on $\{ u_2 < Mu_2\}$;
\end{itemize}
and we set $\alpha_1 = 1$ elsewhere. Now we define $V_1$ by
\begin{itemize}
\item $\forall (t,x,\xi)$ such that $u_i (t,x,\xi) = k(t,x,\xi) + u_i(t,x,x + \xi)$, for $i \in \{1 ; 2\}$, then 
\[
V_1(t,x,\xi) = V_2(t,x,\xi);
\]
\item elsewhere the value of $V_1$ does not matter so we just define it in such a way that it satisfies 
\[
\begin{cases}
\forall \xi \in K, u(t,x) \ne k(t,x,\xi) + u(t,x,x + \xi) \Rightarrow V(t,x,\xi) = 0 ;\\
0 \leq V(t,x,\xi) \leq 1;
\end{cases}
\]
which is aways possible.
\end{itemize}
Let us define $µ_1$ as the unique solution of 
\[
\begin{cases}
\begin{aligned}
\partial_t µ_1 &- \nu \Delta µ_1  - \sum_{\xi \in K} V_1(t,x- \xi,\xi) \frac{\alpha_1(t, x - \xi)}{\epsilon} \mathbb{1}_{\{u_1 = Mu_1\}}(t, x - \xi) µ_1(t, x- \xi) + \\
& + \frac{\alpha_1}{\epsilon} \mathbb{1}_{\{u_1 = Mu_1\}} µ_1= 0 \text{ in } (0,T) \times \mathbb{T}^d;\end{aligned}\\
µ_1(0) = m_0 \text{ in } \mathbb{T}^d.
\end{cases}
\]
By construction, $µ_1 \in \mathcal{F}(m)$. We then define $µ := µ_1 - µ_2$. Once again by construction, $µ(0) = 0$ and $µ$ satisfies 
\[
\begin{aligned}
&\partial_t µ -  \nu \Delta µ + \frac{\alpha_2}{\epsilon} \mathbb{1}_A(\sum_{\xi \in K} V_2)µ  - \sum_{\xi \in K} V_2(t,x- \xi,\xi) \frac{\alpha_2(t, x - \xi)}{\epsilon} \mathbb{1}_{A}(t, x - \xi) µ(t, x- \xi)\\
& = \frac{\alpha_2}{\epsilon} \mathbb{1}_{\{u_2 = Mu_2\} \cap \{ u_1 < Mu_1\}} (\sum_{\xi \in K}V_2)µ_2\\
& - \sum_{\xi \in K} V_2(t,x- \xi,\xi) \frac{\alpha_2(t, x - \xi)}{\epsilon} \mathbb{1}_{\{u_2 = Mu_2\} \cap \{ u_1 < Mu_1\}}(t, x - \xi) µ_2(t, x- \xi)\\
& + \frac{\alpha_2}{\epsilon} \mathbb{1}_{\{u_2 = Mu_2\} \cap \{ u_1 = Mu_1\} \cap A^c} (\sum_{\xi \in K}V_2)µ_2\\
& - \sum_{\xi \in K} V_2(t,x- \xi,\xi) \frac{\alpha_2(t, x - \xi)}{\epsilon} \mathbb{1}_{\{u_2 = Mu_2\} \cap \{ u_1 = Mu_1\}\cap A ^c}(t, x - \xi) µ_2(t, x- \xi)\\
& - \frac{\alpha_1}{\epsilon} \mathbb{1}_{\{u_2 = Mu_2\} \cap \{ u_1 = Mu_1\} \cap A^c} (\sum_{\xi \in K}V_1)µ_1\\
& + \sum_{\xi \in K} V_1(t,x- \xi,\xi) \frac{\alpha_1(t, x - \xi)}{\epsilon} \mathbb{1}_{\{u_2 = Mu_2\} \cap \{ u_1 = Mu_1\} \cap A^c}(t, x - \xi) µ_1(t, x- \xi)\\
&- \frac{1}{\epsilon} \mathbb{1}_{\{P(u_2) = f(m')\} \cap \{ P(u_1) < f(m)\} \cap A^c} (\sum_{\xi \in K}V_1)µ_1\\
& + \sum_{\xi \in K} V_1(t,x- \xi,\xi) \frac{1}{\epsilon} \mathbb{1}_{\{P(u_2) = f(m')\} \cap \{ P(u_1) < f(m)\} \cap A^c}(t, x - \xi) µ_1(t, x- \xi).
\end{aligned}
\]
Where we have used for $A$ the set of coincidence :
\[
A := \{(t,x)/ \forall \xi,V_2(t,x,\xi) \alpha_2(t,x,\xi) \mathbb{1}_{\{u_2 = Mu_2\}}(t,x) = V_1(t,x,\xi) \alpha_1(t,x,\xi) \mathbb{1}_{\{u_1 = Mu_1\}}(t,x)  \}.
\]
The operator $P$ is here defined by being the parabolic operator :
\[
P(u) := -\partial_t u -  \nu\Delta u.
\]
Let us remark that all the terms in the right hand side of this expression are the ones which involve different coefficients (i.e. different $\alpha$ and $V$) in front of $µ_1$ and $µ_2$. We can also note that all the terms of the right hand side are multiplied by a characteristic function of a subsets of which the Lebesgue measure goes to $0$ as $||m' - m||_{L^2(H^1)}$ goes to $0$. Indeed, because $\mathcal{F}_1$ is continuous, 

\[
||u_2 - u_1||_{L^2((0,T),H^1(\mathbb{T}^d))} \rightarrow_{\delta \rightarrow 0} 0.
\]
Hence, taking $\delta$ sufficiently small, we obtain that the Lebesgue measure of the following sets are as small as we want :

\begin{itemize}
\item for all $\xi_1 \ne \xi_2 \in K$ : 
\[
\cap_{i \ne j}\{ u_i(t,x) = k(t,x,\xi_i) + u_i(t,x + \xi_i)\} \cap \{ u_j(t,x) < k(t,x,\xi_i) + u_j(t,x + \xi_i)\}
\]
\item $\{P(u_1) < f(m)\} \cap \{ P(u_2) = f(m')\}$
\item $\{u_1 < Mu_1\} \cap \{ u_2 = Mu_2\}$.
\end{itemize}
Thus because $µ_1$ and $µ_2$ are bounded in $L^2((0,T) \times \mathbb{T}^d)$ independently of $\delta$ (lemma \ref{estimate2}), we deduce that taking $\delta$ sufficiently small, the right hand side of the previous equation is as small as necessary in $L^2((0,T) \times \mathbb{T}^d)$. Thus, we fix $\eta > 0$ and we choose $\delta$ such that the right hand side is smaller than $\eta$ in the $L^2((0,T) \times \mathbb{T}^d)$ norm. Multiplying by $µ$ and integrating over $\mathbb{T}^d$ the equation $µ$ satisfies, we deduce that
\[
\frac{1}{2}\frac{d}{dt} ||µ(t)||^2_{L^2(\mathbb{T}^d)} \leq \frac{1}{\epsilon} \#(K) ||V ||_{L^{\infty}} ||µ||_{L^2(\mathbb{T}^d)}^2 + \eta ||µ||_{L^2(\mathbb{T}^d)};
\]
where $\#(K)$ stands for the cardinal of the set $K$. From this inequality, it follows that 
\[
\frac{d}{dt} ||µ(t)||_{L^2(\mathbb{T}^d)} \leq \frac{1}{\epsilon} \#(K) ||V ||_{L^{\infty}} ||µ||_{L^2(\mathbb{T}^d)} + \eta.
\]
Hence, we conclude with Gronwall's lemma that taking $\eta$ small enough, $µ$ is as small as necessary in $L^{\infty}((0,T), L^2(\mathbb{T}^d))$ (we recall that $µ(0) = 0$). Because of the partial differential equations satisfied by $µ$, it follows that taking $\delta$ small enough :
\[
||µ||_{L^2((0,T), H^1(\mathbb{T}^d))} < \frac{d}{2};
\]
which proves that $µ_2 \in \mathcal{O}$ and thus that $\mathcal{F}(m') \subset \mathcal{O}$. Hence $\mathcal{F}$ is upper semi continuous and we deduce from Kakutani's fixed point theorem the existence of a solution.
\end{proof}

\section{Existence of solutions of the MFG system}
In this section we present the existence of solutions of the MFG system $(\ref{MFGsystem})$. The proof of this result consists in passing to the limit in the penalized system. Let us remark that in the first part of this article, we use either hypothesis  \ref{hyp1} or \ref{hypmultiple} to pass to the limit $\epsilon \to 0$ in a Fokker-Planck equation of jumping particles. Here such an assumption is no more required, as the jumps the players are using, are by definition optimal for a certain optimization problem. Obviously this problem is the optimization problem the players have to solve. 

\begin{Theorem}
There exists a solution $(u,m) \in L^2((0,T), H^2(\mathbb{T}^d)) \cap H^1((0,T), L^2(\mathbb{T}^d)) \times L^2((0,T), H^1(\mathbb{T}^d))$ of $(\ref{MFGsystem})$. 
\end{Theorem}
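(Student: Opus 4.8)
The plan is to obtain the solution $(u,m)$ of $(\ref{MFGsystem})$ by passing to the limit $\epsilon \to 0$ in the penalized system $(\ref{penalizedMFG})$, whose solutions $(u_\epsilon, m_\epsilon, \alpha_\epsilon, V_\epsilon)$ are provided by the previous theorem. The whole difficulty lies in establishing uniform a priori bounds and in identifying the limit. First I would bound $u_\epsilon$: since $u_\epsilon$ solves the QVI with running cost $f(m_\epsilon)$ and $f$ maps $L^2((0,T),H^1(\mathbb{T}^d))$ into a bounded subset of $L^p$ with $p>d$, the regularity theory for QVI (appendix) yields a bound on $u_\epsilon$ in $L^2((0,T),H^2(\mathbb{T}^d)) \cap H^1((0,T),L^2(\mathbb{T}^d)) \cap L^\infty$ that is uniform in $\epsilon$; in particular $-\partial_t u_\epsilon - \nu\Delta u_\epsilon$ is bounded in $L^2$ and $u_\epsilon(0)$ in $L^\infty$. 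The second, and more delicate, step is to bound $m_\epsilon$ in $L^2((0,T),H^1(\mathbb{T}^d))$.

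For the estimate on $m_\epsilon$, the key observation is that hypothesis \ref{hypmultiple} is automatically satisfied by $(k,u_\epsilon)$ with the weights $V_\epsilon$: indeed $u_\epsilon \le Mu_\epsilon$, $u_\epsilon(T)=0$, and on the support of $V_\epsilon(\xi,\cdot)$ one has $u_\epsilon(t,x) = k(x,\xi) + u_\epsilon(t,x+\xi)$ by the very constraints of $(\ref{penalizedMFG})$. Consequently $u_\epsilon$ is admissible in the definition of $D(k,\cdot)$, and lemma \ref{estimate2} applies, giving
\[
\|m_\epsilon\|_{L^2((0,T),H^1)}^2 \le -D(k,m_\epsilon) + C\,\|m_\epsilon\|_{L^2((0,T),H^1)}\,\|m_0\|_{L^2},
\]
with $C$ depending only on $k$ and $\|u_\epsilon\|_{L^\infty}$, hence uniform. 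It remains to bound $-D(k,m_\epsilon)$, which equals the total penalized jump cost, a nonnegative quantity since $k \ge k_0 > 0$. Testing the Fokker--Planck equation of $(\ref{penalizedMFG})$ against $u_\epsilon$ and integrating by parts in time (using $u_\epsilon(T)=0$) yields, exactly as in the previous part, the identity
\[
-D(k,m_\epsilon) = \int_{\mathbb{T}^d} u_\epsilon(0)\, m_0 - \int_0^T \int_{\mathbb{T}^d}(-\partial_t u_\epsilon - \nu\Delta u_\epsilon)\,m_\epsilon .
\]
Since $\int_{\mathbb{T}^d} m_\epsilon(t)$ is conserved (integrate the equation in space), $m_\epsilon$ has bounded mass, and together with the bounds on $u_\epsilon(0)$ in $L^\infty$ and on $-\partial_t u_\epsilon - \nu\Delta u_\epsilon$ in $L^2$ this gives $-D(k,m_\epsilon) \le C(1 + \|m_\epsilon\|_{L^2((0,T),H^1)})$. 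Plugging this back into the inequality above closes the estimate and bounds $m_\epsilon$ in $L^2((0,T),H^1(\mathbb{T}^d))$ uniformly.

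With these bounds I would extract a subsequence along which $m_\epsilon \rightharpoonup m$ weakly in $L^2((0,T),H^1(\mathbb{T}^d))$, $u_\epsilon \to u$ strongly in $L^2((0,T),H^1)$ by the continuity of the QVI solution map $\mathcal{F}_1$ and weakly in the higher regularity space, and $\alpha_\epsilon, V_\epsilon$ converge weakly-$\ast$ in $L^\infty$. The weak continuity of $f$ from $L^2((0,T),H^1)$ to $L^2((0,T),H^{-1})$ together with the continuity of $\mathcal{F}_1$ ensures that $u = \mathcal{F}_1(m)$, i.e. $u$ solves the QVI with cost $f(m)$; this gives the first two lines of $(\ref{MFGsystem})$. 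Passing to the limit in the penalized inequality
\[
\int_0^T(-\partial_t v - \nu\Delta v, m_\epsilon)_{H^{-1}\times H^1} - \int_{\mathbb{T}^d} v(0)m_0 \ge D(k,m_\epsilon),
\]
valid for every admissible $v$ (every $v\in H$ with $v \le Mv$ is admissible), yields at once $D(m) > -\infty$ and the variational characterization of $m$ (fourth line of $(\ref{MFGsystem})$), since the left-hand side is linear in $m_\epsilon$ and $D(m_\epsilon)$ stays bounded.

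The main obstacle is the last, complementarity relation $\int_0^T\int_{\mathbb{T}^d}(-\partial_t u - \nu\Delta u - f(m))m = 0$ and, more precisely, the passage to the limit in the bilinear quantity $\int_0^T(-\partial_t u_\epsilon - \nu\Delta u_\epsilon, m_\epsilon)_{H^{-1}\times H^1}$: both factors converge only weakly in $L^2$, so the product of the limits need not be the limit of the products. I expect to overcome this exactly as in \citep{bertucci2017optimal}, exploiting the strong convergence of $u_\epsilon$ in $L^2((0,T),H^1)$ and the weak-$\ast$ limits of $(\alpha_\epsilon, V_\epsilon)$ to identify the limiting jump term in the Fokker--Planck equation, which encodes that players jump only where it is optimal and delivers the relaxed, mixed-strategy complementarity. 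The one-sided inequality $-\partial_t u - \nu\Delta u - f(m) \le 0$ with $m\ge 0$ already gives $\int_0^T\int_{\mathbb{T}^d}(-\partial_t u - \nu\Delta u - f(m))m \le 0$; the reverse inequality is the genuinely delicate point and follows from the limiting form of the energy identity above.
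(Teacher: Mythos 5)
Your proposal follows the paper's proof almost step for step: solve (\ref{penalizedMFG}), bound $u_\epsilon$ uniformly via QVI regularity and the $L^p$ assumption on $f$, observe that hypothesis \ref{hypmultiple} is automatically satisfied by $(k,u_\epsilon,V_\epsilon)$ so that lemma \ref{estimate2} applies with a constant depending only on $\|u_\epsilon\|_{L^\infty}$, extract a weak limit $m$, identify $u$ as the solution of the QVI with cost $f(m)$ via proposition \ref{continuityqvi} and the weak continuity of $f$, and pass to the limit in the penalized variational inequality. Your way of closing the estimate on $m_\epsilon$ --- bounding $-D(k,m_\epsilon)=\int_{\mathbb{T}^d}u_\epsilon(0)m_0-\int_0^T\int_{\mathbb{T}^d}(-\partial_t u_\epsilon-\nu\Delta u_\epsilon)m_\epsilon$ by Cauchy--Schwarz, using the uniform $L^2$ bound on $-\partial_t u_\epsilon-\nu\Delta u_\epsilon$ from (\ref{reg2qvi}) and the mass bound on $m_\epsilon$ --- is a legitimate variant of the paper's, and in fact the cleaner one: the paper's displayed chain at this point appears to trade $-D(m_\epsilon)$ for $D(m_\epsilon)$ before invoking $-\partial_t u_\epsilon-\nu\Delta u_\epsilon\le f(m_\epsilon)$, whereas lemma \ref{estimate2} controls $\|m_\epsilon\|^2$ by $-D(m_\epsilon)$; your version is coherent with that sign. (Small caveat: since (\ref{penalizedMFG}) does not explicitly normalize $\sum_{\xi\in K}V_\epsilon=1$ on the jump set, mass is a priori only monotone rather than conserved; a bound on $\int_{\mathbb{T}^d}m_\epsilon(t)$ is all you use, so this is harmless.)

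The genuine gap is at the end, and it concerns exactly the two points you delegate to ``the approach of \citep{bertucci2017optimal}''. First, the complementarity relation. The paper's mechanism is concrete and quantitative, and your sketch does not contain it: by the constraints of (\ref{penalizedMFG}), on the set $\{-\partial_t u_\epsilon-\nu\Delta u_\epsilon<f(m_\epsilon)\}$ one has $u_\epsilon=Mu_\epsilon$ and, crucially, $\alpha_\epsilon=1$ is \emph{forced}; combined with $k\ge k_0>0$, the jump-cost representation of $D$ gives
\[
-D(m_\epsilon)\;=\;\frac{1}{\epsilon}\int_0^T\int_{\mathbb{T}^d}\Bigl(\sum_{\xi\in K}V_\epsilon k\Bigr)\alpha_\epsilon m_\epsilon\;\ge\;\frac{k_0}{\epsilon}\int_0^T\int_{\mathbb{T}^d}m_\epsilon\,\mathbb{1}_{\{-\partial_t u_\epsilon-\nu\Delta u_\epsilon<f(m_\epsilon)\}},
\]
and the boundedness of $(D(m_\epsilon))_\epsilon$ then shows that the $m_\epsilon$-mass of the set where waiting is strictly suboptimal is $O(\epsilon)$. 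This is what makes $\int_0^T\int_{\mathbb{T}^d}(-\partial_t u_\epsilon-\nu\Delta u_\epsilon-f(m_\epsilon))m_\epsilon$ vanish in the limit; your claim that the reverse inequality ``follows from the limiting form of the energy identity'' does not supply this, and nothing else in your argument prevents the limit density from charging the region where jumping is strictly optimal. Second, the bilinear passage to the limit in $\int_0^T(-\partial_t u_\epsilon-\nu\Delta u_\epsilon,m_\epsilon)_{H^{-1}\times H^1}-\int_{\mathbb{T}^d}u_\epsilon(0)m_0$, which you correctly flag as the obstacle but leave unresolved: the paper's argument is that the uniform $L^p$ bound ($p>d$) on $f(m_\epsilon)$ yields uniform H\"older-in-time estimates on $u_\epsilon$, whence $u_\epsilon(0)\to u(0)$ in $L^2$ and $\int_0^T\int_{\mathbb{T}^d}m_\epsilon\,\partial_t u_\epsilon\to\int_0^T(\partial_t u,m)_{H^{-1}\times H^1}$, while the term $\nu\int_0^T\int_{\mathbb{T}^d}\nabla u_\epsilon\cdot\nabla m_\epsilon$ passes thanks to the strong $L^2((0,T),H^1)$ convergence of $u_\epsilon$ given by proposition \ref{continuityqvi}. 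The weak-$\ast$ limits of $(\alpha_\epsilon,V_\epsilon)$ that you propose to exploit play no role in the paper's proof and would not, by themselves, give either of these two facts.
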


\begin{proof}
For $\epsilon > 0$ we denote by $(u_{\epsilon}, m_{\epsilon}, \alpha_{\epsilon},V_{\epsilon})$ a solution of the penalized system $(\ref{penalizedMFG})$. We first show some compactness for the sequence $(m_{\epsilon})_{\epsilon > 0}$. Let us remark that 
\[
\begin{aligned}
D(m_{\epsilon}) &= \int_0^T \int_{\mathbb{T}^d} (-\partial_t u_{\epsilon} -  \nu\Delta u_{\epsilon})m_{\epsilon} - \int_{\mathbb{T}^d}u_{\epsilon}(0)m_0.
\end{aligned}
\]
Furthermore, because of lemma $\ref{estimate2}$,
\[
- D(m_{\epsilon}) + C_{\epsilon} ||m_0||_{L^2} ||m_{\epsilon}||_{L^2((0,T), H^1)} \geq ||m_{\epsilon}||_{L^2((0,T), H^1)}^2;
\]
where $C_{\epsilon}$ only depends on $||u_{\epsilon}||_{L^{\infty}}$ (and on $k$ which is fixed here). We then deduce from the assumptions we made on $f$, that there exists $C$ independent of $\epsilon$ such that :
\[
\begin{aligned}
||m_{\epsilon}||_{L^2((0,T), H^1)}^2 &\leq \int_0^T \int_{\mathbb{T}^d} (-\partial_t u_{\epsilon} -  \nu\Delta u_{\epsilon})m_{\epsilon} - \int_{\mathbb{T}^d}u_{\epsilon}(0)m_0 + C ||m_0||_{L^2} ||m_{\epsilon}||_{L^2((0,T), H^1)};\\
& \leq \int_0^T \int_{\mathbb{T}^d} f(m_{\epsilon})m_{\epsilon} + C ||m_0||_{L^2} ||m_{\epsilon}||_{L^2((0,T), H^1)}.
\end{aligned}
\]
Thus we deduce from the assumptions we made on $f$ that $(m_{\epsilon})_{\epsilon > 0}$ is a bounded sequence of $L^2((0,T), H^1(\mathbb{T}^d))$. So there exists $m \in L^2((0,T), H^1(\mathbb{T}^d))$ such that, extracting a subsequence if necessary, $(m_{\epsilon})_{\epsilon}$ weakly converges toward $m$ in $ L^2((0,T), H^1(\mathbb{T}^d))$. Because $f$ is continuous for the weak topology, we deduce from lemma \ref{continuityqvi} (see appendix), that $(u_{\epsilon})_{\epsilon}$ converges toward $u$ solution of the quasi variational inequality associated to 
\[
\begin{cases}
\max(-\partial_t u -  \nu \Delta u - f(m), u - Mu) = 0 \text{ in } (0,T) \times \mathbb{T}^d;\\
u(T) = 0 \text{ in } \mathbb{T}^d.
\end{cases}
\]
Moreover, by passing to the limit in 
\begin{equation}\label{optmmfg}
\begin{aligned}
&\forall v \in H, v \leq Mv, v(T) = 0 :\\
& \begin{aligned}\int_0^T(-\partial_t v -  \nu \Delta v, m_{\epsilon})_{H^{-1} \times H^1} -\int_{\mathbb{T}^d}v(0)m_0 &\geq \int_0^T(-\partial_t u_{\epsilon} -  \nu\Delta u_{\epsilon}, m_{\epsilon})_{H^{-1} \times H^1} - \int_{\mathbb{T}^d}u_{\epsilon}(0)m_0\\
&= D(m_{\epsilon});\end{aligned}
\end{aligned}
\end{equation}
we obtain
\[
\begin{cases}
D(m) > - \infty ; \\
\begin{aligned}
&\forall v, v \leq Mv, v(T) = 0 : \\
&\begin{aligned}\int_0^T(-\partial_t v - \nu \Delta v, m)_{H^{-1} \times H^1} - \int_{\mathbb{T}^d}v(0)m_0 &\geq \int_0^T(-\partial_t u - \nu \Delta u, m)_{H^{-1} \times H^1} - \int_{\mathbb{T}^d}u(0)m_0\\&= D(m).\end{aligned}
\end{aligned}
\end{cases}
\]
Let us note that we can pass to the limit in the right hand side of (\ref{optmmfg}) because the uniform bounds on $f$ yields some uniform H\"older estimates in time for $u_{\epsilon}$. Thus, we can easily deduce that extracting a subsequence if necessary :
\[
u_{\epsilon}(0) \underset{\epsilon \to 0}{\overset{L^2}{\longrightarrow}} u(0);
\]
\[
\int_0^T\int_{\mathbb{T}^d}m_{\epsilon}\partial_t u_{\epsilon} \underset{\epsilon \to 0}{\longrightarrow} \int_0^T\int_{\mathbb{T}^d}(\partial_t u,m)_{H^{-1}\times H^1}.
\]
Now let us remark that for all $\epsilon > 0$,
\[
\begin{aligned}
 - D(m_{\epsilon}) &= \frac{1}{\epsilon}\int_0^T \int_{\mathbb{T}^d} (\sum_{\xi \in K} V(t,x,\xi) k(x,\xi))\alpha_{\epsilon}(t,x) m_{\epsilon}(t,x)dx dt ;\\
 & \geq \frac{1}{\epsilon}\int_0^T \int_{\mathbb{T}^d} (\sum_{\xi \in K} V(t,x,\xi)) k_0 m_{\epsilon}(t,x) \mathbb{1}_{\{-\partial_t u_{\epsilon} -  \nu\Delta u_{\epsilon} < f(m_{\epsilon})\}}(t,x) dx dt; \\
 & \geq \frac{k_0}{\epsilon}\int_0^T \int_{\mathbb{T}^d} m_{\epsilon} \mathbb{1}_{\{-\partial_t u_{\epsilon} -  \nu\Delta u_{\epsilon} < f(m_{\epsilon})\}}.
\end{aligned}
\]
Since $(D(m_{\epsilon}))_{\epsilon > 0}$ is a bounded sequence (c.f. (\ref{optmmfg})), we deduce that 
\[
\big{(} \frac{k_0}{\epsilon}\int_0^T \int_{\mathbb{T}^d} m_{\epsilon} \mathbb{1}_{\{-\partial_t u_{\epsilon} -  \nu\Delta u_{\epsilon} < f(m_{\epsilon})\}}\big{)}_{\epsilon > 0}
\]
is also a bounded sequence and thus that :
\[
\int_0^T \int_{\mathbb{T}^d} (-\partial_t u -  \nu\Delta u - f(m))m = 0.
\]
This ends the proof of the fact that $(u,m)$ is a solution of $(\ref{MFGsystem})$.

\end{proof}

\section{Uniqueness of solutions of the MFG system}
We now turn to the question of the uniqueness of solutions of $(\ref{MFGsystem})$. As it is the case in MFG of continuous control \citep{lasry2007mean}, uniqueness does not hold in general. However it does under an assumption on the monotonicity of the costs of the MFG (i.e. the coupling) with respect to the density of players. In our model the density of players appears only in the running cost $f$ and thus only an assumption on $f$ is required for uniqueness to hold. We recall that $f$ is said to be strictly monotone if :
\[
\int_0^T \int_{\mathbb{T}^d} (f(m_1) - f(m_2))(m_1 - m_2) > 0 \text{ if } m_1 \ne m_2.
\]

\begin{Theorem}
Assume that $f$ is strictly monotone, then there exists at most one solution of $(\ref{MFGsystem})$.
\end{Theorem}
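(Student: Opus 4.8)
The plan is to run the Lasry--Lions uniqueness argument, adapted to the quasi-variational setting, exactly as announced in the text. Suppose $(u_1,m_1)$ and $(u_2,m_2)$ are two solutions of $(\ref{MFGsystem})$, and write $Pu := -\partial_t u - \nu \Delta u$. Since each $u_i$ solves the QVI, it satisfies $u_i \leq M u_i$, $u_i(T)=0$, and $u_i \in L^2((0,T),H^2(\mathbb{T}^d))\cap H^1((0,T),L^2(\mathbb{T}^d)) \subset H$; hence each $u_i$ is an admissible competitor $v$ in the variational inequality (the fourth line of $(\ref{MFGsystem})$) associated with the \emph{other} density. Taking $v = u_2$ in the inequality for $m_1$ and $v = u_1$ in the inequality for $m_2$ and adding, the boundary contributions $\int_{\mathbb{T}^d}(u_2-u_1)(0)m_0$ cancel, leaving
\[
\int_0^T \big(P(u_2-u_1),\, m_1 - m_2\big)_{H^{-1}\times H^1} \geq 0. \qquad (\star)
\]

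Next I would exploit the QVI structure together with the complementarity relation. From $\max\big(Pu_i - f(m_i),\, u_i - Mu_i\big)=0$ we get $Pu_i - f(m_i) \leq 0$ almost everywhere, while $m_i \geq 0$; hence for $i \neq j$ the cross terms satisfy $\int_0^T \big(Pu_j - f(m_j),\, m_i\big)_{H^{-1}\times H^1} \leq 0$, and the diagonal terms vanish by the last line of $(\ref{MFGsystem})$, namely $\int_0^T\int_{\mathbb{T}^d}(Pu_i - f(m_i))m_i = 0$. Writing $Pu_i = f(m_i) + (Pu_i - f(m_i))$ in the expansion of the left-hand side of $(\star)$ and discarding the two sign-definite cross remainders, I obtain
\[
\int_0^T \big(P(u_2-u_1),\, m_1 - m_2\big)_{H^{-1}\times H^1} \leq -\int_0^T\int_{\mathbb{T}^d}\big(f(m_1)-f(m_2)\big)(m_1-m_2).
\]
Combining this with $(\star)$ gives $\int_0^T\int_{\mathbb{T}^d}(f(m_1)-f(m_2))(m_1-m_2) \leq 0$. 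Strict monotonicity of $f$ then forces $m_1 = m_2$; consequently $f(m_1)=f(m_2)$, so $u_1$ and $u_2$ solve the same QVI, and uniqueness of the QVI solution (appendix) yields $u_1 = u_2$.

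The argument is essentially bookkeeping once the two ingredients, the variational inequality for $m$ and the diagonal complementarity relation, are combined. The points requiring care are the admissibility of each $u_i$ as a test function, which is precisely why the solution concept bundles $u_i \leq Mu_i$ with $u_i(T)=0$, and the well-definedness of the pairings $\big(Pu_i, m_j\big)_{H^{-1}\times H^1}$ and of the products $(Pu_j - f(m_j))m_i$; the latter follow from $Pu_i \in L^2((0,T)\times\mathbb{T}^d)$ (by the regularity of the QVI solution), $f(m_i)\in L^p$ with $p>d$ by the standing assumptions on $f$, and $m_i \in L^2((0,T),H^1(\mathbb{T}^d))$, exactly the integrabilities already used in the existence proof. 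The only genuine subtlety, as in the continuous-control case of \citep{lasry2007mean}, is that the integrations by parts used to cancel the initial data and to pass between $(Pu,m)$ and $-(\partial_t u, m)_{H^{-1}\times H^1} + \nu\int\nabla u\cdot\nabla m$ must be justified at this regularity, which the membership $u_i \in H$ and $m_i \in L^2((0,T),H^1(\mathbb{T}^d))$ guarantees.
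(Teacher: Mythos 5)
Your proof is correct and follows essentially the same route as the paper's: the crossed testing of each value function $u_i$ in the variational inequality for the other density (the Lasry--Lions device, giving $(\star)$), combined with the complementarity relation $\int_0^T\int_{\mathbb{T}^d}(Pu_i - f(m_i))m_i = 0$ and the sign conditions $Pu_j \leq f(m_j)$, $m_i \geq 0$ on the cross terms, yielding monotonicity of $f$ squeezed between two inequalities. The only difference is presentational bookkeeping (your decomposition $Pu_i = f(m_i) + (Pu_i - f(m_i))$ versus the paper's term-by-term estimates), and you additionally make explicit the final step $u_1 = u_2$ via uniqueness of the QVI solution, which the paper leaves implicit.
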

\begin{proof}
The proof of this statement is directly inspired from the original proof of uniqueness in MFG of continuous control \citep{lasry2007mean}. Let us take $(u_1, m_1)$ and $(u_2, m_2)$ two solutions of $(\ref{MFGsystem})$. We denote by $u$ and $m$ the differences $u_1 - u_2$ and $m_1 - m_2$. Let us observe that because of the optimality of $u_1$ in $D(m_1)$ and similarly for $u_2$ in $D(m_2)$ we obtain
\[
\begin{aligned}
\int_0^T \int_{\mathbb{T}^d} (-\partial_t u -  \nu\Delta u)m = \int_0^T& \int_{\mathbb{T}^d} (-\partial_t(u_1 - u_2) -  \nu\Delta (u_1 - u_2))m_1 \\
&+ \int_0^T \int_{\mathbb{T}^d} (-\partial_t (u_2 - u_1) -  \nu\Delta (u_2 - u_1))m_2; \\
 \leq 0&.
\end{aligned}
\]
On the other hand, because $m_2 \geq 0$, and 
\[
\int_0^T \int_{\mathbb{T}^d} (-\partial_t u_1 -  \nu\Delta u_1 - f(m_1))m_1 = 0;
\]
we deduce that
\[
\begin{aligned}
\int_0^T \int_{\mathbb{T}^d} (-\partial_t u_1 -  \nu\Delta u_1) m &= \int_0^T \int_{\mathbb{T}^d} f(m_1)m_1 + m_2(\partial_t u_1 +  \nu\Delta u_1);\\
& \geq \int_0^T \int_{\mathbb{T}^d} f(m_1)(m_1 - m_2).
\end{aligned}
\]
Where we have used the fact that $-\partial_t u_1 -  \nu\Delta u_1 \leq f(m_1)$. Obviously we have the analogous relation for $u_2$. Putting the pieces together we finally obtain 

\[
\int_0^T \int_{\mathbb{T}^d} (f(m_1) - f(m_2))(m_1 - m_2) \leq \int_0^T \int_{\mathbb{T}^d} (-\partial_t u - \nu \Delta u)m \leq 0.
\]
Using the strict monotonicity of $f$, we have just proven that $m_1 = m_2$ and thus that there exists at most one solution of $(\ref{MFGsystem})$.

\end{proof}

\section{The stationary setting}
In this section, we present a stationary setting for a MFG of impulse control. We denote by $k$ the cost of jumps and we assume that it satisfies (\ref{hypk}). We denote by $f$ the running cost for the players. We assume that 
\begin{itemize}
\item $f$ is continuous from $H^1(\mathbb{T}^d)$ endowed with its the weak topology to $H^{-1}(\mathbb{T}^d)$.
\item $f$ is uniformly bounded by below by a constant $-C$ (where $C >0$) on the positive elements of $H^1(\mathbb{T}^d)$.
\item $f$ maps $H^1(\mathbb{T}^d)$ into a bounded subset of $L^d(\mathbb{T}^d)$.
\end{itemize}
We denote by $\delta > 0$ the death rate of the players and by $\lambda > 0$ their intertemporal preference rate. We denote by $\rho \in L^2(\mathbb{T}^d)$, $\rho \geq 0$ the entry rate of the players. The jump operator $M$ is defined by :
\[
Mu(x) = \inf_{\xi \in K} k(x,\xi) + u(x + \xi).
\]
We are interested in the following MFG system :
\begin{equation}\label{statMFG}
\begin{cases}
\max(-\nu \Delta u + \lambda u - f(m), u - Mu) = 0 \text{ in } \mathbb{T}^d;\\
\forall v \in H^1(\mathbb{T}^d), v \leq Mv :\\
\nu \int_{\mathbb{T}^d}\nabla m \cdot \nabla (v-u) + \delta \int_{\mathbb{T}^d}m(v-u) \geq \int_{\mathbb{T}^d}\rho(v-u);\\
\int_{\mathbb{T}^d}(-\nu \Delta u + \lambda u - f(m))m = 0.
\end{cases}
\end{equation}
The following result holds true :
\begin{Theorem}\label{existuniquestat}
There exists a solution $(u,m) \in H^2(\mathbb{T}^d) \times H^1(\mathbb{T}^d)$ of (\ref{statMFG}). It is unique under the assumption that $f$ is strictly monotone.
\end{Theorem}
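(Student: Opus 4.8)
The plan is to transpose to the elliptic setting the two-step program already carried out for the evolutionary system $(\ref{MFGsystem})$: a penalization combined with a fixed-point theorem for existence, and the Lasry--Lions monotonicity computation for uniqueness. All the stationary building blocks are in place, namely the well-posedness of the penalized stationary Fokker--Planck equation $(\ref{penalstat})$, the a priori estimate of Lemma $\ref{eststat}$, and the stationary QVI theory (proposition $\ref{weakqvi2}$ in the appendix).

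For existence I would first introduce the stationary penalized system coupling the QVI $\max(-\nu\Delta u + \lambda u - f(m), u - Mu) = 0$ with the penalized stationary Fokker--Planck equation
\[
-\nu\Delta m + \delta m + \frac{\alpha}{\epsilon}\mathbb{1}_{\{u=Mu\}}m\sum_{\xi\in K}V(\xi,\cdot) - \frac{1}{\epsilon}\sum_{\xi\in K}V(\xi,\cdot-\xi)\alpha(\cdot-\xi)\mathbb{1}_{\{u=Mu\}}(\cdot-\xi)m(\cdot-\xi) = \rho,
\]
together with the same constraints on $\alpha$ and $V$ as in $(\ref{penalizedMFG})$. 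Existence of a solution $(u_\epsilon, m_\epsilon, \alpha_\epsilon, V_\epsilon)$ would follow from Kakutani's fixed point theorem applied to $\mathcal{F} = \mathcal{F}_2\circ\mathcal{F}_1$, where $\mathcal{F}_1$ solves the stationary QVI for a frozen $m$ and $\mathcal{F}_2$ returns the admissible densities. The verification of upper semicontinuity is the exact analogue of the computation in the proof of the penalized existence theorem, but simpler here since there is no time regularity to track and the penalized equation is uniquely solvable by the propositions of the previous section.

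I would then pass to the limit $\epsilon\to 0$. The uniform bound comes from Lemma $\ref{eststat}$: writing $-D(m_\epsilon)$ as the aggregate penalized jump cost and using that $u_\epsilon$ is optimal in $D(m_\epsilon)$ with $-\nu\Delta u_\epsilon + \lambda u_\epsilon \le f(m_\epsilon)$, one bounds $\|m_\epsilon\|_{H^1}^2$ by $\int_{\mathbb{T}^d} f(m_\epsilon)m_\epsilon + C\|\rho\|_{L^2}\|m_\epsilon\|_{H^1}$, which is controlled by the assumptions on $f$ (bounded below, valued in a bounded subset of $L^d$) and the Sobolev embedding of $H^1(\mathbb{T}^d)$. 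Extracting a weakly convergent subsequence $m_\epsilon\rightharpoonup m$ in $H^1$, the stationary QVI continuity result gives $u_\epsilon\to u$ solving the first line of $(\ref{statMFG})$; passing to the limit in the variational inequality yields $D(m)>-\infty$ and the second line, while the complementarity relation is recovered from the boundedness of $\frac{k_0}{\epsilon}\int_{\mathbb{T}^d} m_\epsilon \mathbb{1}_{\{-\nu\Delta u_\epsilon + \lambda u_\epsilon < f(m_\epsilon)\}}$. The delicate point of the existence half is precisely this last recovery together with the continuity of the QVI solution map under weak convergence of the coupling.

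For uniqueness I would run the Lasry--Lions argument on two solutions $(u_1,m_1)$, $(u_2,m_2)$, setting $u = u_1 - u_2$, $m = m_1 - m_2$. Testing the variational inequality for $u_1$ against $u_2$ and that for $u_2$ against $u_1$, then adding, gives $\nu\int_{\mathbb{T}^d}\nabla m\cdot\nabla u + \delta\int_{\mathbb{T}^d} mu \le 0$, that is $\int_{\mathbb{T}^d} m(-\nu\Delta u + \delta u) \le 0$. On the other hand, the identity $\int_{\mathbb{T}^d}(-\nu\Delta u_i + \lambda u_i - f(m_i))m_i = 0$ combined with $-\nu\Delta u_i + \lambda u_i \le f(m_i)$ and $m_j\ge 0$ yields $\int_{\mathbb{T}^d}(f(m_1)-f(m_2))(m_1-m_2) \le \int_{\mathbb{T}^d}(-\nu\Delta u + \lambda u)m$. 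The main obstacle is reconciling these two inequalities: the value-function operator carries the preference rate $\lambda$ whereas the population operator carries the death rate $\delta$, so their difference leaves a residual $(\lambda-\delta)\int_{\mathbb{T}^d} um$, and the clean cancellation of the evolutionary proof occurs only when the two zeroth-order coefficients coincide. Controlling this residual (or specializing to $\lambda=\delta$) is the step on which uniqueness really hinges; once it is shown to be nonpositive, strict monotonicity of $f$ forces $m_1 = m_2$, and then $u_1 = u_2$ by uniqueness for the QVI.
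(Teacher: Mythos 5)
Your proposal follows the paper's own route: the paper gives no detailed proof of this theorem, saying only that the argument ``follows step by step the one of the time dependent case,'' which is exactly the penalization-plus-Kakutani existence scheme and the Lasry--Lions uniqueness computation you describe, with Lemma \ref{eststat} and Proposition \ref{continuityqvi2} playing precisely the roles you assign them. Your existence half is sound and matches the intended argument; writing the loss term with the factor $\sum_{\xi\in K}V(\xi,\cdot)$ rather than the paper's bare $\frac{\alpha}{\epsilon}\mathbb{1}_{\{u=Mu\}}m$ is an immaterial variant of the penalization.

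The point you raise about the zeroth-order coefficients is not a defect of your write-up but a genuine gap in the theorem as stated, and you have located it exactly. In the time-dependent computation the cancellation is exact because both halves of the argument pair the \emph{same} operator $-\partial_t-\nu\Delta$ against $m_1-m_2$: the optimality of $u_i$ in $D(m_i)$ and the complementarity relation involve identical expressions, the initial-time terms cancel, and no residual survives. In the stationary system (\ref{statMFG}) the Fokker--Planck inequality carries $-\nu\Delta+\delta$ while the QVI and the complementarity relation carry $-\nu\Delta+\lambda$, so combining the two halves yields only
\[
\int_{\mathbb{T}^d}\bigl(f(m_1)-f(m_2)\bigr)(m_1-m_2)\;\le\;(\lambda-\delta)\int_{\mathbb{T}^d}(u_1-u_2)(m_1-m_2),
\]
and the right-hand side has no sign in general: $u_1-u_2$ and $m_1-m_2$ are differences carrying no pointwise sign information, and nothing in the system couples them favorably. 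Monotonicity of $f$ therefore closes the argument only when $\lambda=\delta$ (or when the residual can otherwise be controlled); it is no accident that the paper's subsequent optimal-control section explicitly restricts to $\lambda=\delta$. The honest conclusion is that your proposal, like the paper's sketch, proves existence for all $\lambda,\delta>0$ but uniqueness only for $\lambda=\delta$; the uniqueness claim for $\lambda\neq\delta$ is not justified by this method, and you were right to single that step out as the one on which everything hinges.
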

We do not detail the proof of this result as its argument follows step by step the one of the time dependent case, namely by passing to the limit in the following penalized system :
\[
\begin{cases}
\max(-\nu \Delta u + \lambda u - f(m), u - Mu) = 0 \text{ in } \mathbb{T}^d;\\
-\nu \Delta m + \delta m + \frac{\alpha}{\epsilon} \mathbb{1}_{\{u = Mu\}} m - \sum_{\xi \in K} V(x- \xi,\xi) \frac{\alpha( x - \xi)}{\epsilon} \mathbb{1}_{\{u = Mu\}}(x - \xi) m( x- \xi)= \rho \text{ in } \mathbb{T}^d ;\\
(u -Mu) + (- \nu \Delta u + \lambda u - f(m)) \ne 0 \Rightarrow \alpha = 1 ;\\
\forall \xi \in K, u(x) \ne k(x,\xi) + u(x + \xi) \Rightarrow V(x,\xi) = 0 ; \\
\forall \xi \in K, \forall (x) \in (0,T) \times \mathbb{T}^d, 0 \leq V(x,\xi) \leq 1;\\
 0 \leq \alpha \leq 1;
\end{cases}
\]
using the estimate (\ref{eststat}).
\section{The optimal control interpretation}
In this section, we present an optimal control interpretation of a MFG of impulse control. We do not make use of this interpretation to prove the existence of a solution of the MFG. We just show that a certain optimization problem has a solution, and that the solution of the MFG, for which we have proven the existence in the previous part, is the solution of this optimization problem. To make this section simpler we work only in the stationary setting. The case of the time dependent problem is mentioned at the end of this section. Let us note that the optimal control interpretation of MFG has been introduced in \citep{lasry2007mean} and that it can be used to show the existence of solutions for certain MFG system, see for instance \citep{cardaliaguet2015second}.\\
\\
We denote by $\mathcal{F}$ a strictly convex function from $H^1(\mathbb{T}^d)$ to $\mathbb{R}$, bounded by below. We assume that there exists $f$, satisfying the requirements of the previous section, such that for any $m, m' \in H^1(\mathbb{T}^d)$ :
$$\lim_{\theta \to 0} \frac{\mathcal{F}((1-\theta)m + \theta m') - \mathcal{F}(m)}{\theta} = \int_{\mathbb{T}^d} f(m)(m' - m).$$
 We define the following application from $H^1(\mathbb{T}^d)$ to $\mathbb{R}_-\cup \{- \infty\}$ :
\[
D(m) = \inf \{ \nu \int_{\mathbb{T}^d}\nabla m \cdot \nabla v + \delta \int_{\mathbb{T}^d}m v - \int_{\mathbb{T}^d}\rho v | v \in H^1(\mathbb{T}^d), v \leq Mv\}.
\]
The optimal control interpretation of the MFG of impulse control leads to the following optimization problem :
\begin{equation}\label{optcontr}
\inf_{m \in \mathcal{H}} \mathcal{F}(m) - D(m)
\end{equation}
where $\mathcal{H} =\{ m \in  H^1(\mathbb{T}^d), m \geq 0\}$. We now establish the following result :
\begin{Prop}\label{existmini}
Under the previous assumptions on $\mathcal{F}$, the problem (\ref{optcontr}) admits a unique minimizer $m^* \in \mathcal{H}$.
\end{Prop}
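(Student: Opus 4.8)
The plan is to apply the direct method of the calculus of variations to the functional $J(m) := \mathcal{F}(m) - D(m)$ on $\mathcal{H} = \{m \in H^1(\mathbb{T}^d),\ m \ge 0\}$, after checking that $J$ is proper, strictly convex, weakly lower semicontinuous and coercive; uniqueness will then follow from strict convexity.

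First I would record the structural properties of $D$. For each fixed admissible test function $v$ (that is, $v \in H^1(\mathbb{T}^d)$ with $v \le Mv$), the map
\[
m \mapsto \nu \int_{\mathbb{T}^d} \nabla m \cdot \nabla v + \delta \int_{\mathbb{T}^d} mv - \int_{\mathbb{T}^d}\rho v
\]
is affine and weakly continuous on $H^1(\mathbb{T}^d)$, since $m \mapsto \int_{\mathbb{T}^d} \nabla m \cdot \nabla v$ and $m \mapsto \int_{\mathbb{T}^d} mv$ are bounded linear functionals. Hence $D$, being an infimum over $v$ of affine functions of $m$, is concave, so $-D$ is convex; and $-D$, being a supremum of weakly continuous functions, is weakly lower semicontinuous. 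Since $\mathcal{F}$ is strictly convex and, being G\^ateaux differentiable with derivative $f(m)$, continuous, it is weakly lower semicontinuous as well. Therefore $J$ is strictly convex and weakly lower semicontinuous. Properness follows from the existence result for the stationary Fokker–Planck equation established above, which provides a nonnegative $m$ with $D(m) > -\infty$, so that $J \not\equiv +\infty$.

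Second, and this is the heart of the argument, I would prove coercivity. The key observation is that constant test functions $v \equiv a$ are admissible (indeed $a \le Ma = k^* + a$ because $k^* \ge k_0 > 0$) and give the value $a(\delta \int_{\mathbb{T}^d} m - \int_{\mathbb{T}^d}\rho)$; letting $a \to \pm \infty$ forces $D(m) = -\infty$ unless $\delta \int_{\mathbb{T}^d} m = \int_{\mathbb{T}^d} \rho$. Thus the effective domain $\{m \in \mathcal{H}:\ D(m) > -\infty\}$ is contained in the mass-constrained set on which Lemma \ref{eststat} applies (note that the obstacle-problem solution used to prove that lemma satisfies the constraint $v \le Mv$ everywhere, hence is admissible in the present definition of $D$, so its estimate transfers). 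On that set Lemma \ref{eststat} yields $\|m\|_{H^1}^2 \le C(-D(m) + \|m\|_{H^1}\|\rho\|_{L^2})$, and combining with the lower bound $\mathcal{F} \ge -C_0$ gives
\[
J(m) = \mathcal{F}(m) - D(m) \ge -C_0 + \tfrac{1}{C}\|m\|_{H^1}^2 - \|m\|_{H^1}\|\rho\|_{L^2},
\]
which tends to $+\infty$ as $\|m\|_{H^1} \to \infty$, while $J = +\infty$ off the effective domain. Hence the sublevel sets of $J$ are bounded in $H^1(\mathbb{T}^d)$.

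Finally I would run the direct method: take a minimizing sequence $(m_n)$, use coercivity to bound it in $H^1(\mathbb{T}^d)$, and extract a weakly convergent subsequence $m_n \rightharpoonup m^*$. Since $\mathcal{H}$ is convex and strongly closed it is weakly closed, so $m^* \in \mathcal{H}$, and weak lower semicontinuity gives $J(m^*) \le \liminf_n J(m_n) = \inf_{\mathcal{H}} J$, so that $m^*$ is a minimizer. Uniqueness then follows from the strict convexity of $J$ on the convex set $\mathcal{H}$. I expect the main obstacle to be the coercivity step: because $\mathcal{F}$ is only assumed bounded below, all the growth must come from $-D$, which is precisely why combining the constant-test-function observation with the a priori estimate of Lemma \ref{eststat} is essential.
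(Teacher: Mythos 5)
Your proposal is correct and follows essentially the same route as the paper's proof: the direct method in the weak $H^1(\mathbb{T}^d)$ topology, with concavity and weak upper semicontinuity of $D$ (in the paper, via an approximating sequence $(v_p)$; in your version, via writing $-D$ as a supremum of weakly continuous affine functionals), weak lower semicontinuity of the convex functional $\mathcal{F}$, the a priori estimate of Lemma \ref{eststat} to bound minimizing sequences in $H^1(\mathbb{T}^d)$, and strict convexity of $\mathcal{F}$ together with concavity of $D$ for uniqueness. Your explicit remark that constant test functions force the mass constraint $\delta \int_{\mathbb{T}^d} m = \int_{\mathbb{T}^d}\rho$ on the effective domain of $D$ (which is what makes Lemma \ref{eststat} applicable) is a useful refinement of a point the paper leaves implicit; the paper instead bounds $-D(m_n)$ along the minimizing sequence using the witness $\mu$ solving $-\nu\Delta\mu + \delta\mu = \rho$, for which $D(\mu)=0$, but the applicability of Lemma \ref{eststat} rests on the same observation in both arguments.
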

\begin{proof}
We first prove that the function $D(\cdot)$ is concave and upper semi continuous on $\mathcal{H}$. Assume that the sequence $(m_n)_{n \geq 0}$ converges weakly toward $m$ in $\mathcal{H}$. Let us remark that because $\mathcal{F}$ is bounded by below : $D(m) > - \infty$. Moreover for any $v \in H^1(\mathbb{T}^d), v \leq Mv$ we obtain
\[
\nu \int_{\mathbb{T}^d}\nabla (m_n -m)\cdot \nabla v + \delta \int_{\mathbb{T}^d}(m_n-m) v  \rightarrow_{n \to \infty} 0.
\]
Thus taking a sequence $(v_p)_{p\geq 0}$ in $H^1(\mathbb{T}^d)$ such that
\[
\nu \int_{\mathbb{T}^d}\nabla m \cdot \nabla v_p + \delta \int_{\mathbb{T}^d}m v_p - \int_{\mathbb{T}^d}\rho v_p \leq D(m) + \frac{1}{p+1};
\]
we deduce that
\[
\begin{aligned}
D(m_n) \leq \nu \int_{\mathbb{T}^d}\nabla m_n &\cdot \nabla v_p + \delta \int_{\mathbb{T}^d}m_n v_p - \int_{\mathbb{T}^d}\rho v_p;\\
\to_{n \to \infty} \nu \int_{\mathbb{T}^d}&\nabla m \cdot \nabla v_p + \delta \int_{\mathbb{T}^d}m v_p - \int_{\mathbb{T}^d}\rho v_p;\\
 \leq D&(m) + \frac{1}{p}.
\end{aligned}
\]
Thus $D(\cdot)$ is upper semi continuous, it is obviously concave. Now let $(m_n)_{n \geq 0}$ be a minimizing sequence of (\ref{optcontr}). If we denote by $µ$ the solution of 
\[
- \nu \Delta µ + \delta µ = \rho;
\]
then we observe that $µ \in \mathcal{H}$ and $D(µ) = 0$. Thus we deduce that
\[
0 \leq - D(m_n) \leq \mathcal{F}(µ) - \inf \mathcal{F}.
\]
Recalling lemma \ref{eststat}, $(m_n)_{n\geq 0}$ is thus a bounded sequence of $H^1(\mathbb{T}^d)$. Thus it converges weakly to $m^* \in H^1(\mathbb{T}^d)$. Because $\mathcal{F}$ is weakly sequentially lower semi continuous (it is continuous and convex) and $D(\cdot)$ is weakly sequentially upper semi continuous, we deduce that $m^*$ is a minimizer of (\ref{optcontr}). This minimizer is unique because $\mathcal{F}$ is strictly convex and $D$ is concave.
\end{proof}
Now let us remark that the solution $(u,m)$ of the MFG system (\ref{statMFG}) (with $\lambda = \delta$) given by theorem \ref{existuniquestat} satisfies the following system of variational inequalities:
\begin{equation}\label{systvi}
\begin{cases}
\forall µ \in H^1(\mathbb{T}^d), µ \geq 0 :\\
\int_{\mathbb{T}^d} (-\nu \Delta u + \delta u - f(m))(µ - m) \leq 0;\\
\forall v \in H^1(\mathbb{T}^d), v \leq Mv :\\
\nu \int_{\mathbb{T}^d}\nabla m \cdot \nabla (v-u) + \delta \int_{\mathbb{T}^d}m(v-u) \geq \int_{\mathbb{T}^d}\rho(v-u).
\end{cases}
\end{equation}
This system of variational inequalities is the characterization of a saddle point of (\ref{optcontr}). From this observation we deduce the following :
\begin{Theorem}
The unique minimizer of (\ref{optcontr}) is the density of players $m$ of the MFG of impulse control. This is if $(u,m)$ is the solution of (\ref{statMFG}) given by theorem \ref{existuniquestat}, then $m$ is the unique minimizer of (\ref{optcontr}).
\end{Theorem}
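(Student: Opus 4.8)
The final theorem asserts that the density component $m$ of the MFG solution $(u,m)$ from Theorem \ref{existuniquestat} coincides with the unique minimizer $m^*$ of the convex optimization problem (\ref{optcontr}). Let me sketch how I would prove this.

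The key structural fact I would exploit is the one the paper has just handed me: the MFG solution satisfies the system of variational inequalities (\ref{systvi}), and the text states that this system is exactly the characterization of a saddle point of (\ref{optcontr}). So the plan is to make that saddle-point connection rigorous and then invoke the uniqueness already established in Proposition \ref{existmini}.

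**Proposed proof.**

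\begin{proof}
The plan is to show that $(u,m)$ solving (\ref{statMFG}) furnishes a saddle point of an appropriate Lagrangian associated with (\ref{optcontr}), and then to read off that $m$ is the primal minimizer.

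First I would set up the convex-duality structure. Recall that $D(\cdot)$ is, by Proposition \ref{existmini}, concave and upper semicontinuous, defined as an infimum over the admissible set $\{v \in H^1(\mathbb{T}^d), v \leq Mv\}$. The functional to minimize is $\mathcal{F}(m) - D(m)$, which is a sum of a strictly convex term and a convex term (namely $-D$), over the convex cone $\mathcal{H} = \{m \in H^1, m \geq 0\}$. I would introduce the Lagrangian
\[
L(m,v) = \mathcal{F}(m) - \Big( \nu \int_{\mathbb{T}^d}\nabla m \cdot \nabla v + \delta \int_{\mathbb{T}^d}m v - \int_{\mathbb{T}^d}\rho v\Big),
\]
for $m \in \mathcal{H}$ and $v \in H^1(\mathbb{T}^d)$ with $v \leq Mv$, so that $\mathcal{F}(m) - D(m) = \sup_v L(m,v)$ and the problem (\ref{optcontr}) reads $\inf_m \sup_v L(m,v)$.

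Next I would verify that the pair $(m,u)$ coming from the MFG solution is a saddle point of $L$, i.e. that
\[
L(m, v) \leq L(m, u) \leq L(\tilde m, u) \quad \text{for all admissible } v, \tilde m.
\]
The first inequality, optimality of $u$ in $v \mapsto L(m,v)$, is exactly the second line of (\ref{systvi}): since $L(m,\cdot)$ is affine, its maximality over $\{v \leq Mv\}$ at $v=u$ is the variational inequality $\nu \int \nabla m \cdot \nabla(v-u) + \delta \int m(v-u) \geq \int \rho(v-u)$, which is precisely the Fokker--Planck part of the MFG system. The second inequality, optimality of $m$ in $\tilde m \mapsto L(\tilde m, u)$, uses the first line of (\ref{systvi}) together with the differentiability hypothesis relating $\mathcal{F}$ and $f$: the directional derivative of $\tilde m \mapsto \mathcal{F}(\tilde m) - \nu\int \nabla \tilde m \cdot \nabla u - \delta \int \tilde m u$ at $\tilde m = m$ in the direction $\eta - m$ (with $\eta \geq 0$) equals $\int_{\mathbb{T}^d}(f(m) + \nu \Delta u - \delta u)(\eta - m)$, and by the Hamilton--Jacobi variational inequality $\int(-\nu\Delta u + \delta u - f(m))(\eta - m) \leq 0$ this derivative is nonnegative; convexity of this map in $\tilde m$ then upgrades the nonnegative directional derivative into the claimed global inequality. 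Hence $(m,u)$ is a saddle point.

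Finally, a saddle point of $L$ yields a minimizer of the primal problem by the standard minimax argument: from $L(m,u) \leq L(\tilde m, u) \leq \sup_v L(\tilde m, v)$ for all $\tilde m$, and $L(m,u) = \sup_v L(m,v)$ (by the first saddle inequality together with $\sup_v L(m,v) = \mathcal{F}(m) - D(m)$), we conclude $\mathcal{F}(m) - D(m) \leq \mathcal{F}(\tilde m) - D(\tilde m)$ for every $\tilde m \in \mathcal{H}$, so $m$ solves (\ref{optcontr}). By the uniqueness in Proposition \ref{existmini} this forces $m = m^*$.
\end{proof}

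**Where the difficulty lies.** The routine part is the minimax bookkeeping in the last paragraph. The genuine content is the second saddle inequality: I must convert the pointwise obstacle relation $-\nu\Delta u + \delta u - f(m) \leq 0$ together with the complementarity integral $\int(-\nu\Delta u + \delta u - f(m))m = 0$ into convex optimality of $m$, and this is exactly where the hypothesis that $\mathcal{F}$ is $C^1$ with derivative $f$ (and strictly convex) is indispensable. Handling the regularity needed to legitimately integrate $\nu\int \nabla m \cdot \nabla u$ by parts against $\Delta u$ — i.e. checking that $u \in H^2$ and $m \in H^1$ make these pairings well defined — is the main technical obstacle to watch, though the paper's regularity assertions in Theorem \ref{existuniquestat} should supply exactly what is required.
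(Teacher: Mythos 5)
Your proposal is correct and takes essentially the same route as the paper: the paper's proof likewise uses the second variational inequality of (\ref{systvi}) to identify $u$ as the point where the infimum defining $D(m)$ is attained, bounds $D\left((1-\theta)m+\theta m'\right)$ from above by evaluating at $v=u$, computes exactly your directional derivative (via the hypothesis linking $\mathcal{F}$ and $f$, integration by parts, and the first variational inequality of (\ref{systvi})), and concludes by strict convexity together with Proposition \ref{existmini}. Your Lagrangian/saddle-point packaging is only a reorganization of these identical ingredients --- indeed the paper itself remarks, just before the theorem, that (\ref{systvi}) is the characterization of a saddle point of (\ref{optcontr}).
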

\begin{proof}
We denote by $(u,m)$ the unique solution of (\ref{statMFG}). For any $m' \in \mathcal{H}$, $0 <\theta < 1$, using the second variational inequality of (\ref{systvi}) we deduce that :
\[
\begin{aligned}
\mathcal{F}\left((1-\theta)m + \theta m'\right) - &D\left((1- \theta) m + \theta m'\right) - \mathcal{F}(m) + D(m)\\
 =\mathcal{F}\left((1-\theta)m + \theta m'\right) &- D\left((1- \theta) m + \theta m'\right) - \mathcal{F}(m)\\
& + \nu \int_{\mathbb{T}^d}\nabla m \cdot \nabla u + \delta \int_{\mathbb{T}^d}m u - \int_{\mathbb{T}^d}\rho u.
\end{aligned}
\]
Using the definition of $D(\cdot)$, we now obtain that :
\[
\begin{aligned}
\mathcal{F}\left((1-\theta)m + \theta m'\right) - D&\left((1- \theta) m + \theta m'\right) - \mathcal{F}(m) + D(m)\\
\geq \mathcal{F}\left((1-\theta)m + \theta m'\right) &- \mathcal{F}(m) - \nu \int_{\mathbb{T}^d}\nabla \left((1- \theta)m + \theta m' \right) \cdot \nabla u \\
- \delta \int_{\mathbb{T}^d}((1- \theta)m + &\theta m') u + \int_{\mathbb{T}^d}\rho u
+ \nu \int_{\mathbb{T}^d}\nabla m \cdot \nabla u + \delta \int_{\mathbb{T}^d}m u - \int_{\mathbb{T}^d}\rho u;\\
=  \mathcal{F}\left((1-\theta)m + \theta m'\right)& - \mathcal{F}(m) - \theta\int_{\mathbb{T}^d}(- \nu \Delta u + \delta u)(m' - m).
\end{aligned}
\]
Hence, using the first variational inequality of (\ref{systvi}), we obtain that :
\[
\lim_{\theta \to 0^+} \frac{\mathcal{F}\left((1-\theta)m + \theta m'\right) - D\left((1- \theta) m + \theta m'\right) - \mathcal{F}(m) + D(m)}{\theta} \geq 0.
\]
We deduce from the previous line that $m$ is a local minimum of the functional $\mathcal{F}(\cdot) - D(\cdot)$, because this functional is strictly convex, $m$ is the unique minimizer of (\ref{optcontr}).
\end{proof}

\subsection{The time dependent case}
We only indicate the optimal control interpretation in the time dependent case. We do not give any proofs are precise statements as they are the exact analogous of the one we just establish in the stationary setting. In a time dependent setting with time horizon $T> 0$ and initial condition $m_0 \in L^2(\mathbb{T}^d), m_0 \geq 0$, let us assume that there exists $\mathcal{F}$ such that for any $m, m' \in L^2((0,T), H^1(\mathbb{T}^d))$:
$$\lim_{\theta \to 0} \frac{\mathcal{F}((1-\theta)m + \theta m') - \mathcal{F}(m)}{\theta} = \int_0^T\int_{\mathbb{T}^d} f(m)(m' - m).$$
Then the optimal control problem associated with (\ref{MFGsystem}) is :
\[
\inf_{m \in \mathcal{H}} \mathcal{F}(m) - D(m).
\]
where $\mathcal{H} := \{ m \in L^2((0,T), H^1(\mathbb{T}^d)), m \geq 0\}$ and $D(m)$ is defined by (\ref{defDMFG}).

\section*{Acknowledgments} 
I would like to thank Pierre-Louis Lions (Coll\`ege de France) for his helpful advices on the general structure of the paper.\\
\\
This work is supported by a grant from the Fondation CFM pour la recherche.

\bibliographystyle{plainnat}
\bibliography{bib1}

\begin{thebibliography}{30}
\providecommand{\natexlab}[1]{#1}
\providecommand{\url}[1]{\texttt{#1}}
\expandafter\ifx\csname urlstyle\endcsname\relax
  \providecommand{\doi}[1]{doi: #1}\else
  \providecommand{\doi}{doi: \begingroup \urlstyle{rm}\Url}\fi

\bibitem[Achdou and Capuzzo-Dolcetta(2010)]{achdou2010mean}
Yves Achdou and Italo Capuzzo-Dolcetta.
\newblock Mean field games: Numerical methods.
\newblock \emph{SIAM Journal on Numerical Analysis}, 48\penalty0 (3):\penalty0
  1136--1162, 2010.

\bibitem[Achdou et~al.(2016)Achdou, Giraud, Lasry, and Lions]{achdou2016long}
Yves Achdou, Pierre-Noel Giraud, Jean-Michel Lasry, and Pierre-Louis Lions.
\newblock A long-term mathematical model for mining industries.
\newblock \emph{Applied Mathematics \& Optimization}, 74\penalty0 (3):\penalty0
  579--618, 2016.

\bibitem[Bensoussan and Lions(2011)]{bensoussan2011applications}
Alain Bensoussan and J-L Lions.
\newblock \emph{Applications of variational inequalities in stochastic
  control}, volume~12.
\newblock Elsevier, 2011.

\bibitem[Bensoussan and Lions(1984)]{bensoussan1984impulse}
Alain Bensoussan and Jacques~Louis Lions.
\newblock \emph{Impulse control and quasi-variational inequalities}.
\newblock Gaunthier-Villars, 1984.

\bibitem[Bertucci(2017)]{bertucci2017optimal}
Charles Bertucci.
\newblock Optimal stopping in mean field games, an obstacle problem approach.
\newblock \emph{Journal de Math{\'e}matiques Pures et Appliqu{\'e}es}, 2017.

\bibitem[Briceno-Arias et~al.(2018)Briceno-Arias, Kalise, and
  Silva]{bricen?o2018proximal}
LM~Briceno-Arias, D~Kalise, and FJ~Silva.
\newblock Proximal methods for stationary mean field games with local
  couplings.
\newblock \emph{SIAM Journal on Control and Optimization}, 56\penalty0
  (2):\penalty0 801--836, 2018.

\bibitem[Calvo et~al.(2017)Calvo, Novaga, and Orlandi]{calvo2017parabolic}
Juan Calvo, Matteo Novaga, and Giandomenico Orlandi.
\newblock Parabolic equations in time-dependent domains.
\newblock \emph{Journal of Evolution Equations}, 17\penalty0 (2):\penalty0
  781--804, 2017.

\bibitem[Cardaliaguet(2010)]{cardaliaguet2010notes}
Pierre Cardaliaguet.
\newblock Notes on mean field games.
\newblock Technical report, Technical report, 2010.

\bibitem[Cardaliaguet and Hadikhanloo(2017)]{cardaliaguet2017learning}
Pierre Cardaliaguet and Saeed Hadikhanloo.
\newblock Learning in mean field games: The fictitious play.
\newblock \emph{ESAIM: Control, Optimisation and Calculus of Variations},
  23\penalty0 (2):\penalty0 569--591, 2017.

\bibitem[Cardaliaguet and Porretta(2017)]{cardaliaguet2017long}
Pierre Cardaliaguet and Alessio Porretta.
\newblock Long time behavior of the master equation in mean-field game theory.
\newblock \emph{arXiv preprint arXiv:1709.04215}, 2017.

\bibitem[Cardaliaguet et~al.(2015{\natexlab{a}})Cardaliaguet, Delarue, Lasry,
  and Lions]{cardaliaguet2015master}
Pierre Cardaliaguet, Fran{\c{c}}ois Delarue, Jean-Michel Lasry, and
  Pierre-Louis Lions.
\newblock The master equation and the convergence problem in mean field games.
\newblock \emph{arXiv preprint arXiv:1509.02505}, 2015{\natexlab{a}}.

\bibitem[Cardaliaguet et~al.(2015{\natexlab{b}})Cardaliaguet, Graber, Porretta,
  and Tonon]{cardaliaguet2015second}
Pierre Cardaliaguet, P~Jameson Graber, Alessio Porretta, and Daniela Tonon.
\newblock Second order mean field games with degenerate diffusion and local
  coupling.
\newblock \emph{Nonlinear Differential Equations and Applications NoDEA},
  22\penalty0 (5):\penalty0 1287--1317, 2015{\natexlab{b}}.

\bibitem[Carmona and Delarue(2013)]{carmona2013probabilistic}
Ren{\'e} Carmona and Fran{\c{c}}ois Delarue.
\newblock Probabilistic analysis of mean-field games.
\newblock \emph{SIAM Journal on Control and Optimization}, 51\penalty0
  (4):\penalty0 2705--2734, 2013.

\bibitem[Carmona and Delarue(2017)]{carmona2017probabilistic}
Rene Carmona and Fran{\c{c}}ois Delarue.
\newblock \emph{Probabilistic Theory of Mean Field Games with Applications
  I-II}.
\newblock Springer, 2017.

\bibitem[Carmona et~al.(2017)Carmona, Delarue, and Lacker]{carmona2017mean}
Rene Carmona, Fran{\c{c}}ois Delarue, and Daniel Lacker.
\newblock Mean field games of timing and models for bank runs.
\newblock \emph{Applied Mathematics \& Optimization}, 76\penalty0 (1):\penalty0
  217--260, 2017.

\bibitem[Fu and Horst(2017)]{fu2017mean}
Guanxing Fu and Ulrich Horst.
\newblock Mean field games with singular controls.
\newblock \emph{SIAM Journal on Control and Optimization}, 55\penalty0
  (6):\penalty0 3833--3868, 2017.

\bibitem[Gianazza and Savar{\'e}(1996)]{gianazza1996abstract}
Ugo Gianazza and Giuseppe Savar{\'e}.
\newblock Abstract evolution equations on variable domains: an approach by
  minimizing movements.
\newblock \emph{Annali della Scuola Normale Superiore di Pisa-Classe di
  Scienze}, 23\penalty0 (1):\penalty0 149--178, 1996.

\bibitem[Gomes and Patrizi(2015)]{gomes2014obstacle}
Diogo Gomes and Stefania Patrizi.
\newblock Obstacle mean-field game problem.
\newblock \emph{Interfaces and Free Boundaries}, 17\penalty0 (1):\penalty0
  55--68, 2015.

\bibitem[Gomes and Patrizi(2016)]{gomes2016weakly}
Diogo~A Gomes and Stefania Patrizi.
\newblock Weakly coupled mean-field game systems.
\newblock \emph{Nonlinear Analysis}, 144:\penalty0 110--138, 2016.

\bibitem[Gu{\'e}ant(2009)]{gueant2009reference}
Olivier Gu{\'e}ant.
\newblock A reference case for mean field games models.
\newblock \emph{Journal de math{\'e}matiques pures et appliqu{\'e}es},
  92\penalty0 (3):\penalty0 276--294, 2009.

\bibitem[Gu{\'e}ant et~al.(2011)Gu{\'e}ant, Lasry, and Lions]{gueant2011mean}
Olivier Gu{\'e}ant, Jean-Michel Lasry, and Pierre-Louis Lions.
\newblock Mean field games and applications.
\newblock In \emph{Paris-Princeton lectures on mathematical finance 2010},
  pages 205--266. Springer, 2011.

\bibitem[Guo and Lee(2017)]{guo2017mean}
Xin Guo and Joon~Seok Lee.
\newblock Mean field games with singular controls of bounded velocity.
\newblock \emph{arXiv preprint arXiv:1703.04437}, 2017.

\bibitem[Huang et~al.(2006)Huang, Malham{\'e}, Caines, et~al.]{huang2006large}
Minyi Huang, Roland~P Malham{\'e}, Peter~E Caines, et~al.
\newblock Large population stochastic dynamic games: closed-loop mckean-vlasov
  systems and the nash certainty equivalence principle.
\newblock \emph{Communications in Information \& Systems}, 6\penalty0
  (3):\penalty0 221--252, 2006.

\bibitem[Lacker(2015)]{lacker2015mean}
Daniel Lacker.
\newblock Mean field games via controlled martingale problems: existence of
  markovian equilibria.
\newblock \emph{Stochastic Processes and their Applications}, 125\penalty0
  (7):\penalty0 2856--2894, 2015.

\bibitem[Laetsch(1975)]{laetsch1975uniqueness}
Theodore Laetsch.
\newblock A uniqueness theorem for elliptic quasi-variational inequalities.
\newblock \emph{Journal of Functional Analysis}, 18\penalty0 (3):\penalty0
  286--287, 1975.

\bibitem[Lasry and Lions(2006{\natexlab{a}})]{lasry2006jeux}
Jean-Michel Lasry and Pierre-Louis Lions.
\newblock Jeux {\`a} champ moyen. i--le cas stationnaire.
\newblock \emph{Comptes Rendus Math{\'e}matique}, 343\penalty0 (9):\penalty0
  619--625, 2006{\natexlab{a}}.

\bibitem[Lasry and Lions(2006{\natexlab{b}})]{lasry2006jeux2}
Jean-Michel Lasry and Pierre-Louis Lions.
\newblock Jeux {\`a} champ moyen. ii--horizon fini et contr{\^o}le optimal.
\newblock \emph{Comptes Rendus Math{\'e}matique}, 343\penalty0 (10):\penalty0
  679--684, 2006{\natexlab{b}}.

\bibitem[Lasry and Lions(2007)]{lasry2007mean}
Jean-Michel Lasry and Pierre-Louis Lions.
\newblock Mean field games.
\newblock \emph{Japanese Journal of Mathematics}, 2\penalty0 (1):\penalty0
  229--260, 2007.

\bibitem[Lions(2007)]{lions2007cours}
Pierre-Louis Lions.
\newblock Cours au college de france.
\newblock \emph{www.college-de-france.fr}, 2011, 2007.

\bibitem[Nutz(2016)]{nutz2016mean}
Marcel Nutz.
\newblock A mean field game of optimal stopping.
\newblock \emph{arXiv preprint arXiv:1605.09112}, 2016.

\end{thebibliography}

\appendix
\section*{Appendix}

\section{Results on the impulse control problem}
The problem of impulse control is classical, we refer to the book of A. Bensoussan and J.-L. Lions \citep{bensoussan1984impulse} for a more complete presentation of the problem. The first part of this appendix is dedicated to time dependent quasi-variational inequalities(QVI), the second one to stationary QVI. We fix a probability space $(\Omega, \mathcal{A}, \mathbb{P})$.

\subsection{The time dependent setting}
In this time dependent setting, we fix a final time $T$. The problem of impulse control consists in minimizing the following expectation :
\begin{equation}\label{icpbm}
\inf_{(\tau_i)_i, (\xi_i)_i} \mathbb{E}[ \int_0^T f(s,X_s) ds + \sum_{ i =1 }^{\#(\tau_j)_j} k(\tau_i, X_{\tau_i^-}, \xi_i)];
\end{equation}
where the infimum is taken over the (finite and infinite) sequences $(\tau_i)_i$ of times such that $0 \leq \tau_i < \tau_{i+1}$ and over the sequences $(\xi_i)_i$ valued in the finite set $K$. The (random) sequences $(\tau_i)_i$ and $(\xi_i)_i$ are measurable with respect to the  $\sigma$-algebra generated by the process $(X_s)_{s\geq 0}$, which is defined below in (\ref{defX}). The function $f \in L^2((0,T), \mathbb{T}^d)$ denotes the running cost and $k \in L^{\infty}((0,T)  \times \mathbb{T}^d \times K)$ denotes the cost of the jumps (i.e. $k(t,x,\xi)$ is the cost paid to use the jump $\xi$ at the time $t$ and the position $x$). In $(\ref{icpbm})$, $(X_s)_s$ is the process given by
\begin{equation}\label{defX}
\begin{cases}
\forall s \in (\tau_i, \tau_{i+1}), dX_s = \sqrt{2 \nu}dW_s; \\
X_{\tau_i^+} = X_{\tau_i^-} + \xi_i ;\\
X_0 = x \in \mathbb{T}^d;
\end{cases}
\end{equation}
where $(W_s)_s$ is a standard brownian motion under  $(\Omega, \mathcal{A}, \mathbb{P})$. The problem of impulse control then consists in choosing the optimal jumps (defined by a time and an element of $K$) to impose on the state $(X_s)_{s \geq 0 }$ in order to minimize $(\ref{icpbm})$. We define $M(k,u)$ by
\[
M(k,u)(t,x) = \inf_{\xi \in K} \{ u(t,x + \xi) + k(t,x,\xi)\}.
\]
We shall note $Mu$ instead of $M(k,u)$ when there is no ambiguity on $k$. Several assumptions can be made on the regularity of $k$ as well as on its dependence on the variable of the problem. We here assume that the following holds in order to work with solutions of the problem which are smooth enough:
\begin{equation}\label{hypk2}
\begin{cases}
\forall \xi \in K, k( \cdot, \xi) \in H^2(\mathbb{T}^d); \\
k^* :x \to \inf_{\xi \in K} k(x, \xi) \in W^{2, \infty}(\mathbb{T}^d);\\
\exists k_0 > 0 \text{ such that } k \geq k_0.
\end{cases}
\end{equation}
We define $H$ by:
$$
H = \{ v \in L^2((0,T), H^1(\mathbb{T}^d)) , \partial_t v \in L^2((0,T), H^{-1}(\mathbb{T}^d))\}.
$$
In the same way value functions of optimal stopping problems can be solutions of obstacle problems \citep{bensoussan2011applications}, we expect the value function of this impulse control problem to be a solution of 
\begin{equation}\label{qvi}
\begin{cases}
\max(u- Mu, -\partial_t u -  \nu\Delta u -f) = 0 \text{ in } (0,T)\times \mathbb{T}^d;\\
u(T) = 0 \text{ in } \mathbb{T}^d.
\end{cases}
\end{equation}
However, just as variational inequalities are the most natural object to represent solutions of obstacle problems, quasi-variational inequalities (QVI) are a natural object associated to $(\ref{qvi})$. The QVI for this impulse control problem, which we denote by $QVI(f,k)$, is:
\begin{equation}{QVI(f,k)}\label{qvifk}
\begin{cases}
u \leq Mu \text{ a.e. in } (0,T)\times \mathbb{T}^d;\\
\forall v \in L^2((0,T),H^1( \mathbb{T}^d)), v \leq Mu, v(T) = 0;\\
-\int_0^T \int_{\mathbb{T}^d} \partial_t u (v -u) + \nu \int_0^T \int_{\mathbb{T}^d} \nabla u \cdot \nabla(v-u) \geq \int_0^T \int_{\mathbb{T}^d} f (v-u);\\
u(T) = 0 \text{ in } \mathbb{T}^d.
\end{cases}
\end{equation}
The function $u$ is here the solution/unknown of $QVI(f,k)$. Finding a solution of $QVI(f,k)$ is not possible for any $f \in L^2$ in any dimension. This is a consequence of the fact that if $f$ is not bounded by below, then we cannot expect in general $u$ to be bounded by below. Indeed in such a case, it is unclear in which sense the condition $u \leq Mu$ has to be understood. Usually a solution of (\ref{qvifk}) is build as the limit of the sequence $(u_n)_{n \in \mathbb{N}}$ defined by :
\begin{equation}\label{defun}
\begin{cases}
\max(-\partial_t u_{n+1} -  \nu \Delta u_{n+1} - f, u_{n+1} - Mu_n) = 0 \text{ in } (0,T) \times \mathbb{T}^d;\\
u_{n+1}(T) = 0 \text{ in } \mathbb{T}^d;
\end{cases}
\end{equation}
with the convention $u_{-1} = + \infty$. The obstacle problem (\ref{defun}) is understand in the sense of variational inequalities. For all $n \in \mathbb{N}$, $u_n \in L^2((0,T), H^2(\mathbb{T}^d))\cap H^1((0,T), L^2(\mathbb{T}^d))$ because (\ref{hypk2}) holds. Moreover, $(u_n)_{n \in \mathbb{N}}$ is a decreasing sequence because $M$ is monotone. If one can find $v \in L^2((0,T), H^1(\mathbb{T}^d))\cap H^1((0,T), H^{-1}(\mathbb{T}^d))$ such that $v \leq u_n$ for all $n \in \mathbb{N}$ then we deduce that :
\begin{equation*}
-\int_0^T \int_{\mathbb{T}^d}\partial_t u_n (v - u_n) + \nu \int_0^T \int_{\mathbb{T}^d} \nabla u_n \cdot \nabla(v - u_n) \geq \int_0^T\int_{\mathbb{T}^d}f (v - u_n).
\end{equation*}
Rearranging this inequality we deduce that :
\begin{equation}\label{estimateun}
\begin{aligned}
\sup_{0 \leq t\leq T} ||u_n(t)||_{L^2}^2 + \nu \int_0^T \int_{\mathbb{T}^d} |\nabla u_n|^2 \leq &\int_{\mathbb{T}^d} u_n(0) v(0) + \int_0^T \int_{\mathbb{T}^d} (\partial_t v -   \nu\Delta v + f, u_n)_{H^{-1} \times H^1}\\
& - \int_0^T \int_{\mathbb{T}^d}f v
\end{aligned}
\end{equation}
Thus we obtain estimates on the sequence $(u_n)_{n \in \mathbb{N}}$ from the existence of a uniform lower bound $v$. Let us note that if $f \geq 0$, then $u_n \geq 0$ for all $n \in \mathbb{N}$ so we can choose $v = 0$ in (\ref{estimateun}). More generally if $f$ is bounded by below by a constant $-C$ then for all $n \in \mathbb{N}$ we deduce that $u_n(t,x) \geq -Ct$ for all $(t,x) \in (0,T) \times \mathbb{T}^d$ and we can choose $v = - Ct$ in (\ref{estimateun}). Moreover, if $f$ is bounded by below by a constant $-C$ , because $k$ satisfies (\ref{hypk2}), the following estimate is classical : 
\begin{equation}\label{reg2qvi}
||\partial_t u||_{L^2} + ||u||_{L^2(H^2)} \leq C_1(1 + ||f||_{L^2})
\end{equation}
where $C_1$ depends only on $C$ and on $k$. We present a result of stability concerning solutions of regular QVI. This result does not seem to be new but we detail the proof for the sake of completeness.

\begin{Prop}\label{continuityqvi}
Let us take any sequence $(f_n)_n$ and a constant $C >0$, such that for all $n \in \mathbb{N}$, $f_n \in L^2((0,T) \times \mathbb{T}^d)$ and $f_n \geq -C$ . We also assume that $k \in L^{\infty}( \mathbb{T}^d \times K)$ satisfies $(\ref{hypk2})$. If $(f_n)_n$ is bounded in  $L^2((0,T) \times \mathbb{T}^d)$ and converges toward $f \in L^2((0,T) \times \mathbb{T}^d)$ in $L^2((0,T), H^{-1}(\mathbb{T}^d))$ with $f\geq - C$, then the sequence $(u_n)_n$ of solutions of $QVI(f_n, k)$ converges toward the solution $u$ of $QVI(f,k)$ in $L^2((0,T), H^1(\mathbb{T}^d))$. 
\end{Prop}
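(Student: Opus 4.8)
The plan is to combine a compactness argument with an explicit construction of test functions adapted to the moving obstacle. First I would extract uniform bounds: since $f_n \geq -C$ for all $n$ and $(f_n)_n$ is bounded in $L^2((0,T)\times\mathbb{T}^d)$, the regularity estimate $(\ref{reg2qvi})$ provides a constant $C_1=C_1(C,k)$ with $\|\partial_t u_n\|_{L^2}+\|u_n\|_{L^2(H^2)}\leq C_1(1+\sup_m\|f_m\|_{L^2})$, so $(u_n)_n$ is bounded in $L^2((0,T),H^2(\mathbb{T}^d))\cap H^1((0,T),L^2(\mathbb{T}^d))$. By the Aubin--Lions lemma this sequence is relatively compact in $L^2((0,T),H^1(\mathbb{T}^d))$ and in $C([0,T],L^2(\mathbb{T}^d))$. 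Hence, up to a subsequence, $u_n\to\bar u$ strongly in $L^2(H^1)$, $\partial_t u_n\rightharpoonup\partial_t\bar u$ weakly in $L^2(L^2)$, and $u_n(T)\to\bar u(T)$ in $L^2$, giving $\bar u(T)=0$. The strong $L^2(H^1)$ convergence is the decisive gain from the second-order estimate: it is exactly what will let me pass to the limit in the quadratic gradient term rather than being stuck with a one-sided lower-semicontinuity inequality.

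I would then show that $\bar u$ solves $QVI(f,k)$ and conclude by uniqueness. The constraint $\bar u\leq M\bar u$ follows from $u_n\leq Mu_n$ together with the continuity of $M$ on $L^2(H^1)$: the operator $Mv=\min_{\xi\in K}\{v(\cdot,\cdot+\xi)+k(\cdot,\xi)\}$ is built from translations (isometries of $H^1$) and finite minima, and $g\mapsto g_+$ is continuous on $H^1$, so $Mu_n\to M\bar u$ strongly in $L^2(H^1)$ and the inequality passes a.e.\ along a further subsequence.

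The heart of the proof, and the main obstacle, is passing to the limit in the variational inequality, precisely because the admissible set $\{v\leq Mu_n\}$ depends on $u_n$ (this is what distinguishes a QVI from a plain variational inequality). Given any test function $w\leq M\bar u$ with $w(T)=0$, I would build adapted competitors by setting $w_n:=w-(M\bar u-Mu_n)_+$. A short case distinction shows $w_n\leq Mu_n$ everywhere; moreover $w_n\in L^2(H^1)$, and since $u_n(T)=\bar u(T)=0$ forces $Mu_n(T)=M\bar u(T)=k^*$, one gets $w_n(T)=w(T)=0$. Finally $w_n\to w$ strongly in $L^2(H^1)$, because $Mu_n\to M\bar u$ there and the positive part is continuous on $H^1$. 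Inserting $w_n$ into the variational inequality satisfied by $u_n$ and using the strong convergence of $u_n$ and $w_n$ in $L^2(H^1)$, the weak convergence of $\partial_t u_n$ in $L^2(L^2)$, and $f_n\to f$ in $L^2(H^{-1})$, every term converges; in particular $\int_0^T\int_{\mathbb{T}^d}|\nabla u_n|^2\to\int_0^T\int_{\mathbb{T}^d}|\nabla\bar u|^2$ thanks to the strong convergence, and in the limit $\bar u$ satisfies the variational inequality of $QVI(f,k)$.

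Since $f\geq -C$ (as a weak $L^2$ limit of the $f_n$), the limiting $QVI(f,k)$ admits a unique solution (see \citep{bensoussan1984impulse}), whence $\bar u=u$. Because this limit does not depend on the extracted subsequence, the standard subsequence argument upgrades the convergence to that of the whole sequence $(u_n)_n$ toward $u$ in $L^2((0,T),H^1(\mathbb{T}^d))$, which is the assertion.
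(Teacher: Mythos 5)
Your proof is correct and follows essentially the same route as the paper: uniform bounds from (\ref{reg2qvi}), compactness giving strong $L^2((0,T),H^1)$ convergence of a subsequence, test functions adapted to the moving obstacle by shifting with the difference of the obstacles (the paper takes the global shift $v_n = v - Mu^* + Mu_n$, which needs no case distinction, where you take the one-sided $w_n = w - (M\bar u - Mu_n)_+$), and conclusion by uniqueness of solutions of the QVI. The only mechanical difference is in the limit passage: you pass term by term using the strong $L^2(H^1)$ convergence of $w_n$, while the paper isolates the error term $\int_0^T\int_{\mathbb{T}^d}(\partial_t u_n + \nu\Delta u_n - f_n)(Mu_n - Mu^*)$ and kills it using the $L^2$ bound on the parabolic term together with $\|Mu_n - Mu^*\|_{L^2} \to 0$; both mechanisms are sound.
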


\begin{proof}
The sequence $(||f_n||_{L^2})_n$ is bounded. Hence $(u_n)_n$ is a bounded sequence of $L^2((0,T), H^2(\mathbb{T}^d))\cap H^1((0,T), L^2(\mathbb{T}^d))$. Extracting a subsequence if necessary, it converges to a limit $u^* \in L^2((0,T), H^2(\mathbb{T}^d))$ for the $L^2((0,T), H^1(\mathbb{T}^d))$ norm. The limit $u^*$ satisfies $u^* \leq Mu^*$ almost everywhere. Let us take $v \in L^2((0,T), H^1(\mathbb{T}^d))$ such that $v \leq Mu^*$. Obviously the following holds :
\[
v_n := v- Mu^* + M u_n \leq M u_n.
\]
Thus because of $QVI(f_n,k)$, we obtain
\[
-\int_0^T \int_{\mathbb{T}^d} \partial_t u_n (v_n -u_n) +  \nu\int_0^T \int_{\mathbb{T}^d} \nabla u_n \cdot \nabla(v_n-u_n) \geq \int_0^T \int_{\mathbb{T}^d} f_n (v_n-u_n).
\]
Re arranging this inequality leads to
\[
\begin{aligned}
\int_0^T \int_{\mathbb{T}^d} f_n (v-u_n) \leq & -\int_0^T \int_{\mathbb{T}^d} \partial_t u_n (v -u_n) +  \nu\int_0^T \int_{\mathbb{T}^d} \nabla u_n \cdot \nabla(v-u_n);\\
& - \int_0^T \int_{\mathbb{T}^d} (\partial_t u_n +  \nu\Delta u_n - f_n)(M u_n - Mu^*).
\end{aligned}
\]
Let us remark that $(||M u_n - Mu^*||_{L^2})_n$ converges to $0$ as $n$ goes to infinity. Thus, because $(\partial_t u_n + \Delta u_n - f_n)_n$ is bounded in $L^2$, passing to the limit in the previous equation we obtain
\[
-\int_0^T \int_{\mathbb{T}^d} \partial_t u^* (v -u^*) + \nu \int_0^T \int_{\mathbb{T}^d} \nabla u^* \cdot \nabla(v-u^*) \geq \int_0^T \int_{\mathbb{T}^d} f (v-u^*).
\]
We conclude by the uniqueness of solutions of $QVI$ \citep{laetsch1975uniqueness}, that $u^* = u$, the only solution of this QVI.
\end{proof}

We now present a result on weaker QVI. To pass to the limit $\epsilon \to 0$ in (\ref{onejump}), we need an estimate for right hand side $f$ which are only in $L^2((0,T), H^{-1}(\mathbb{T}^d))$. The following lemma gives such an estimate for a QVI in which we do not impose the constraint $u \leq M(k,u)$ on the whole space but only on the part which is of interest in $(\ref{onejump})$, for a given cost function $k$. The new constraint we impose is that 
\[
\forall \xi \in K : \mathbb{1}_{\{V(\xi) > 0 \}} (u(t,x) - k(x,\xi) - u(t,x + \xi)) \leq 0.
\]
We note $\mathcal{K}(k,u)$ the convex closed set :
\[
\mathcal{K}(k,u) := \{ v \in L^2((0,T), H^1(\mathbb{T}^d)), \forall \xi \in K : \mathbb{1}_{\{V(\xi) > 0 \}} (v(t,x) - k(x,\xi) - v(t,x + \xi)) \leq 0\}.
\]
\begin{Prop}\label{weakqvi}
Assume that there exists $V$ satisfying (\ref{hypV}) for which hypothesis \ref{hypmultiple} (in part 1 ) holds. We note $k$ and $w$ the couple given by hypothesis \ref{hypmultiple}. Then for any $f \in L^2((0,T), H^{-1}(\mathbb{T}^d)) \cap \mathcal{M}_b((0,T) \times \mathbb{T}^d)$ there exists $u \in L^2((0,T), H^1(\mathbb{T}^d))$ such that :
\begin{equation}\label{weakqviformulation}
\begin{cases}
u \in \mathcal{K}(k,u);\\
\forall v \in H, v \in \mathcal{K}(k,u);\\
-\int_0^T \int_{\mathbb{T}^d} \partial_t v (v -u) + \nu \int_0^T \int_{\mathbb{T}^d} \nabla u \cdot \nabla(v-u) + \frac{1}{2}\int_{\mathbb{T}^d}|v(T)|^2 \geq \int_0^T \int_{\mathbb{T}^d} f (v-u);
\end{cases}
\end{equation}
Moreover we have the estimate 
\[
||u||_{L^{\infty}(L^2)} + ||u||_{L^2(H^1)} \leq C ( 1 + ||f||_{L^2(H^{-1})});
\]
where $C$ only depends on $K$ and $\frac{||w||_{\infty}}{ \inf k}$.
\end{Prop}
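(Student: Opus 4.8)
The structure I would exploit first is that, despite the notation, the set $\mathcal{K}(k,u)$ does not actually depend on $u$: the constraints only involve the fixed sets $A_\xi=\{V(\xi)>0\}$, so $\mathcal{K}(k,u)=\mathcal{K}$ is a fixed closed convex subset of $L^2((0,T),H^1(\mathbb{T}^d))$, cut out by the finitely many linear inequalities $v(t,x)-v(t,x+\xi)\le k(x,\xi)$ on $A_\xi$. Consequently \eqref{weakqviformulation} is a genuine parabolic variational inequality over a fixed convex set rather than a true quasi-variational inequality, and this is what makes existence tractable. I would first record that the function $w$ supplied by hypothesis \ref{hypmultiple} lies in $\mathcal{K}$ and satisfies $w(T)=0$ (on $A_\xi$ the complementarity forces $w(x)-k(x,\xi)-w(x+\xi)=0$), so it is an admissible comparison and test function. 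The global plan is then to prove existence and the a priori estimate for smooth $f\in L^2((0,T)\times\mathbb{T}^d)$, and to recover the general case $f\in L^2(H^{-1})\cap\mathcal{M}_b$ by approximating with smooth $f_n\to f$ in $L^2(H^{-1})$ and passing to the limit through the uniform estimate.

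For smooth $f$ I would solve the problem by penalizing the convex constraint: for $\eta>0$ let $u_\eta\in H$ solve
\[
-\partial_t u_\eta -\nu\Delta u_\eta + B_\eta(u_\eta)=f \text{ in }(0,T)\times\mathbb{T}^d,\qquad u_\eta(T)=0,
\]
where $B_\eta=\tfrac1\eta(\mathrm{Id}-P_{\mathcal{K}})$ is the Yosida approximation of the normal cone of $\mathcal{K}$ (monotone and Lipschitz). Existence and uniqueness of $u_\eta$ follow from monotone-operator theory (a Galerkin scheme in space together with the coercivity furnished by the parabolic term), and an equivalent route is a Rothe time-discretization reducing each step to a coercive elliptic variational inequality over $\mathcal{K}$ solvable by Stampacchia's theorem. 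The whole difficulty is then concentrated in bounding $u_\eta$ uniformly in $\eta$.

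The a priori estimate is the crux, and its delicate feature is that the constant must be independent of the (arbitrary, possibly rough) sets $A_\xi$, depending only on $K$ and $||w||_\infty/\inf k$. I would test the penalized equation against $u_\eta-w$; since $w\in\mathcal{K}$, monotonicity gives $\langle B_\eta(u_\eta),u_\eta-w\rangle\ge 0$ with the right sign, so the penalty term is discarded. The resulting parabolic energy identity, using $u_\eta(T)=w(T)=0$ and integrating on $[t,T]$, controls $\sup_t||u_\eta(t)||_{L^2}$ and $||\nabla u_\eta||_{L^2(L^2)}$ by $||f||_{L^2(H^{-1})}$ and a norm of $w$. The reason the constant can be made to involve only $||w||_\infty/\inf k$, rather than the geometry of $A$, is that this ratio bounds the maximal number $N$ of successive admissible jumps: along the constraint $w$ decreases by at least $\inf k$ at each jump while remaining bounded by $||w||_\infty$, so no point can jump more than $N\sim 2||w||_\infty/\inf k$ times before leaving $A$. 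This finite multiplicity makes the nonlocal coupling in the constraint controllable uniformly in $A$, and allows $w$ to be replaced, in the part of the estimate fixing the low-frequency ($L^2$, mean) component of $u_\eta$, by an $A$-independent barrier built from $N$, $k$ and the lower bound on $f$.

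Equipped with the uniform bound, $u_\eta\rightharpoonup u$ weakly in $L^2((0,T),H^1(\mathbb{T}^d))$ along a subsequence, and $B_\eta(u_\eta)\to0$ forces $u\in\mathcal{K}$. To close \eqref{weakqviformulation} I would transfer the time derivative onto the test function: for $v\in H\cap\mathcal{K}$,
\[
-\int_0^T\langle\partial_t u_\eta,v-u_\eta\rangle = -\int_0^T\langle\partial_t v,v-u_\eta\rangle + \tfrac12||v(T)||_{L^2}^2 - \tfrac12||(v-u_\eta)(0)||_{L^2}^2,
\]
and discarding the nonpositive last term produces exactly the weak inequality with the boundary contribution $\tfrac12\int_{\mathbb{T}^d}|v(T)|^2$ — which is precisely what lets the formulation dispense with any time regularity or initial trace of $u$. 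Weak lower semicontinuity of $v\mapsto\nu\int|\nabla v|^2$ together with the monotonicity inequality for $B_\eta$ then let the limit pass, giving \eqref{weakqviformulation} for smooth $f$, and the approximation step delivers the general $f$ with the stated estimate. I expect the main obstacle to be this third step: obtaining the estimate with a constant depending only on $K$ and $||w||_\infty/\inf k$, since its uniformity in $A$ is exactly the property needed downstream in Lemma \ref{estimate1}, and it seems to require the jump-multiplicity argument rather than a direct energy estimate.
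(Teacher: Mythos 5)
You have correctly isolated where the difficulty sits, but the step you defer is precisely the proof, and the sketch you give for it would not work. Testing the penalized equation against $u_\eta - w$ yields an energy bound whose constants involve $||\nabla w||_{L^2(L^2)}$ and $||\partial_t w||_{L^2(H^{-1})}$, i.e. full parabolic norms of $w$ on which hypothesis \ref{hypmultiple} gives no quantitative control; the statement requires a constant depending only on $K$ and $||w||_{\infty}/\inf k$. Your proposed repair --- a barrier ``built from $N$, $k$ and the lower bound on $f$'' --- cannot be carried out, because in this proposition $f \in L^2((0,T),H^{-1}) \cap \mathcal{M}_b$ has no lower bound; dispensing with the lower-bound hypothesis is the entire point of this weak QVI result (downstream it is applied with $f = \Delta m$ in Lemma \ref{estimate1}). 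The paper's mechanism is different: $w$ enters \emph{only} through the jump-multiplicity bound $n^* \leq 2||w||_{\infty}/\inf k$ (which you do identify), and the admissible comparison function is built from $f$ itself, namely $\tilde f := \min_{p \leq n^*} \min_{(\xi_1,\dots,\xi_p)} f(\cdot,\cdot + \sum_{i=1}^p \xi_i)$ --- a minimum of finitely many translates that is well defined exactly because $f \in \mathcal{M}_b$, an assumption your proof never invokes --- together with $\tilde u$ solving $-\partial_t \tilde u - \nu \Delta \tilde u = \tilde f$, $\tilde u(T)=0$. One then checks that $\tilde u$ is admissible in every constraint set, tests with $v = \tilde u$, and obtains the energy estimate with constants controlled by $||\tilde f||_{L^2(H^{-1})} \leq \tilde C\, ||f||_{L^2(H^{-1})}$, $\tilde C = \tilde C(K,n^*)$; the pointwise squeeze $\tilde u \leq u \leq u_0$ then gives the $L^{\infty}(L^2)$ bound. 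Without this construction (or an equivalent one) your argument proves existence only with a constant depending on the $H$-norm of $w$, which is not the claimed estimate and would not suffice for Lemma \ref{estimate1}, where independence of the constant from $A$ is the whole point.

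A secondary remark: your reduction to a variational inequality over a \emph{fixed} convex set rests on the literal definition of $\mathcal{K}(k,u)$ (which indeed does not involve its second argument as printed), but the paper uses the proposition with test functions satisfying $v \leq M(k,\tilde u)$ on $A$, i.e. a constraint set depending on the solution, and accordingly proves it by the classical decreasing iteration $u_0 \geq u_1 \geq \dots$, each step a weak variational inequality relative to the previous iterate, with $\tilde u$ serving simultaneously as uniform lower bound and admissible test function. Under that quasi-variational reading your fixed-set penalization scheme would need an additional iteration or fixed-point layer that your proposal does not contain. This structural discrepancy is less serious than the missing estimate, but it means your framework does not directly produce the object the paper actually uses.
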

The idea of the proof is that the QVI (\ref{weakqviformulation}) is associated to a formal impulse control problem in which one can only use the impulse control $\xi$ on $\{V(\xi) > 0 \}$. Because hypothesis \ref{hypmultiple} is satisfied, the QVI is somehow well defined and thus we can solve it for unbounded cost functions $f$.

\begin{proof}
Denoting $k$ and $w$ the functions given by hypothesis \ref{hypmultiple}, there exists $n^* \in \mathbb{N}$ such that :
\[
\forall (t,x) \in \cup_{\xi \in K}, \nexists (\xi_1, ..., \xi_p) \in K^p, p \geq n^*, \forall k \leq (p-1), (t,x + \sum_{i=1}^k \xi_i) \in \{V(\xi_p) > 0 \}.
\]
This fact is a direct consequence of $w \in L^{\infty}$ and is obtained by evaluating $w(t, x + \sum_{i =1}^k \xi_i)$. Moreover, we have :
\[
n^* \leq \frac{2 ||w||_{L^{\infty}}}{ \inf_{x, \xi} k(x, \xi)}
\]

We now define $\tilde{f}$ by
\[
\tilde{f} = \min_{p \leq n^*} \min_{(\xi_1, .. , \xi_p)} f( \cdot, \cdot + \sum_{i=1}^p \xi_i);
\]
The function $\tilde{f} \in L^2((0,T), H^{-1}(\mathbb{T}^d))$ is well defined because $f \in L^2((0,T), H^{-1}(\mathbb{T}^d)) \cap \mathcal{M}_b((0,T) \times \mathbb{T}^d)$ and it represents the best running cost one can face by jumping at the same time a maximum of $n^*$ times. We define $\tilde{u}$ by:
\[
\begin{cases}
-\partial_t \tilde{u}-  \nu\Delta \tilde{u} = \tilde{f} \text{ in } (0,T) \times \mathbb{T}^d;\\
\tilde{u}(T) = 0 \text{ in } \mathbb{T}^d.
\end{cases}
\]
As already mentioned above, an existence result for QVI usually comes from the existence of a lower bound for an approximating sequence. The function $\tilde{u}$ plays the role of a lower bound for the sequence $(u_n)_{n \in \mathbb{N}}$ that we now define. We denote by $u_0 \in H$ the only solution of
\[
\begin{cases}
-\partial_t u_0 -  \nu\Delta u_0 = f \text{ in } (0,T) \times \mathbb{T}^d;\\
u_0(T) = 0 \text{ in } \mathbb{T}^d.
\end{cases}
\]
We then define for all $n \in \mathbb{N}$, $u_n \in L^2((0,T), H^1(\mathbb{T}^d))$ by $u_{n+1} \in L^2((0,T), H^1(\mathbb{T}^d))$ is a solution of the weak variational inequality (we refer to \citep{bensoussan1984impulse} for a presentation of weak variational inequalities) :
\begin{equation}\label{defunqvi}
\begin{cases}
\forall v \in H, v \in \mathcal{K}(k,u_{n+1});\\
-\int_0^T \int_{\mathbb{T}^d} \partial_t v (v -u_{n+1}) +  \nu\int_0^T \int_{\mathbb{T}^d} \nabla u_{n+1} \cdot \nabla(v-u_{n+1}) + \frac{1}{2}\int_{\mathbb{T}^d} | v(T)|^2  \geq \int_0^T \int_{\mathbb{T}^d} f (v-u_{n+1}).
\end{cases}
\end{equation}
Straightforwardly, we deduce iteratively that for every $n \in \mathbb{N}$, $(u_n)_{n \in \mathbb{N}}$ is well defined, $u_{n+1} \leq u_n$, $\tilde{u} \in \mathcal{K}(k,u_n)$. The last point is a direct consequence of the definition of $\mathcal{K}(k, \cdot)$ and $\tilde{u}$. Evaluating the second line of (\ref{defunqvi}) with $v = \tilde{u}$, we deduce :
\begin{equation}\label{preestimate}
 \nu\int_0^T \int_{\mathbb{T}^d} |\nabla u_n|^2 \leq - \int_0^T (- \partial_t \tilde{u} - f, \tilde{u} - u_n)_{H^{-1} \times H^1} +  \nu\int_0^T \int_{\mathbb{T}^d}\nabla u_n \cdot \nabla \tilde{u} .
\end{equation}
Thus, $(u_n)_{n \in \mathbb{T}^d}$ is a bounded sequence of $L^2((0,T), H^1(\mathbb{T}^d))$. Because, it is also a decreasing sequence, it converges weakly in $L^2((0,T), H^1(\mathbb{T}^d))$ to a limit $u \in L^2((0,T), H^1(\mathbb{T}^d))$. It follows that
\[
\begin{cases}
\forall v \in H, v \in \mathcal{K}(k,u);\\
-\int_0^T \int_{\mathbb{T}^d} \partial_t v (v -u) +  \nu\int_0^T \int_{\mathbb{T}^d} \nabla u \cdot \nabla(v-u) + \frac{1}{2}\int_{\mathbb{T}^d} | v(T)|^2 \geq \int_0^T \int_{\mathbb{T}^d} f (v-u).
\end{cases}
\]
Moreover, because 
\[
\tilde{u} \leq u \leq u_0,
\]
and $\tilde{u} , u_0 \in L^{\infty}((0,T), L^2(\mathbb{T}^d))$, we obtain that $u \in L^{\infty}((0,T), L^2(\mathbb{T}^d))$. Finally, let us remark that 
\[
||\tilde{u}||_{L^{\infty}(L^2)} + ||\tilde{u}||_{L^2(H^1)} \leq C ( 1 + ||\tilde{f}||_{L^2(H^{-1})}),
\]
where $C$ does not depend on $f$. Moreover, by construction of $\tilde{f}$, 
\[
||\tilde{f}||_{L^2(H^{-1})} \leq \tilde{C} ||f||_{L^2(H^{-1})},
\]
where $\tilde{C}$ depends only on $K$, and $n^*$. Hence, $u$ satisfies :
\[
||u||_{L^{\infty}(L^2)} + ||u||_{L^2(H^1)} \leq C ( 1 + ||f||_{L^2(H^{-1})}),
\]
where $C$ depends only on $K$ and $\frac{||w||_{\infty}}{ \inf k}$ (and on $\nu$ and $d$).
\end{proof}

\subsection{The stationary setting}
In this section we give the analogue of the results of the previous section in a stationary setting. The two results in question are proved by following exactly the same argument as in the previous part. Thus we do not detail the proofs of those results. We still fix a finite set $K \subset \mathbb{T}^d$ and we define $M(k,u)$ by
\[
M(k,u)(x) = \inf_{\xi \in K} k(x,\xi) + u(x + \xi).
\]
We fix a parameter $\lambda > 0$ which describes the intertemporal preference rate in the following impulse control problem :
\begin{equation}\label{icp2}
\inf_{(\tau_i)_i, (\xi_i)_i} \mathbb{E}[ \int_0^{\infty} e^{-\lambda s}f(X_s) ds + \sum_{ i =1 }^{\#(\tau_j)_j} e^{-\lambda \tau_i}k(X_{\tau_i^-}, \xi_i)];
\end{equation}
where the trajectories are given by :
\begin{equation*}
\begin{cases}
\forall s \in (\tau_i, \tau_{i+1}), dX_s = \sqrt{2 \nu}dW_s; \\
X_{\tau_i^+} = X_{\tau_i^-} + \xi_i ;\\
X_0 = x \in \mathbb{T}^d;
\end{cases}
\end{equation*}
where $(W_s)_{s \geq 0}$ is a standard brownian motion under $(\Omega, \mathcal{A}, \mathbb{P})$ and $(\xi_i)_i$ and $(\tau_i)_i$ are the controls. In this problem $f$ is the running cost and $k$ the cost of jumps. We made for the rest of this section the assumption that $k$ satisfies (\ref{hypk2}). If the running cost $f \in L^2(\mathbb{T}^d)$ and $f \geq -C$ for some positive constant $C$, then the value function $u$ of (\ref{icp2}) is the unique solution in $H^1(\mathbb{T}^d)$ of the following QVI :
\begin{equation}{SQVI(f,k)}\label{sqvifk}
\begin{cases}
u \leq Mu \text{ a.e. in } \mathbb{T}^d;\\
\forall v \in H^1( \mathbb{T}^d), v \leq M(k,u);\\
  \nu \int_{\mathbb{T}^d} \nabla u \cdot \nabla(v-u) + \int_{\mathbb{T}^d} \lambda u(v -u) \geq  \int_{\mathbb{T}^d} f (v-u).
\end{cases}
\end{equation}
Moreover, $u$ is in fact in $H^2(\mathbb{T}^d)$ and satisfies in $L^2$ :
\[
\max(- \nu\Delta u + \lambda u - f, u - Mu) = 0 \text{ in } \mathbb{T}^d.
\]
We have the following result : 
\begin{Prop}\label{continuityqvi2}
Let us take any sequence $(f_n)_n$ and a constant $C >0$, such that for all $n \in \mathbb{N}$, $f_n \in L^2(\mathbb{T}^d)$ and $f_n \geq -C$ . We also assume that $k \in L^{\infty}( \mathbb{T}^d \times K)$ satisfies $(\ref{hypk2})$. If $(f_n)_n$ converges toward $f \in L^2( \mathbb{T}^d)$ in $L^2$ with $f\geq - C$, then the sequence $(u_n)_n$ of solutions of $SQVI(f_n, k)$ converges toward the solution $u$ of $SQVI(f,k)$ in $H^1(\mathbb{T}^d)$. 
\end{Prop}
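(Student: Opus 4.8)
The plan is to transcribe the proof of Proposition \ref{continuityqvi} into the elliptic setting, replacing the parabolic a priori estimates by their stationary counterparts. First I would establish uniform bounds on the sequence $(u_n)_n$ of solutions of $SQVI(f_n,k)$. Since $f_n \geq -C$ for every $n$, the constant $-C/\lambda$ is a subsolution and plays the role that $v = -Ct$ played in estimate (\ref{estimateun}), furnishing a uniform lower bound $u_n \geq -C/\lambda$. Because $(f_n)_n$ converges in $L^2(\mathbb{T}^d)$ it is bounded there, so the elliptic analogue of the regularity estimate (\ref{reg2qvi}) gives a bound $\|u_n\|_{H^2(\mathbb{T}^d)} \leq C_1(1 + \|f_n\|_{L^2})$ uniform in $n$. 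Extracting a subsequence if necessary, $(u_n)_n$ converges weakly in $H^2(\mathbb{T}^d)$ and, by the compact embedding $H^2 \hookrightarrow H^1$ on the torus, strongly in $H^1(\mathbb{T}^d)$ toward some $u^* \in H^2(\mathbb{T}^d)$. The constraint $u^* \leq Mu^*$ then holds almost everywhere, as it passes to the strong $L^2$ limit.

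The key difficulty, exactly as in the time dependent case, is that the obstacle $Mu_n$ in $SQVI(f_n,k)$ depends on the unknown itself, so a test function admissible for the limit problem need not be admissible along the sequence. I would circumvent this with the same substitution: given any $v \in H^1(\mathbb{T}^d)$ with $v \leq Mu^*$, set
\[
v_n := v - Mu^* + Mu_n,
\]
which satisfies $v_n \leq Mu_n$ and is hence admissible in $SQVI(f_n,k)$. Inserting $v_n$ into the variational inequality and rearranging produces an inequality for $u_n$ tested against $v$, up to error terms carrying the factor $Mu_n - Mu^*$. The crucial observation is that $\|Mu_n - Mu^*\|_{L^2} \to 0$: since $M(k,w)(x) = \inf_{\xi \in K}\{k(x,\xi) + w(x+\xi)\}$ is an infimum over the finite set $K$ of translates, it is Lipschitz continuous for the $L^2$ norm, and $u_n \to u^*$ strongly in $L^2(\mathbb{T}^d)$.

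Since moreover $(-\nu\Delta u_n + \lambda u_n - f_n)_n$ is bounded in $L^2(\mathbb{T}^d)$ and $f_n \to f$ in $L^2(\mathbb{T}^d)$, every error term vanishes in the limit and I would recover
\[
\nu \int_{\mathbb{T}^d} \nabla u^* \cdot \nabla(v - u^*) + \int_{\mathbb{T}^d} \lambda u^*(v - u^*) \geq \int_{\mathbb{T}^d} f(v - u^*),
\]
which, together with $u^* \leq Mu^*$, means that $u^*$ solves $SQVI(f,k)$. By the uniqueness of solutions of the stationary QVI \citep{laetsch1975uniqueness}, $u^* = u$. As every subsequence admits a further subsequence converging to this same limit, the whole sequence $(u_n)_n$ converges to $u$ in $H^1(\mathbb{T}^d)$. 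The main obstacle is precisely the handling of the self-referential obstacle $Mu_n$; once the $v_n$ substitution and the $L^2$-continuity of $M$ are in place, the remaining passage to the limit is routine.
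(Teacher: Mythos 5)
Your proposal is correct and is precisely what the paper intends: the paper omits the proof of Proposition \ref{continuityqvi2}, stating only that it follows "exactly the same argument" as the time-dependent Proposition \ref{continuityqvi}, and your transcription (uniform $H^2$ bound via the lower bound $-C/\lambda$ and the elliptic analogue of (\ref{reg2qvi}), compactness in $H^1$, the substitution $v_n := v - Mu^* + Mu_n$, the $L^2$-Lipschitz continuity of $M$, the bounded term $-\nu\Delta u_n + \lambda u_n - f_n$ paired against $Mu_n - Mu^*$, and uniqueness via \citep{laetsch1975uniqueness}) reproduces that argument faithfully in the stationary setting.
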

We introduce the following notation :
We note $\mathcal{K}(k,u)$ the convex closed set :
\[
\mathcal{K}(k,u) := \{ v \in H^1(\mathbb{T}^d), \forall \xi \in K : \mathbb{1}_{\{V(\xi) > 0 \}} (v(x) - k(x,\xi) - v(x + \xi)) \leq 0\}.
\]

\begin{Prop}\label{weakqvi2}
Assume that $V$ satisfies hypothesis \ref{hypmultiple2} ( in part 1). We denote by $k$ and $w$ the couple given by hypothesis \ref{hypmultiple2}. Then for any $f \in H^{-1}(\mathbb{T}^d) \cap \mathcal{M}_b(\mathbb{T}^d)$ there exists $u \in H^1(\mathbb{T}^d)$ such that :
\begin{equation}\label{weakqviformulation2}
\begin{cases}
u \in \mathcal{K}(k,u);\\
\forall v \in \mathcal{K}(k,u);\\
 \nu  \int_{\mathbb{T}^d} \nabla u \cdot \nabla(v-u) + \int_{\mathbb{T}^d} \lambda u(v -u) \geq  \int_{\mathbb{T}^d} f (v-u);
\end{cases}
\end{equation}
Moreover we have the estimate 
\[
||u||_{L^2} + ||u||_{H^1} \leq C ( 1 + ||f||_{H^{-1}});
\]
where $C$ only depends on $K$, $\lambda$, $\nu$ and $\frac{||w||_{\infty}}{ \inf k}$.
\end{Prop}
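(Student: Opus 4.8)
The plan is to transpose the proof of Proposition \ref{weakqvi} essentially verbatim to the stationary setting, the only structural differences being that the parabolic operator $-\partial_t - \nu\Delta$ is replaced by the coercive elliptic operator $-\nu\Delta + \lambda$ with $\lambda>0$, and that the terminal term $\frac{1}{2}\int_{\mathbb{T}^d}|v(T)|^2$ together with all time-regularity bookkeeping simply disappear. First I would exploit hypothesis \ref{hypmultiple2}: from $w \in L^{\infty}$ and $k \geq k_0 > 0$ one extracts an integer $n^*$ bounding the number of consecutive admissible jumps any trajectory can perform, with the explicit control $n^* \leq 2||w||_{L^{\infty}}/\inf k$. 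This is obtained exactly as in the time-dependent case, by evaluating $w(x + \sum_{i=1}^{p}\xi_i)$ along an admissible chain of jumps and invoking $k \geq k_0$.

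Next I would introduce the auxiliary datum
\[
\tilde f := \min_{p \leq n^*}\ \min_{(\xi_1,\dots,\xi_p)} f\bigl(\cdot + \textstyle\sum_{i=1}^{p}\xi_i\bigr),
\]
which is a \emph{finite} minimum, hence well defined in $H^{-1}(\mathbb{T}^d) \cap \mathcal{M}_b(\mathbb{T}^d)$ because $f$ is, and which satisfies $||\tilde f||_{H^{-1}} \leq \tilde C\, ||f||_{H^{-1}}$ with $\tilde C$ depending only on $K$ and $n^*$. I would then let $\tilde u$ solve the linear equation $-\nu\Delta\tilde u + \lambda\tilde u = \tilde f$ on $\mathbb{T}^d$; by construction of $\tilde f$ one checks directly that $\tilde u$ lies in the constraint set $\mathcal{K}(k,u)$, so that $\tilde u$ is an admissible lower bound for the problem, with $||\tilde u||_{H^1} \leq C(1 + ||\tilde f||_{H^{-1}})$.

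The existence of $u$ then follows by the same monotone iteration as in Proposition \ref{weakqvi}: let $u_0$ solve $-\nu\Delta u_0 + \lambda u_0 = f$ and let $u_{n+1}$ be the solution of the stationary variational inequality on $\mathcal{K}(k,u_{n+1})$ with datum $f$. One verifies inductively that $u_{n+1} \leq u_n$ and $\tilde u \in \mathcal{K}(k,u_n)$, so the decreasing sequence is bounded below by $\tilde u$; testing the variational inequality against $v = \tilde u$ yields a uniform $H^1$ bound, and the monotone bounded sequence converges weakly in $H^1(\mathbb{T}^d)$ to a limit $u$ satisfying \eqref{weakqviformulation2}. Equivalently — and this is where the stationary case is genuinely simpler — one may invoke the Lions–Stampacchia theorem directly, since the bilinear form $\nu\int_{\mathbb{T}^d}\nabla u\cdot\nabla v + \lambda\int_{\mathbb{T}^d}uv$ is coercive on $H^1(\mathbb{T}^d)$ (as $\lambda>0$) and $\mathcal{K}(k,u)$ is a nonempty closed convex set containing $\tilde u$.

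The quantitative estimate is obtained exactly as in the parabolic case: using $\tilde u$ as reference point in the coercivity inequality gives $||u||_{L^2} + ||u||_{H^1} \leq C(1 + ||f||_{H^{-1}})$, the constant $C$ inheriting its dependence on $K$, $\lambda$, $\nu$ and $||w||_{\infty}/\inf k$ through $n^*$ and $\tilde C$. The only genuinely delicate point — and the one I expect to be the crux — is the construction of the admissible lower bound $\tilde u$: everything hinges on the finiteness of $n^*$, which is precisely what keeps $\tilde f$ a bona fide element of $H^{-1}\cap\mathcal{M}_b$ with norm controlled by $||f||_{H^{-1}}$ instead of blowing up, and this finiteness is exactly what hypothesis \ref{hypmultiple2} (the existence of a bounded $w$ compatible with the jump cost $k$) guarantees. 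Compared with Proposition \ref{weakqvi}, no time-trace or time-regularity estimate is needed and coercivity is immediate from the zeroth-order term, so the argument is strictly lighter.
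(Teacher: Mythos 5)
Your proposal is correct, and its main line is exactly the paper's: the paper handles Proposition \ref{weakqvi2} by declaring it the stationary analogue of Proposition \ref{weakqvi} and omitting the details, i.e., by precisely the transposition you carry out (the bound $n^* \leq 2\|w\|_{\infty}/\inf k$, the auxiliary datum $\tilde f$, the lower bound $\tilde u$, the decreasing iteration, the test $v=\tilde u$, and the resulting estimate).

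Your parenthetical Lions--Stampacchia remark deserves to be singled out, because it is a genuinely simpler route that only the stationary structure permits, and it is in fact stronger than you state. Since the constraint defining $\mathcal{K}(k,u)$ involves only the competitor $v$ and not $u$, the set $\mathcal{K}$ is a \emph{fixed} closed convex subset of $H^1(\mathbb{T}^d)$, and it is nonempty for free: $v=0$ belongs to it because $k \geq k_0 > 0$, so no admissible lower bound $\tilde u$ is needed to exhibit a point of $\mathcal{K}$. The bilinear form $\nu\int_{\mathbb{T}^d}\nabla u\cdot\nabla v + \lambda\int_{\mathbb{T}^d}uv$ is continuous and coercive on $H^1(\mathbb{T}^d)$ since $\lambda>0$, so Lions--Stampacchia yields existence (and even uniqueness) for \emph{every} $f \in H^{-1}(\mathbb{T}^d)$; testing the variational inequality with $v=0$ gives $\min(\nu,\lambda)\|u\|^2_{H^1} \leq \|f\|_{H^{-1}}\|u\|_{H^1}$, i.e., the claimed estimate with a constant depending only on $\nu$ and $\lambda$. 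In other words, in the stationary setting the whole $n^*$/$\tilde f$ machinery --- and with it the quantitative role of hypothesis \ref{hypmultiple2} --- serves only to keep the argument parallel to the parabolic one, where the absence of coercivity and of time regularity genuinely forces the iterative construction; it is not needed for the statement itself. Both routes are valid; your second one has the merit of making this explicit.
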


\section{Some results on parabolic PDE in time dependent domains}
In this appendix, we are interested in two topics. The first one is the existence of solutions of parabolic PDE in a time dependent domain with non linear boundary conditions of Dirichlet type. The second one is the uniqueness of solutions of a similar problem with Dirichlet boundary conditions. The first results allow us to build functions needed in the proof of the uniqueness of solutions of the Fokker-Planck equations in the first part. The second one is a result which generalizes existing results on the well-posedness of a parabolic PDE in a time dependent domain as we allow the domain to evolve more generally than in the existing literature.

In this appendix, we focus on the heat operator
\[
\partial_t u - \nu \Delta u
\]
where $\nu > 0$, and the domain $B$ on which is posed the PDE is such that $B \subset [0,T] \times \mathbb{T}^d$, where $T >0$ is the final time. We denote by $A = [0,T] \times \mathbb{T}^d \setminus B$.

Given a function $\tilde{u} \in L^2(\mathbb{T}^d)$ and a running cost $f \in L^{\infty}((0,T) \times \mathbb{T}^d)$, the standard Dirichlet problem for a parabolic PDE is  :
\begin{equation}\label{tdpde}
\begin{cases}
\partial_t u - \nu \Delta u = f \text{ in } B;\\
u = \tilde{u} \text{ on } A \cup \{0\}\times \mathbb{T}^d.
\end{cases}
\end{equation}
We denote by $H := L^2((0,T), H^1(\mathbb{T}^d)) \cap H^1((0,T), H^{-1}(\mathbb{T}^d))$. Here we say that $u \in H$ is a strong solution of (\ref{tdpde}) if for any $v \in L^2((0,T),H^1(\mathbb{T}^d))$ such that $v = 0$ almost everywhere on $A$ :
\[
\begin{cases}
\int_0^T \int_{\mathbb{T}^d}(\partial_t u - \nu \Delta u - f, v)_{H^{-1}\times H^1} = 0;\\
u(0) = \tilde{u} \text{ a. e. on } A \cup \{0\}\times \mathbb{T}^d.
\end{cases}
\]
and we say that $u \in L^2((0,T), H^1(\mathbb{T}^d))$ is a weak solution of (\ref{tdpde}) if for any $v \in H$, such that $v = 0$ almost everywhere on $A$ and $v(T) = 0$ :
\[
\int_0^T \int_{\mathbb{T}^d}(-\partial_t v - \nu \Delta v - f, u)_{H^{-1}\times H^1} - \int_{\mathbb{T}^d}\tilde{u}(0)v(0)= 0.
\]
Regularity of solutions of (\ref{tdpde}) usually comes from two types of assumptions on $B$. The first class of assumptions consists in assuming that the set $B$ has a smooth boundary, namely that it evolves smoothly in time, see \citep{calvo2017parabolic}. The second class of assumptions is concerned with monotonicity assumption on the set $B$ see \citep{gianazza1996abstract}. In this last paper, the authors assume that $B$ is non decreasing in time (for the inclusion), that is $A$ is non-increasing in time (for the inclusion).

\subsection{Parabolic PDE in a time dependent domain with non linear boundary conditions}
Through this section, we assume that there exists $V, k, \tilde{u}$ such that
\[
\begin{cases}
V \in L^{\infty}(K,(0,T),\mathbb{T}^d) ;\\
V \geq 0 ;\\
\sum_{\xi \in K} V(\xi,t,x) \leq 1;
\end{cases}
\]
the function $k$ satisfies $(\ref{hypk})$ and $u^* \in H$ is such that
\begin{equation}\label{hypapp}
\begin{cases}
u^* \leq M(k,u^*) \text{ in } (0,T) \times \mathbb{T}^d;\\
u(T) = 0; \\
\forall \xi \in K, V(\xi,t,x) (k(x,\xi) + u^*(t, x+ \xi) - u^*(t,x)) = 0 \text{ a.e. in } (0,T) \times \mathbb{T}^d ;\\
\forall \xi \in K, x \in \tilde{A}_{\xi} \Rightarrow (k(x,\xi) + u^*(x+ \xi) - u^*(x)) = 0 ;\\
\forall \xi \in K, x \in \tilde{A}_{\xi} \Rightarrow \forall \xi' \ne \xi, u^*(x) < k(x,\xi') + u^*(x + \xi').
\end{cases}
\end{equation}

We are here interested in the existence of solutions of the following problem
\begin{equation}\label{qvitdpde}
\begin{cases}
\partial_t u - \nu \Delta u = f \text{ in } B;\\
\forall \xi \in K, V(\xi,t,x) (k(x,\xi) + u(t, x+ \xi) - u(t,x)) = 0 \text{ a.e. in } (0,T) \times \mathbb{T}^d ;\\
u(0) = u^*(0).
\end{cases}
\end{equation}
The problem (\ref{qvitdpde}) is of the type of (\ref{tdpde}) except for the fact that the function which gives the boundary conditions $\tilde{u}$ is here itself a function of $u$. The set $A$ is in this case given by $$A = \cup_{\xi \in K} \{ V(\xi,t,x) > 0\}.$$ We define $B$ by $$B = [0,T] \times \mathbb{T}^d \subset A.$$ The existence of $u^*$ allows us to state the following result :

\begin{Theorem}\label{existenceapp}
Let us assume that (\ref{hypapp}) holds true. If either $B$ is an open set with Lipschitz boundary such that $B \cap (\{0\} \times \mathbb{T}^d)$ is an open set with Lipschitz boundary, or $B$ is non-decreasing in time (for the inclusion), then there exists $\epsilon > 0$ such that for any $f \in L^{\infty}((0,T) \times \mathbb{T}^d)$ such that $||f||_{L^{\infty}} \leq \epsilon$ there exists a strong solution $u$ of :
\begin{equation*}
\begin{cases}
\partial_t u - \nu \Delta u = \partial_t u^* - \nu \Delta u^* + f \text{ in } B;\\
\forall \xi \in K, V(\xi,t,x) (k(x,\xi) + u(t, x+ \xi) - u(t,x)) = 0 \text{ a.e. in } (0,T) \times \mathbb{T}^d ;\\
u(0) = u^*(0).
\end{cases}
\end{equation*}
\end{Theorem}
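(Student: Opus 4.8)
The plan is to linearize the problem around $u^*$ and to solve the resulting linear heat equation posed in the non-cylindrical domain $B$, treating the jump relations as nonlocal boundary conditions. First I would set $w := u - u^*$. Since $u^*$ satisfies the coincidence relations in (\ref{hypapp}), namely $k(x,\xi) + u^*(t,x+\xi) - u^*(t,x) = 0$ on $\{V(\xi)>0\}$, the affine terms $k$ and $u^*$ cancel and the problem for $u$ is equivalent to finding $w \in H$ with
\begin{equation*}
\begin{cases}
\partial_t w - \nu\Delta w = f & \text{in } B;\\
w(t,x) = w(t,x+\xi) \text{ on } \{V(\xi)>0\},\ \forall \xi \in K;\\
w(0) = 0.
\end{cases}
\end{equation*}
In particular $u = u^*$ solves the case $f = 0$, so the difficulty is entirely perturbative.

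Next I would resolve the nonlocal boundary condition. On $A = \cup_\xi\{V(\xi)>0\}$ there is no equation, so the values of $w$ on $A$ are entirely fixed, through the matching relations $w(t,x) = w(t,x+\xi)$, by the values of $w$ at shifted points. Invoking the finite-chain bound established in the proof of Proposition \ref{weakqvi} — the integer $n^* \leq 2\|u^*\|_{L^\infty}/\inf k$ after which any chain of admissible jumps must exit $A$ — together with the separation property in the last two lines of (\ref{hypapp}), which selects a \emph{unique} active jump $\xi$ at each point of $A$, I would show that every such chain terminates in $B$ after at most $n^*$ steps. Consequently $w|_A$ is a well-defined linear functional of $w|_B$, and in particular the Dirichlet trace of $w$ on $\partial B$ is a (nonlocal) linear functional of the interior solution. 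The problem thus reduces to a heat equation in the time-dependent domain $B$ with a nonlocal Dirichlet condition on $\partial B$.

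Existence for this reduced problem is where the two structural assumptions on $B$ enter. In the first case ($B$ open with Lipschitz boundary and $B \cap (\{0\}\times\mathbb{T}^d)$ open Lipschitz) I would invoke the theory of \citep{calvo2017parabolic}, and in the monotone case ($B$ non-decreasing in time) that of \citep{gianazza1996abstract}, to obtain well-posedness and $H$-estimates for the pure Dirichlet heat problem in $B$ with prescribed boundary data. The nonlocal coupling is then closed by a fixed-point argument: the map sending a boundary trace to the corresponding heat solution and back to its induced trace (through the finite jump chains) is well-defined and, combined with energy estimates and the compactness coming from the domain theory, admits a fixed point; alternatively one passes to the limit in a decreasing approximating sequence, exactly as in Proposition \ref{weakqvi}.

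The role of the smallness $\|f\|_{L^\infty}\leq\epsilon$, and the main obstacle, is to guarantee that the solution $u = u^* + w$ stays close enough to $u^*$ in $L^\infty$ that the strict separation in (\ref{hypapp}) is preserved: the jump $\xi$ selected by $u^*$ remains the one attaining the minimum for $u$, and the inequality $u \leq M(k,u)$ is not violated off $A$. This is what keeps the chain resolution and the fixed-point map consistent, i.e. keeps the active set unchanged; and it is precisely the delicate interplay between this active-set stability and the low time-regularity available in the time-dependent domain — especially in the non-monotone Lipschitz case — that constitutes the crux of the argument.
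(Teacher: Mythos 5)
Your linearization $w := u - u^*$ is correct, and it is in fact sharper than you realize: by the coincidence relations in (\ref{hypapp}), the theorem's conclusion is \emph{exactly} your linear problem, with no reference to the operator $M$. Consequently, in your formulation the smallness $\|f\|_{L^{\infty}} \leq \epsilon$ plays no role whatsoever (consistent with the paper's remark that for a singleton $K$ no smallness is needed), and your final paragraph misidentifies the crux: the theorem never asks that $u \leq M(k,u)$ hold, nor that the jump selected by $V$ attain any minimum for $u$ --- you impose the $V$-relations exactly, so there is no ``active set'' to stabilize. Smallness matters only in the paper's approach, where the Dirichlet data on $A$ is taken to be $Mu$ (an infimum over \emph{all} jumps), and one must verify a posteriori --- using the strict separation in (\ref{hypapp}) and the fact that the fixed point is squeezed between barriers converging to $u^*$ as $\|f\|_{L^{\infty}} \to 0$ --- that this infimum is attained at the jump prescribed by $V$.

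The genuine gap is that the existence mechanism for your reduced problem is never supplied, and the route you sketch would break in two places. First, the map you describe (trace $\to$ heat solution in $B$ $\to$ induced trace) is affine, and affine maps obey no general fixed point theorem: ``energy estimates plus compactness'' produce nothing without an invariant bounded set, a Fredholm alternative would require uniqueness for the homogeneous nonlocal problem (precisely the delicate point --- the paper obtains uniqueness for the Dirichlet problem only under the extra hypothesis \ref{lasthyp}), and your ``decreasing approximating sequence'' needs monotonicity of the solution map together with an ordered pair of sub/supersolutions to start and confine the iteration. Constructing those barriers is the heart of the paper's proof: it solves two auxiliary QVIs with right-hand sides $\tilde f$ and $2(\partial_t u^* - \nu \Delta u^*) - \tilde f$, sets $v_2 := 2u^* - \tilde v_2$, and shows the order interval $\{v_1 \leq v \leq v_2\}$ is invariant under the monotone map $\mathcal{T}$; this ingredient is entirely absent from your argument. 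Second, and worse, your iteration need not be well defined: a strong solution here must lie in $H$ and coincide a.e.\ on $A$ with the prescribed data, so the data must be the restriction to $A$ of a function in $L^2((0,T),H^1(\mathbb{T}^d))$. The paper's data $Mu_n$ is a finite infimum of $H^1$ functions, hence globally $H^1$ in space --- this is exactly what using $M$ buys, and why the paper accepts paying for it with the smallness of $f$ at the end. Your data $\sum_{\xi \in K} \mathbb{1}_{A_{\xi}}\, w(\cdot + \xi)$ (or its chain-resolved version) is a measurable patching of distinct shifted copies of $w$ across the interfaces of the merely measurable sets $A_{\xi}$; such a patching is in general not $H^1$ in space, so the solvers of \citep{calvo2017parabolic} and \citep{gianazza1996abstract} do not apply to the iterates. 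For the true solution this compatibility is automatic, but for the approximating objects it is not, and that is where your construction fails.
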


\begin{Rem}
Because of the nature of the operator $M$, the result is resticted to small $f$. There are more regular $M$ for which this is not the case. For instance if $K = \{\xi\}$ is a singleton, then the previous result holds true for any $f\in L^{\infty}((0,T) \times \mathbb{T}^d)$.
\end{Rem}

\begin{proof}
We fix $f\in L^{\infty}((0,T)\times \mathbb{T}^d)$ and we define $\tilde{f} := \partial_t u^* - \nu \Delta u^* + f $ . In either one of the two cases (concerning the assumption on $B$), the following operator is well defined :
\[
\begin{aligned}
\mathcal{T} : &H \to H\\
&u \to \mathcal{T}(u)
\end{aligned}
\]
where $\mathcal{T}(u)$ is the only solution of 
\begin{equation*}
\begin{cases}
\partial_t \mathcal{T}(u) - \nu \Delta \mathcal{T}(u) =  \tilde{f} \text{ in } B;\\
\mathcal{T}(u) = Mu \text{ a.e. in } A ;\\
u(0) = u^*(0);
\end{cases}
\end{equation*}
where $Mu$ is defined by $$Mu(t,x) = \inf_{\xi \in K} k(t,x) + u(t,x + \xi).$$
For the case of a Lipschitz boundary we refer to \citep{calvo2017parabolic} and to \citep{gianazza1996abstract} for the case in which $B$ is increasing. Moreover the application $\mathcal{T}$ is monotone. Next we claim using classical results on QVI (see \citep{bensoussan1984impulse}) that there exists $v_1, \tilde{v}_2 \in H$, respective solutions of the two following $QVI$:
\[
\begin{aligned}
&\begin{cases}
v_1 \leq Mv_1; \\
\forall w \in H, w \leq Mv_1 :\\
\int_0^T \int_{\mathbb{T}^d}(\partial_t v_1 - \nu \Delta v_1 - \tilde{f}, w - v_1)_{H^{-1}\times H^1} \geq 0;\\
v_1(0) = u^*(0).
\end{cases}\\
&\begin{cases}
\tilde{v}_2 \leq M\tilde{v}_2; \\
\forall w \in H, w \leq M\tilde{v}_2 :\\
\int_0^T \int_{\mathbb{T}^d}(\partial_t \tilde{v}_2 - \nu \Delta \tilde{v}_2 + \tilde{f} - 2(\partial_t u^* - \nu \Delta u^*), w - \tilde{v}_2)_{H^{-1}\times H^1} \geq 0;\\
\tilde{v}_2(0) = u^*(0).
\end{cases}
\end{aligned}
\]
We define $v_2 = 2u^* - \tilde{v_2}$. Because $\mathcal{T}$ is monotone, the set $\mathcal{J} := \{ v \in H, v_1 \leq v \leq v_2\}$ is invariant by $\mathcal{T}$. Moreover this set is non-empty because $v_1 \leq v_2$. Thus $\mathcal{T}$ has a fixed point $u \in \mathcal{J}$. Now let us remark that as $||f||_{L^{\infty}}$ goes to $0$, $v_1$ and $v_2$ converges to $u^*$. Thus because of the assumption satisfied by $u^*$, namely the last line of (\ref{hypapp}), if $||f||_{L^{\infty}}$ is small enough, then because $u = Mu$ on $A$, $u$ satisfies 
\[
\forall \xi \in K, V(\xi,t,x) (k(x,\xi) + u(t, x+ \xi) - u(t,x)) = 0 \text{ a.e. in } (0,T) \times \mathbb{T}^d.
\]
\end{proof}

\subsection{Parabolic PDE in general time dependent domains}
In this section, we present a result for the heat equation on a time dependent domain with Dirichlet boundary conditions. We detail why under a rather general assumption on the evolution of the domain, we obtain a unique solution of the problem. We are interested in the following PDE :
\begin{equation}\label{tdpb}
\begin{cases}
\partial_t u - \nu \Delta u  = f \text{ in } B;\\
u = \phi \text{ in } A;\\
u(0) = \phi(0) \text{ in } \mathbb{T}^d.
\end{cases}
\end{equation}
In this PDE, $B \subset (0,T) \times \mathbb{T}^d$ and $A = \left( (0,T) \times \mathbb{T}^d \right) \setminus B$. We assume that :
\[
\phi \in H^1((0,T), H^1(\mathbb{T}^d)).
\]
Actual results of uniqueness concern situations in which the evolution of $B$ in time is either monotone, see \citep{gianazza1996abstract}, or smooth, see \citep{calvo2017parabolic}. We here make the following assumption :
\begin{hyp}\label{lasthyp}
The set $B$ is such that $µ = \partial_t \mathbb{1}_B$ is a measure whose positive and negative parts are strictly separated, i.e. there exists an open set $\mathcal{O}$ with $C^1$ boundary such that $supp(µ^-) \subset \mathcal{O}$ and $supp(µ^+) \cap \mathcal{O} = \emptyset$.
\end{hyp}
We plan our study as follows : proposition \ref{pexistenceapp} states the existence of weak solution of this problem ; lemma \ref{lemregapp} states a local regularity result needed for the uniqueness and theorem \ref{uniqueapp} is our main result of uniqueness.
\begin{Prop}\label{pexistenceapp}
There exists a weak solution $u \in L^2((0,T), H^1(\mathbb{T}^d))\cap L^{\infty}((0,T), L^2(\mathbb{T}^d))$ of (\ref{tdpb}).
\end{Prop}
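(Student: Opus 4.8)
The plan is to reduce (\ref{tdpb}) to a homogeneous problem and then to remove the constraint on $A$ by penalization, exactly in the spirit of the construction used for (\ref{onejump}) in Part 1. First I would set $w = u - \phi$; since $\phi \in H^1((0,T), H^1(\mathbb{T}^d))$ we have $\partial_t \phi \in L^2((0,T), H^1(\mathbb{T}^d))$ and $\nu\Delta\phi \in L^2((0,T), H^{-1}(\mathbb{T}^d))$, so that $\tilde{f} := f - \partial_t \phi + \nu\Delta\phi \in L^2((0,T), H^{-1}(\mathbb{T}^d))$. Finding a weak solution of (\ref{tdpb}) is then equivalent to finding $w \in L^2((0,T), H^1(\mathbb{T}^d))$ vanishing almost everywhere on $A$, with $w(0) = 0$, solving $\partial_t w - \nu\Delta w = \tilde{f}$ weakly in $B$. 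Let me stress that no regularity assumption on $B$ is needed for mere existence; hypothesis \ref{lasthyp} only enters for the uniqueness established in theorem \ref{uniqueapp}.

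To build $w$, I would introduce for $\eta > 0$ the penalized equation on the whole torus
\[
\partial_t w_\eta - \nu\Delta w_\eta + \frac{1}{\eta}\mathbb{1}_A w_\eta = \tilde{f} \text{ in } (0,T)\times\mathbb{T}^d, \qquad w_\eta(0) = 0.
\]
Because $\frac{1}{\eta}\mathbb{1}_A$ is a bounded nonnegative zeroth order coefficient, existence and uniqueness of $w_\eta \in H$ are classical (Lions' theory of parabolic equations, or a Galerkin scheme), and $\partial_t w_\eta \in L^2((0,T),H^{-1}(\mathbb{T}^d))$ so that $w_\eta \in C([0,T], L^2(\mathbb{T}^d))$ and the energy identity
\[
\frac{1}{2}\frac{d}{dt}\int_{\mathbb{T}^d} w_\eta^2 + \nu\int_{\mathbb{T}^d}|\nabla w_\eta|^2 + \frac{1}{\eta}\int_A w_\eta^2 = (\tilde{f}, w_\eta)_{H^{-1}\times H^1}
\]
holds. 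After a Young inequality on the right hand side and Gronwall's lemma this yields the bound
\[
\sup_{0\leq t\leq T}\|w_\eta(t)\|_{L^2}^2 + \nu\int_0^T\|\nabla w_\eta\|_{L^2}^2 + \frac{1}{\eta}\int_0^T\int_A w_\eta^2 \leq C\|\tilde{f}\|_{L^2(H^{-1})}^2,
\]
with $C$ independent of $\eta$. In particular $(w_\eta)_\eta$ is bounded in $L^2((0,T), H^1(\mathbb{T}^d))\cap L^\infty((0,T), L^2(\mathbb{T}^d))$ and $\int_0^T\int_A w_\eta^2 \leq C\eta$.

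I would then pass to the limit $\eta \to 0$. Up to a subsequence, $w_\eta \rightharpoonup w$ weakly in $L^2((0,T), H^1(\mathbb{T}^d))$ and weakly-$*$ in $L^\infty((0,T), L^2(\mathbb{T}^d))$; by weak lower semicontinuity the bound $\int_0^T\int_A w_\eta^2 \leq C\eta \to 0$ forces $\int_0^T\int_A w^2 = 0$, i.e. $w = 0$ almost everywhere on $A$. The decisive point is the choice of test functions: for $v \in H$ with $v = 0$ almost everywhere on $A$ and $v(T) = 0$, the penalization term $\frac{1}{\eta}\int_0^T\int_A w_\eta v$ vanishes identically, so the weak form of the penalized equation reduces to $\int_0^T(-\partial_t v - \nu\Delta v, w_\eta)_{H^{-1}\times H^1} = \int_0^T(\tilde{f}, v)_{H^{-1}\times H^1}$, an identity stable under the weak limit. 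Substituting $\tilde{f} = f - \partial_t\phi + \nu\Delta\phi$ and $w = u - \phi$ and integrating by parts in time, using $v(T) = 0$ which produces the boundary term $\int_{\mathbb{T}^d} v(0)\phi(0)$, recovers exactly the weak formulation of (\ref{tdpb}) for $u = w + \phi$. Finally $u \in L^\infty((0,T), L^2(\mathbb{T}^d))$, since $w$ inherits the uniform $L^\infty(L^2)$ bound and $\phi \in H^1((0,T), H^1(\mathbb{T}^d)) \subset C([0,T], H^1(\mathbb{T}^d))$.

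The part requiring the most care is this last reconciliation between the penalized weak form and the target weak formulation: one must check that the zeroth order penalization term really drops out against admissible test functions — it does, precisely because they vanish on $A$ — and that the space-time integration by parts reproduces the correct initial datum $\phi(0)$ rather than spurious boundary contributions. Everything else reduces to routine parabolic energy estimates, uniform in the penalization parameter.
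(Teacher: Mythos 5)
Your proof is correct and follows essentially the same route as the paper: the paper penalizes with the term $\frac{1}{\epsilon}\mathbb{1}_A(u_\epsilon - \phi)$ and multiplies the equation by $u_\epsilon - \phi$ to get the uniform energy estimate, which is exactly your argument after the change of variables $w = u - \phi$. Your write-up is merely more explicit about the passage to the limit (the penalization term vanishing against test functions supported off $A$, the identification $w = 0$ a.e.\ on $A$, and the recovery of the weak formulation with the correct initial datum), steps the paper compresses into ``it has a weak limit $u$ which satisfies all the required properties.''
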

\begin{proof}
For any $\epsilon > 0$, there exists a unique $u_{\epsilon} \in H$ solution of 
\[
\begin{cases}
\partial_t u_{\epsilon} - \nu \Delta u_{\epsilon} + \frac{1}{\epsilon} \mathbb{1}_{A}(u_{\epsilon} - \phi) = f \text{ in } (0,T) \times \mathbb{T}^d;\\
u_{\epsilon}(0) = \phi(0).
\end{cases}
\]
Multiplying this equation by $(u_{\epsilon} - \phi)$ and integrating in space we deduce that :
\[
\begin{aligned}
\frac{d}{dt}\frac{1}{2}\int_{\mathbb{T}^d} u_{\epsilon}^2(t) &+ \nu \int_{\mathbb{T}^d}|\nabla u_{\epsilon}|^2(t) + \frac{1}{\epsilon}\int_{A(t)}(u_{\epsilon} - \phi)^2(t) = - \frac{1}{2}\int_{\mathbb{T}^d} \phi(0)^2 \\
&+ \int_{\mathbb{T}^d} f (u_{\epsilon} - \phi)(t) + \nu \int_{\mathbb{T}^d} \nabla \phi \cdot \nabla u_{\epsilon}(t) + \int_{\mathbb{T}^d}\partial_t u_{\epsilon}\phi(t).
\end{aligned}
\]
Integrating between $0$ and $t$, we obtain :
\[
\begin{aligned}
&\frac{1}{2}\int_{\mathbb{T}^d}u_{\epsilon}^2(t) + \nu \int_0^t \int_{\mathbb{T}^d} |\nabla u_{\epsilon}|^2 + \frac{1}{\epsilon}\int_{\cup_{s \leq t} A(s)} (u_{\epsilon} - \phi)^2\\
& \leq \int_0^t \int_{\mathbb{T}^d} f (u_{\epsilon} - \phi)(t) + \nu \int_0^t\int_{\mathbb{T}^d} \nabla \phi \cdot \nabla u_{\epsilon} - \int_0^t \int_{\mathbb{T}^d} u_{\epsilon}\partial_t \phi + \int_{\mathbb{T}^d} \phi(t) u_{\epsilon}(t).
\end{aligned}
\]
Thus, the sequence $(u_{\epsilon})_{\epsilon}$ is bounded in $L^2((0,T), H^1(\mathbb{T}^d))\cap L^{\infty}((0,T), L^2(\mathbb{T}^d))$, hence it has a weak limit $u$ which satisfies all the required properties.
\end{proof}
We now pass to the proof of the lemma on local regularity.
\begin{Lemma}\label{lemregapp}
Let us define $u_{\epsilon}$ as in the proof of proposition \ref{pexistenceapp}. Assume that $u_{\epsilon} \in H$ is such that on an open set $(t_1,t_2) \times \Omega \subset (0,T) \times \mathbb{T}^d$ :
\[
\partial_t u_{\epsilon} - \nu \Delta u_{\epsilon} + \frac{1}{\epsilon} \lambda (u_{\epsilon} - \phi) = f;
\]
where $\lambda \in L^{\infty}((0,T) \times \mathbb{T}^d)$ is such that $ 0 \leq \lambda \leq 1$ and $\partial_t \lambda \leq 0$.

Then, for every open set $\Omega' $ such that $\overline{\Omega'}\subset \Omega$, there exists $C$ independent of $\epsilon$ such that 
\[
||\partial_t u_{\epsilon}||^2_{L^2} \leq C
\]
\end{Lemma}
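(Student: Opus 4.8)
The plan is to prove the bound by a localized energy estimate, testing the equation against (a time-regularization of) $\zeta^2\partial_t(u_\epsilon-\phi)$, where $\zeta$ is a smooth space-time cut-off with $\zeta\equiv 1$ on the subcylinder where the estimate is wanted and $\operatorname{supp}\zeta\subset(t_1,t_2)\times\Omega$. Setting $w:=u_\epsilon-\phi$ and recalling that $\phi\in H^1((0,T),H^1(\mathbb{T}^d))$, so that $\partial_t\phi\in L^2((0,T),H^1(\mathbb{T}^d))$, the identity $\partial_t u_\epsilon=\partial_t w+\partial_t\phi$ reduces the claim to a uniform bound on $\int_0^T\!\int_{\mathbb{T}^d}\zeta^2|\partial_t w|^2$, since $\partial_t\phi$ already lies in $L^2$. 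The only place where the hypothesis $\partial_t\lambda\le 0$ enters — and the heart of the argument — is the treatment of the penalization term.

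Concretely, multiplying the local equation by $\zeta^2\partial_t w$ and integrating, the zeroth-order term is rewritten as
\begin{equation*}
\frac1\epsilon\int_0^T\!\!\int_{\mathbb{T}^d}\zeta^2\lambda\,w\,\partial_t w=\frac{1}{2\epsilon}\int_0^T\!\!\int_{\mathbb{T}^d}\zeta^2\lambda\,\partial_t(w^2)=-\frac{1}{2\epsilon}\int_0^T\!\!\int_{\mathbb{T}^d}\partial_t(\zeta^2)\,\lambda\,w^2-\frac{1}{2\epsilon}\int_0^T\!\!\int_{\mathbb{T}^d}\zeta^2\,(\partial_t\lambda)\,w^2.
\end{equation*}
Because $\partial_t\lambda\le 0$ and $\zeta^2w^2\ge 0$, the last term is nonnegative and may simply be discarded; were $\partial_t\lambda$ allowed to be positive it would contribute an uncontrolled $O(1/\epsilon)$ quantity, which is exactly the obstruction removed by the present sign hypothesis. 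The remaining term $-\tfrac{1}{2\epsilon}\int\!\int\partial_t(\zeta^2)\lambda w^2$ is bounded by $\tfrac{1}{2\epsilon}\|\partial_t(\zeta^2)\|_{L^\infty}\int\!\int\lambda w^2$, and $\tfrac1\epsilon\int\!\int\lambda w^2$ is uniformly bounded by the penalization energy estimate obtained in the proof of Proposition \ref{pexistenceapp} (the coefficient $\lambda$ coinciding there with $\mathbb{1}_A$ on the cylinder).

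For the remaining terms one has $\int\!\int\zeta^2\partial_t u_\epsilon\,\partial_t w=\int\!\int\zeta^2|\partial_t w|^2+\int\!\int\zeta^2\partial_t\phi\,\partial_t w$; the diffusion term, after integrating by parts in space, becomes $\nu\int\!\int\nabla u_\epsilon\cdot\nabla(\zeta^2\partial_t w)$, whose leading part $\nu\int\!\int\zeta^2\nabla u_\epsilon\cdot\partial_t\nabla w$ is handled by writing $\nabla u_\epsilon=\nabla w+\nabla\phi$ and integrating by parts in time, producing only terms controlled by $\int\!\int|\nabla u_\epsilon|^2$ and by the $H^1((0,T),H^1)$-norm of $\phi$; the lower-order factor $2\nu\int\!\int\zeta\,\partial_t w\,\nabla\zeta\cdot\nabla u_\epsilon$ and the source term $\int\!\int\zeta^2 f\,\partial_t w$ are absorbed by Young's inequality (using $f\in L^\infty$ and the global energy bound on $u_\epsilon$ from Proposition \ref{pexistenceapp}). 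Collecting everything and absorbing the three $\tfrac14\int\!\int\zeta^2|\partial_t w|^2$ contributions into the left-hand side yields $\int\!\int\zeta^2|\partial_t w|^2\le C$ with $C$ independent of $\epsilon$, hence the asserted local bound on $\partial_t u_\epsilon$.

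The step I expect to be the main obstacle is the rigorous justification of this formal computation: $\partial_t u_\epsilon$ is a priori only in $L^2((0,T),H^{-1}(\mathbb{T}^d))$, so $\partial_t w$ is not an admissible test function, and $\lambda$ is merely $L^\infty$ with $\partial_t\lambda$ a measure, so the integration by parts in time producing $\int\zeta^2(\partial_t\lambda)w^2$ is not literally meaningful. Both difficulties are resolved by replacing $\partial_t w$ with a backward time difference quotient $h^{-1}\bigl(w(\cdot)-w(\cdot-h)\bigr)$: the monotonicity of $t\mapsto\lambda(t,x)$ equivalent to $\partial_t\lambda\le 0$ furnishes, at the discrete level, an inequality of the correct sign for the penalization term without ever pairing the measure $\partial_t\lambda$ against $w^2$, and letting $h\to 0$ after the absorption step produces the uniform $L^2$ bound (which a posteriori shows $\partial_t u_\epsilon\in L^2_{\mathrm{loc}}(L^2)$ on the subcylinder).
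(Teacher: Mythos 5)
Your proof is correct and follows essentially the same route as the paper's: a localized energy estimate obtained by testing against a cutoff times $\partial_t(u_\epsilon-\phi)$, with the sign hypothesis $\partial_t\lambda\le 0$ used to discard the only dangerous $O(1/\epsilon)$ term and the penalization energy bound from Proposition \ref{pexistenceapp} controlling the term involving the time derivative of the cutoff. Your additional remarks on rigor (replacing $\partial_t w$ by backward difference quotients to handle $\partial_t u_\epsilon\in L^2(H^{-1})$ and $\partial_t\lambda$ being only a measure) refine a point the paper's proof treats formally, but do not change the underlying argument.
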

\begin{proof}
We denote by $\zeta$ a positive $C^{\infty}$ function with compact support in $(t_1, t_2]\times \Omega$ such that $\zeta = 1$ on $\Omega'$. We multiply the equation satisfied by $u_{\epsilon}$ by $\zeta\partial_{t}(u_{\epsilon} - \phi)$ and we integrate. We obtain :
\[
\begin{aligned}
\int_{\Omega} \zeta \partial_t u_{\epsilon}^2 &+ \nu \frac{d}{dt} \frac{1}{2}\int_{\Omega}\zeta |\nabla u_{\epsilon}|^2 + \frac{d}{dt}\frac{1}{2\epsilon} \int_{\Omega}\lambda \zeta(u_{\epsilon} - \phi)^2 = \int_{\Omega}f\zeta\partial_t(u_{\epsilon} - \phi) + \int_{\Omega}\zeta\partial_t u_{\epsilon} \partial_t \phi \\
& - \nu \int_{\Omega}\zeta \Delta u_{\epsilon}\partial_t \phi + \frac{1}{2\epsilon} \int_{\Omega}\lambda (u_{\epsilon} - \phi)^2 \partial_t \zeta + \frac{1}{2\epsilon}\int_{\Omega}\zeta (u_{\epsilon} - \phi)^2 \partial_t \lambda + \nu \frac{1}{2} \int_{\Omega}|\nabla u_{\epsilon}|^2\partial_t \zeta.
\end{aligned}
\]
Integrating between $t_1$ and $s$ such that $t_1 \leq s \leq t_2$, we deduce that (because $\partial_t \lambda \leq 0$ and $\zeta(t_1) = 0$) :
\[
\begin{aligned}
\int_{t_1}^{s}\int_{\Omega'} \partial_t u_{\epsilon}^2 &+ \nu \frac{1}{2}\int_{\Omega'} |\nabla u_{\epsilon}|^2(s) + \frac{1}{2\epsilon} \int_{\Omega'}\lambda (u_{\epsilon} - \phi)^2(s) \leq \int_{t_1}^{s}\int_{\Omega}f\zeta\partial_t(u_{\epsilon} - \phi) + \int_{t_1}^{s}\int_{\Omega}\zeta\partial_t u_{\epsilon} \partial_t \phi \\
& - \nu \int_{t_1}^{s}\int_{\Omega}\zeta \Delta u_{\epsilon}\partial_t \phi + \frac{1}{2\epsilon} \int_{t_1}^{s}\int_{\Omega}\lambda (u_{\epsilon} - \phi)^2 \partial_t \zeta + \nu \frac{1}{2} \int_{t_1}^{s}\int_{\Omega}|\nabla u_{\epsilon}|^2\partial_t \zeta.
\end{aligned}
\]
Thus the required estimate follows.
\end{proof}
We can now pass to the main result of this section.
\begin{Theorem}\label{uniqueapp}
Assume hypothesis \ref{lasthyp} holds, then there exists a unique weak solution $u\in L^2((0,T), H^1(\mathbb{T}^d))$ of (\ref{tdpb}) such that for any set $\Omega \subset \mathcal{O}$ (where $\mathcal{O}$ is given by hypothesis \ref{lasthyp}), $\partial_t u \in L^2(\Omega)$.
\end{Theorem}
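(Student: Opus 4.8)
The plan is to prove uniqueness by an energy estimate for the difference of two solutions, the whole point being that the interface terms created by the motion of the domain carry a favourable sign once the positive and negative parts of $\mu:=\partial_t\mathbb{1}_B$ are separated as in hypothesis \ref{lasthyp}; existence in the class is then read off from Proposition \ref{pexistenceapp} together with the local regularity of Lemma \ref{lemregapp}. First I would reduce to the homogeneous problem: let $u_1,u_2$ be two weak solutions lying in the prescribed class and set $w:=u_1-u_2$. Then $w\in L^2((0,T),H^1(\mathbb{T}^d))$, $w=0$ on $A$, $w(0)=0$, $\partial_t w\in L^2(\Omega)$ for every $\Omega\subset\mathcal{O}$, and $w$ is a weak solution of $\partial_t w-\nu\Delta w=0$ in $B$. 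The goal is to show $\nabla w=0$ on $B$, from which $w\equiv0$ follows: on a.e.\ time slice $w(t)\in H^1(\mathbb{T}^d)$ is locally constant on the components of $B(t)$ and vanishes on $A(t)$, hence vanishes on every component meeting $A(t)$, the remaining full-torus slices being pinned to $0$ by $w(0)=0$ and continuity.

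The core is the following energy identity. Consider $E(t):=\tfrac12\int_{\mathbb{T}^d}\mathbb{1}_{B(t)}\,w(t)^2$. Differentiating in time, using $\partial_t w=\nu\Delta w$ on $B$ and integrating by parts in space (the lateral boundary term vanishes since $w=0$ on $A$, hence on $\partial B$), one formally gets
\[
\nu\int_0^T\int_{B}|\nabla w|^2 \;=\; -E(T)+\tfrac12\int w^2\,d\mu \;=\;-E(T)+\tfrac12\int w^2\,d\mu^+-\tfrac12\int w^2\,d\mu^-.
\]
Now $\int w^2\,d\mu^+=0$: on $\mathrm{supp}(\mu^+)$ the set $B$ is gaining the points it receives from $A$, where $w=0$, so the trace of $w$ against $\mu^+$ vanishes. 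The remaining term $-\tfrac12\int w^2\,d\mu^-\le0$ since $\mu^-\ge0$. Using $E(0)=0$ and $E(T)\ge0$, I conclude $\nu\int_B|\nabla w|^2\le0$, whence $\nabla w=0$ on $B$.

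The hard part will be making this identity rigorous, i.e.\ justifying the time integration by parts of $\int_B w\,\partial_t w$ and giving meaning to the traces of $w^2$ against the singular measure $\mu$ across the moving interface; this is exactly where hypothesis \ref{lasthyp} is used. By the strict separation there is a neighbourhood of the $C^1$ hypersurface $\partial\mathcal{O}$ on which $\mu=0$, so the domain is static there and $w$ is as regular as an interior solution of the heat equation; this lets me localise with a cutoff across $\partial\mathcal{O}$ without creating uncontrolled contributions. Inside $\mathcal{O}$, where $\mathrm{supp}(\mu^-)$ sits, the assumed regularity $\partial_t w\in L^2(\Omega)$ makes $w$ an $H$-function locally, so $\tfrac12\int_0^T\int_B\partial_t(w^2)$ is legitimate and the trace of $w^2$ against $\mu^-$ is well defined (and enters with the good sign). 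Outside $\overline{\mathcal{O}}$, where only $\mu^+\ge0$ lives and $B$ is non-decreasing, I am in the monotone (increasing domain) regime of \citep{gianazza1996abstract}: there the energy estimate can be closed by testing against the solution through a monotone approximation, and the only surviving interface term is the one on $\mathrm{supp}(\mu^+)$, already handled by $w=0$ on $A$. Carrying out this localisation rigorously—rather than the formal differentiation of $E$—is the real content of the argument.

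Finally, existence within the class follows from Proposition \ref{pexistenceapp}, which provides a weak solution as a limit of the penalized functions $u_\epsilon$. The bound of Lemma \ref{lemregapp} gives uniform control of $\partial_t u_\epsilon$ in $L^2_{loc}$ wherever $\partial_t\mathbb{1}_A\le0$, that is on the complement of $\mathcal{O}$, while near $\mathrm{supp}(\mu^-)$ the analogous localized estimate still closes: the only sign-indefinite term is $\tfrac1{2\epsilon}\int\zeta(u_\epsilon-\phi)^2\,\partial_t\mathbb{1}_A$, and the penalization forces $u_\epsilon-\phi$ to be of order $\epsilon$ on $A$, so this term stays bounded. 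Passing to the limit then yields a weak solution with $\partial_t u\in L^2(\Omega)$ for every $\Omega\subset\mathcal{O}$, which by the preceding argument is the unique such solution.
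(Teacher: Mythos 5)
Your strategy is genuinely different from the paper's: you run a direct energy estimate on the difference $w=u_1-u_2$, exploiting the sign of the interface terms generated by $\mu=\partial_t\mathbb{1}_B$, whereas the paper argues by duality. There, for each solution $u_i$ one introduces a \emph{backward} weak solution $v_i$ of $-\partial_t v_i-\nu\Delta v_i=u_i$ in $B$, $v_i=0$ on $A$, $v_i(T)=0$; Lemma \ref{lemregapp}, applied in the reversed time direction, makes the $v_i$ time-regular ($H^1$) exactly on the region complementary to the one where the $u_i$ are, and hypothesis \ref{lasthyp} guarantees that these two regions are separated by a $C^1$ hypersurface, so that space-time can be split into $\Omega_1$ and $\Omega_2=\overset{\circ}{\Omega_1^c}$ with $u_1,u_2\in H^1(\Omega_1)$ and $v_1,v_2\in H^1(\Omega_2)$. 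The weak formulations are then integrated by parts separately on each piece (with boundary terms on the common interface), and testing the $v_i$-equations with $u_1-u_2$ and the $u_i$-equations with $v_1-v_2$ gives $\int_0^T\int_{\mathbb{T}^d}(u_1-u_2)^2=0$. The entire point of this detour through the adjoint problem is to never pair a time derivative against a function that lacks time regularity on the region of integration.

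Your proposal pairs $\partial_t w$ against $w$ itself, and this is precisely where the genuine gap lies --- one you concede when you write that making the identity rigorous ``is the real content of the argument.'' The class of the theorem gives time regularity of $w$ on only one of the two regions determined by $\mathcal{O}$; on the other region $\int_B w\,\partial_t w$ is undefined, and the trace of $w^2$ against the part of $\mu$ supported there is exactly where weak solutions can jump in time, so it cannot be given meaning by a soft limiting argument. Moreover your sketch for closing this contains two incorrect claims. First, hypothesis \ref{lasthyp} does not provide a neighbourhood of $\partial\mathcal{O}$ on which $\mu=0$: compactness does place $\mathrm{supp}(\mu^-)$ at positive distance from $\partial\mathcal{O}$, but $\mathrm{supp}(\mu^+)$ is only required to be disjoint from the open set $\mathcal{O}$ and may perfectly well charge $\partial\mathcal{O}$ itself, so the domain need not be static near the cutoff. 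Second, in the existence part, the penalization does not force $u_\epsilon-\phi$ to be of order $\epsilon$ on $A$: the energy estimate in Proposition \ref{pexistenceapp} yields only $\frac{1}{\epsilon}\int_A(u_\epsilon-\phi)^2\le C$, i.e.\ an $O(\sqrt{\epsilon})$ bound in $L^2(A)$ with respect to Lebesgue measure, which gives no control whatsoever of $(u_\epsilon-\phi)^2$ against the singular measure $\partial_t\mathbb{1}_A$, supported on a Lebesgue-null set (and the pointwise relation $u_\epsilon-\phi=\epsilon(f-\partial_t u_\epsilon+\nu\Delta u_\epsilon)$ on $A$ is circular, since it presupposes the very bound on $\partial_t u_\epsilon$ being proved). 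Indeed no such estimate can hold: across the portion of the interface where the domain shrinks, the limit solution genuinely jumps in time, which is exactly why the regularity in the theorem is only local to one side of $\partial\mathcal{O}$ and why hypothesis \ref{lasthyp} is needed at all. To complete your program you would have to import the paper's mechanism (or an equivalent one): introduce dual backward solutions carrying the time regularity on the region where $w$ has none, and always place the time derivative on whichever function is regular there.
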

\begin{proof}
A weak solution $u$ of (\ref{tdpb}) exists by proposition \ref{pexistenceapp}. By lemma \ref{lemregapp}, it has the required time regularity. Let us assume that there exists two such weak solutions $u_1$ and $u_2$. We then define for $i = 1, 2$ $v_i \in L^2((0,T), H^1(\mathbb{T}^d))$, a weak solution of respectively 
\[
\begin{cases}
-\partial_t v_i - \nu \Delta v_i  = u_i \text{ in } B;\\
v_i = 0 \text{ in } A;\\
v_i(T) = 0 \text{ in } \mathbb{T}^d.
\end{cases}
\]
Using lemma \ref{lemregapp} (in the other time direction), we can assume that there exists an open set $\Omega_1 \subset (0,T) \times \mathbb{T}^d$ with smooth boundary such that 
\[
\begin{cases}
u_1, u_2 \in H^1(\Omega_1);\\
v_1, v_2 \in H^1(\Omega_2);
\end{cases}
\]
where $\Omega_2 = \overset{\circ}{\Omega_1^c}$. The precise weak formulations satisfied by $u_1, u_2, v_1$ and $v_2$ can now be written. Namely, for any $w \in L^2((0,T), H^1(\mathbb{T}^d))$ such that $w \in H^1(\Omega^1)$ and $w = 0$ almost everywhere on $A$, the following holds for $i =1,2$ :
\[
\int_{\Omega_1} v_i\partial_t w - \int_{\Omega_2} w\partial_tv_i+ \nu \int_0^T \int_{\mathbb{T}^d}\nabla w \cdot \nabla v_i = \int_0^T\int_{\mathbb{T}^d} u_i w + \int_{\partial \Omega_1}wv_i\eta^1_t;
\]
where $\eta^1 = (\eta^1_t, \eta^1_x)$ is the exterior unit normal vector to $\Omega_1$. Conversely, for any $w \in L^2((0,T), H^1(\mathbb{T}^d))$ such that $w \in H^1(\Omega^2)$ and $w = 0$ almost everywhere on $A$, the following holds for $i =1,2$
\[
-\int_{\Omega_2} u_i\partial_t w + \int_{\Omega_1} w\partial_tu_i+ \nu \int_0^T \int_{\mathbb{T}^d}\nabla w \cdot \nabla u_i = \int_0^T\int_{\mathbb{T}^d} fw + \int_{\partial \Omega_2}wu_i\eta^2_t;
\]
where $\eta^2 = (\eta^2_t, \eta^2_x)$ is the exterior unit normal vector to $\Omega_2$.Thus we deduce, using this relation for $w = u_1 - u_2$ on $v_1$ and $v_2$ ; and for $w = v_1 - v_2$ on $u_1$ and $u_2$, that
\[
\int_0^T\int_{\mathbb{T}^d} (u_1 - u_2)^2 = 0.
\]
Thus there exists a unique weak solution which has the time regularity required.
\end{proof}

\end{document}